\numberwithin{equation}{section}
\newcommand{\I}{{\mathbf I}}
\newcommand{\J}{{\mathbf J}}
\def\R{{\mathbb R} }
\theoremstyle{plain}
\newtheorem{thm}{Theorem}[section]
\newtheorem{lem}{Lemma}[section]
\newtheorem{cor}{Corollary}[section]
\newtheorem{theorem}{Theorem}
\theoremstyle{definition}
\newtheorem{rem}{Remark}
\newtheorem{rmk}[theorem]{Remark}
\begin{document}

\title{\vskip-0.2in Pointwise Bounds and Blow-up for Systems of Semilinear Elliptic
  Inequalities at an Isolated Singularity via Nonlinear Potential Estimates}

\author{Marius Ghergu\footnote{School of Mathematical Sciences,
    University College Dublin, Belfield, Dublin 4, Ireland; {\tt
      marius.ghergu@ucd.ie}}
\footnote{Supported in part by the IRC Ulysses Grant 41942}, 
Steven D.~Taliaferro\footnote{Mathematics Department, Texas A\&M
    University, College Station, TX 77843-3368; {\tt stalia@math.tamu.edu}} 
\footnote{Corresponding author, Phone 001-979-845-3261, Fax
  001-979-845-6028},
Igor E.~Verbitsky\footnote{Department of Mathematics, University of
  Missouri, Columbia, MO 65211; {\tt verbitskyi@missouri.edu}}
\footnote{Supported in part by NSF grant DMS-1161622}}

\date{}
\maketitle


\begin{abstract}
We study the behavior near the origin of $C^2$ positive solutions $u(x)$ and $v(x)$ of the system 
$$
\begin{aligned}
0\leq -\Delta u\leq f(v)\\
0\leq -\Delta v\leq g(u)
\end{aligned}
\quad \mbox{ in }B_1(0)\setminus\{0\}\subset \R^n, n\geq 2,
$$
where $f,g:(0,\infty)\to (0,\infty)$ are continuous functions. We
provide optimal conditions on $f$ and $g$ at $\infty$ such that
solutions of this system satisfy pointwise bounds near the
origin. In dimension $n=2$ we show that this property holds if $\log^+
f$ or $\log^+g$ grow at most linearly at infinity. In dimension $n\geq
3$ and under the assumption $f(t)=O(t^\lambda)$, $g(t)=O(t^\sigma)$ as
$t\to \infty$, ($\lambda, \sigma\geq 0$), we obtain a new critical
curve that optimally describes the existence of such pointwise bounds.
Our approach relies in part on sharp estimates of nonlinear 
potentials which appear naturally in this context.
\medskip

\noindent {\it Keywords}: Pointwise bound; Semilinear elliptic system; Isolated
singularity; Nonlinear potential estimate. 
\end{abstract}

\tableofcontents

\section{Introduction}\label{sec1}
In this paper we study the behaviour near the origin of $C^2$ positive
solutions $u(x)$ and $v(x)$ of the system
\begin{equation}\label{eq1.1}
\begin{aligned}
  0\leq -\Delta u\leq f(v)  \\  
 0\leq -\Delta v\leq g(u)
\end{aligned} \qquad \mbox{ in }B_1(0)\setminus\{0\}\subset \mathbb{R}^n, n\geq 2,
\end{equation}
where $f,g:(0,\infty)\to (0,\infty)$ are continuous functions.
More precisely, we consider the following question.
\medskip

\noindent {\bf Question 1}. For which continuous functions
$f,g:(0,\infty)\to(0,\infty)$ do there exist continuous functions $h_1
,h_2 :(0,1)\to(0,\infty)$ such that all $C^2$ positive solutions
$u(x)$ and $v(x)$ of the system \eqref{eq1.1}
satisfy
\begin{equation}\label{eq1.3}
 u(x)=O(h_1 (|x|)) \quad \text{ as } x\to 0
\end{equation}
\begin{equation}\label{eq1.4}
 v(x)=O(h_2 (|x|)) \quad \text{ as }x\to 0
\end{equation}
and what are the optimal such $h_1$ and $h_2$ when they exist?
\medskip

We call a function $h_1$ (resp. $h_2$) with the above properties a
pointwise bound for $u$ (resp. $v$) as $x\to0$.

Question 1 is motivated by the results on the single semilinear inequality  
$$
0\leq -\Delta u\leq f(u) \quad\mbox{ in }B_1(0)\setminus\{0\}\subset \mathbb{R}^n, n\geq 2,
$$
and its higher order version
$$
0\leq -\Delta^m u\leq f(u) \quad\mbox{ in }B_1(0)\setminus\{0\}\subset \mathbb{R}^n, m\geq 1, n\geq 2,
$$
which are discussed in \cite{T2001,T2006,T2013}.

Although the  literature on semilinear elliptic systems is
quite extensive, very little of it deals with semilinear
inequalities.  We mention the work of Bidaut-V\'eron and
Grillot \cite{BVG1999} in which the following coupled inequalities are
studied:
\begin{equation}\label{bvg1}
\begin{aligned}
  0\leq \Delta u\leq |x|^a v^p  \\  
 0\leq \Delta v\leq |x|^b u^q 
\end{aligned} \qquad \mbox{ in }B_1(0)\setminus\{0\}\subset
\mathbb{R}^n, \ n\geq 3,\ pq<1, 
\end{equation}
and
\begin{equation}\label{bvg2}
\begin{aligned}
  \Delta u\geq |x|^a v^p  \\  
 \Delta v\geq |x|^b u^q 
\end{aligned} \qquad \mbox{ in }B_1(0)\setminus\{0\}\subset
\mathbb{R}^n, \ n\geq 3,\ pq>1.
\end{equation}
 Another related system of semilinear elliptic inequalities appears in \cite{BVP2001} (see also \cite{MP2001}) and contains as a particular case the model
\begin{equation}\label{bvp}
\begin{aligned}
 - \Delta u\geq |x|^a v^p  \\  
 -\Delta v\geq |x|^b u^q 
\end{aligned} \qquad \mbox{ in } \mathbb{R}^n\setminus\{0\}, n\geq 2.
\end{equation}
Our system (1.1) is different in nature from \eqref{bvg1},
\eqref{bvg2} and \eqref{bvp} and its investigation completes the
general picture of semilinear elliptic systems of inequalities. In
particular (see Theorem 3.7), we will obtain pointwise bounds
for positive solutions of the system
 $$
\begin{aligned}
 0\leq - \Delta u\leq |x|^{a} v^p  \\  
 0\leq -\Delta v\leq |x|^{b} u^q 
\end{aligned} \qquad \mbox{ in } B_1(0)\setminus \{0\}, n\geq 3
$$
which complement the studies of systems \eqref{bvg1}, \eqref{bvg2} or
\eqref{bvp}.

\begin{rem}\label{rem1}
 Let
 \begin{equation}\label{eq1.5}
  \Gamma(r)=
  \begin{cases}
   r^{-(n-2)} , &\text{if }n\geq 3\\
   \log \frac{2}{r}, &\text{if }n=2.
  \end{cases}
 \end{equation}
Since $\Gamma(|x|)$ is positive and harmonic in $B_1
(0)\backslash \{0\}$, the functions $u_0 (x)=v_0
(x)=\Gamma(|x|)$ are always positive solutions of \eqref{eq1.1}.
Hence, any pointwise bound for positive solutions of \eqref{eq1.1} must
be at least as large as $\Gamma$ and whenever $\Gamma$ is such a
bound for $u$ (resp. $v$) it is necessarily optimal. In this case we
say that $u$ (resp. $v$) is {\it harmonically bounded} at $0$.
\end{rem}

We shall see that whenever a pointwise bound for positive solutions of
\eqref{eq1.1} exists, then $u$ or $v$ (or both) are harmonically bounded
at 0.

Our results reveal the fact that the optimal conditions for the
existence of pointwise bounds for positive solutions of \eqref{eq1.1}
are related to the growth at infinity of the nonlinearities $f$ and
$g$. In dimension $n=2$ we prove that pointwise bounds exist if
$\log^+f$ or $\log^+g$ grow at most linearly at infinity (see Theorems
\ref{thm2.1}, \ref{thm2.2} and \ref{thm2.3}). In dimensions
$n\geq 3$ we will assume that $f$ and $g$ have a power type growth at
infinity, namely
$$
f(t)=O(t^\lambda)\quad\mbox{ as }t\to\infty
$$
$$
g(t)=O(t^\sigma)\quad\mbox{ as }t\to\infty
$$
with $\lambda, \sigma\geq 0$. In this setting, we will find (see
Theorem \ref{thm3.4}) that no pointwise bounds exist if the pair
$(\lambda,\sigma)$ lies above the curve
\begin{equation}\label{curve}
\sigma=\frac{2}{n-2}+\frac{n}{n-2}\frac{1}{\lambda}.
\end{equation}
On the other hand, if $(\lambda,\sigma)$ lies below the curve
\eqref{curve} then pointwise bounds for positive solutions of
\eqref{eq1.1} always exist and their optimal estimates depend on new
subregions in the $\lambda\sigma$ plane (see Theorems \ref{thm3.1},
\ref{thm3.2}, \ref{thm3.3} and \ref{thm3.4}).

We note that the curve \eqref{curve} lies below the Sobolev hyperbola 
$$
\frac{1}{\sigma+1}+\frac{1}{\lambda+1}=1-\frac{2}{n}, 
\mbox{ that is, } \sigma=\frac{2\lambda+n+2}{(n-2)\lambda-2}
$$
which separates the regions of existence and nonexistence for
Lane-Emden systems:
$$
\begin{aligned}
  - \Delta u= v^\lambda  \\  
 -\Delta v=u^\sigma
\end{aligned} \qquad \mbox{ in } B_1(0)\subset\mathbb{R}^n, n\geq 3,
$$
(see \cite{M1993,M1996, PQS2007, S2009}).
\medskip

Our analysis of \eqref{eq1.1} combines the Brezis-Lions 
representation formula for superharmonic functions (see Appendix
\ref{secA}), a Moser type interation (see Lemma \ref{lem4.6}), and
certain pointwise estimates (see Corollary \ref{cor1})
for the nonlinear potential $N((Ng)^\sigma)$, $ \sigma \ge
\frac{2}{n-2}$, where $N$ is the Newtonian potential operator over a
ball in $\mathbb{R}^n$, $n\ge 3$, and $g$ is a nonnegative bounded
function.

Section \ref{nonlin} in this work is concerned with various pointwise and
integral estimates of nonlinear potentials of Havin-Maz'ya type and
their connections with Wolff potentials. 
Since the results in this section may be of independent interest, we
state them in greater generality than is needed for our study of the
system \eqref{eq1.1}.

In any dimension $n\geq 2$, we prove that our pointwise bounds for
positive solutions of \eqref{eq1.1} are optimal.  When these bounds
are not given by $\Gamma$, their optimality follows by constructing
(with the help of Lemma \ref{lem4.1}) solutions $u$ and $v$ of
\eqref{eq1.1} satisfying suitable coupled conditions on the union of a
countable number of balls which cluster at the origin and are harmonic
outside these balls.  In this case, it is interesting to point out
that although our optimal pointwise bounds are radially symmetric
functions, these bounds are not achieved by radial solutions of
\eqref{eq1.1}, because nonnegative radial superharmonic functions in a
punctured neighborhood of the origin are harmonically bounded as $x\to
0$.

We also consider the following analog of Question 1 when the
singularity is at $\infty$ instead of at the origin.
\medskip

\noindent {\bf Question 2}. For which continuous functions
$f,g:(0,\infty)\to(0,\infty)$ do there exist continuous functions $h_1
,h_2 :(1,\infty)\to(0,\infty)$ such that all $C^2$ positive solutions
$u(x)$ and $v(x)$ of the system
\begin{equation}\label{eq1.6}
 \begin{split} 
 0\leq -\Delta u\leq f(v)\\
 0\leq -\Delta v\leq g(u)
 \end{split}\quad
 \text{in }\mathbb{R}^n \backslash B_1 (0), \, n\geq 2,
 \end{equation}
 satisfy
\begin{equation}\label{eq1.8}
 u(x)=O(h_1 (|x|))\quad \text{ as }|x|\to\infty
\end{equation}
\begin{equation}\label{eq1.9}
 v(x)=O(h_2 (|x|)) \quad \text{ as }|x|\to\infty
\end{equation}
and what are the optimal such $h_1$ and $h_2$ when they exist?
\medskip

This paper is organized as follows. In Sections \ref{sec2} and
\ref{sec3} we state our main results in dimensions $n=2$ and $n\geq 3$
respectively.  In Section \ref{nonlin} we obtain, using Hedberg
inequalities and Wolff potential estimates, some new pointwise and
integral bounds for nonlinear potentials of Havin-Maz'ya type.  Using
these estimates, we collect in Section \ref{sec4} some preliminary
lemmas while Sections \ref{sec5} and \ref{sec6} contain the proofs of
our main results.

\section{Statement of two dimensional results}\label{sec2}

In this section we state our results for Questions 1 and 2 when $n=2$.

We say a continuous function $f:(0,\infty)\to(0,\infty)$ is
{\it exponentially bounded} at $\infty$ if
\begin{equation*}
 \log^{+} f(t)=O(t)\quad \text{ as }t\to\infty
\end{equation*}
where
\begin{equation*}
 \log^{+}s:=
 \begin{cases}
  \log s, &\text{if } s> 1\\
  0, &\text{if }s\leq 1.
 \end{cases}
\end{equation*}
If $f,g:(0,\infty)\to(0,\infty)$ are continuous functions then either
\begin{enumerate}
 \item $f$ and $g$ are both exponentially bounded at $\infty$;
 \item neither $f$ nor $g$ is exponentially bounded at $\infty$; or
 \item one and only one of the functions $f$ and $g$ is exponentially bounded at $\infty$.
\end{enumerate}
Our result for Question 1 when $n=2$ and $f$ and $g$ satisfy
(i) (resp. (ii), (iii)) is Theorem \ref{thm2.1} (resp. \ref{thm2.2},
\ref{thm2.3}) below.

By the following theorem, if the functions $f$ and $g$ are both
exponentially bounded at $\infty$ then all positive solutions $u$ and
$v$ of the system \eqref{eq1.1} are harmonically bounded at $0$.

\begin{thm}\label{thm2.1}
 Suppose $u(x)$ and $v(x)$ are $C^2$ positive solutions of the system
 \begin{equation}\label{eq2.1}
  0\leq -\Delta u\leq f(v)
 \end{equation}
 \begin{equation}\label{eq2.2}
   0\leq -\Delta v\leq g(u)
 \end{equation}
 in a punctured neighborhood of the origin in $\mathbb{R}^2$, where
 $f,g:(0,\infty)\to(0,\infty)$ are continuous and exponentially
 bounded at $\infty$.  Then both $u$ and $v$ are harmonically bounded,
 that is 
 \begin{equation}\label{eq2.3}
  u(x)=O\left( \log \frac{1}{|x|} \right) \quad \text{as }x\to0
 \end{equation}
 and
 \begin{equation}\label{eq2.4}
  v(x)=O\left(\log \frac{1}{|x|}\right) \quad \text{as }x\to0.
 \end{equation}
\end{thm}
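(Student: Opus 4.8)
The plan is to bootstrap from the one-sided bounds $-\Delta u \geq 0$ and $-\Delta v \geq 0$ by invoking the Brezis--Lions representation formula for superharmonic functions in a punctured ball (Appendix A). This says that near the origin $u$ and $v$ each decompose as a nonnegative Newtonian potential of its (integrable) Laplacian mass, plus a multiple of the fundamental solution $\Gamma(|x|) = \log(1/|x|)$ (plus a bounded harmonic piece). Concretely, after shrinking the ball we may write $u = N(-\Delta u) + c_1 \Gamma + O(1)$ and $v = N(-\Delta v) + c_2 \Gamma + O(1)$ with $-\Delta u, -\Delta v \in L^1$. The Newtonian potential of an $L^1$ density in $\mathbb{R}^2$ is, a priori, only finite a.e., so the whole game is to control its pointwise growth, and for that we need to control the $L^1$-norms of $-\Delta u$ and $-\Delta v$ on small annuli and feed the second inequality in \eqref{eq2.1}--\eqref{eq2.2} back in.

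First I would extract integral bounds: since $0 \le -\Delta u \le f(v)$ and $0 \le -\Delta v \le g(u)$, and since $u, v$ are superharmonic hence bounded below away from $0$, a standard integration-by-parts / test-function argument against a cutoff of $\Gamma$ shows $\int_{B_r} (-\Delta u) \, \Gamma \, dx$ and $\int_{B_r}(-\Delta v)\, \Gamma\, dx$ are finite, which already forces $u(x) = o(\Gamma(|x|))$-type control on spherical averages and, more usefully, gives $\int |x|^{-1}$-weighted integrability of the masses. The key quantitative step is a Moser-type iteration (the analogue of Lemma \ref{lem4.6}): starting from the crude fact that $u$ is superharmonic and the potential representation, one shows that if $v$ has a pointwise bound of the form $v(x) = O(\Gamma(|x|)^{a})$ for some exponent $a$, then $f(v) \le \exp(O(v)) = \exp(O(\Gamma^a))$, so $-\Delta u \le \exp(O(\Gamma^a))$; plugging this into the Newtonian potential and using the sharp estimate for $N(\exp(c\,\Gamma))$ in two dimensions improves the bound on $u$, and symmetrically for $v$. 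Because $\log^+ f$ and $\log^+ g$ are both \emph{linearly} bounded (hypothesis (i)), the exponentials that arise are of the form $|x|^{-C}$ for a fixed $C$, and the Newtonian potential $N(|x|^{-C})$ over a two-dimensional ball is again $O(|x|^{-C+2})$ when $C<2$ and $O(\log(1/|x|))$ when the exponent is right at the borderline; iterating drives both $u$ and $v$ down to the harmonic bound $O(\log(1/|x|))$.

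The main obstacle is making the iteration converge rather than merely improve by a fixed amount, and in particular handling the endpoint. The subtlety is that a single application of "$v = O(\Gamma^a) \Rightarrow u = O(\Gamma^{a'})$" with $a' < a$ only gives geometric decay of the exponents if the map $a \mapsto a'$ is a genuine contraction toward the fixed point $a=1$; near $a=1$ one expects only $a' = 1 + \theta(a-1) + \cdots$ and must verify $\theta<1$, which is exactly where the \emph{linear} (not superlinear) growth of $\log^+ f, \log^+ g$ is used — linear growth makes the composed nonlinearity behave, on the logarithmic scale, like multiplication by a bounded factor, so two successive steps strictly contract. I would therefore set $w = \log u$, $z = \log v$ on the region where $u,v > 1$, translate the potential estimates into statements about $w,z$, run the iteration there, and only at the end convert back, using the Brezis--Lions formula once more to absorb the leftover $O(1)$ and the fundamental-solution coefficient into the clean bounds \eqref{eq2.3}--\eqref{eq2.4}. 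A secondary technical point is justifying all the integrations by parts in the punctured ball despite the singularity at $0$, which is handled by the integrability built into the Brezis--Lions representation (the measure $-\Delta u$ has finite mass near $0$) together with a routine cutoff/limiting argument.
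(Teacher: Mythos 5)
Your proposed scheme has a genuine gap at its very first step: the iteration has no valid starting point. Positive superharmonic functions in a punctured disk are \emph{not} a priori $O(\Gamma(|x|)^a)$ for any $a$; all that Lemma \ref{A.1} gives is that $-\Delta u$, $-\Delta v$ have finite mass near $0$, and the logarithmic potential of a finite mass can exceed any prescribed rate along a sequence tending to $0$ (this is exactly what Lemma \ref{lem4.1} and Theorem \ref{thm2.2} exploit to build counterexamples). So the hypothesis ``$v=O(\Gamma^a)$ for some $a$'' that launches your contraction in the exponent is never available, and the circularity you are trying to break ($u$ controls $-\Delta v$, $v$ controls $-\Delta u$) remains unbroken. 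Even granting a starting bound, the improvement step as stated is problematic: for $a>1$ the majorant $\exp(O(\Gamma^a))$ is not locally integrable near $0$, so one cannot simply ``plug it into the Newtonian potential''; and your preliminary claim that $\int(-\Delta u)\,\Gamma\,dx<\infty$ does not follow from superharmonicity (only the unweighted $L^1$ bound does -- mass spread over annuli $|y|\sim 2^{-k}$ with weights $1/k^2$ gives a counterexample).

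What is missing is the actual mechanism of the paper's proof (Theorem \ref{thm5.1}): argue by contradiction along a sequence $x_j\to 0$ with $v(x_j)/\log\frac{1}{|x_j|}\to\infty$, rescale to the balls $B_{|x_j|/4}(x_j)$, and use that the \emph{local} masses $\int_{|y-x_j|<|x_j|/2}(-\Delta u)\,dy$ and $\int_{|y-x_j|<|x_j|/2}(-\Delta v)\,dy$ tend to $0$ (a consequence of the global $L^1$ bound). The exponential coupling $0\le-\Delta v\le e^{Ku}$ is then handled not by a pointwise bound on $u$ but by the Brezis--Merle Jensen's-inequality trick \cite{BM}: since $u\le M_j+$ (potential of a mass tending to $0$), the rescaled density $g_j$ lies in $L^{p_j}$ with $p_j\to\infty$ on the set where the potential part exceeds $M_j$, and an explicit rearrangement argument rules out the required concentration of log-weighted mass, proving $v=O(\log\frac{1}{|x|})$; only \emph{after} that does one use $\log^+ f(t)=O(t)$ pointwise, via Lemma \ref{lem4.3}, to bound $u$ in a single step (no contraction of exponents is needed or used). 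Without an ingredient playing the role of this exponential-integrability argument, your outline does not yield Theorem \ref{thm2.1}.
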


By Remark \ref{rem1}, the bounds \eqref{eq2.3} and \eqref{eq2.4} are optimal.

By the following theorem, it is {\it essentially} the case that
if neither of the functions $f$ and $g$ is exponentially bounded at
$\infty$ then neither of the positive solutions $u$ and $v$ of the
system \eqref{eq1.1} satisfies an apriori pointwise bound at $0$.

\begin{thm}\label{thm2.2}
Suppose $f,g:(0,\infty)\to(0,\infty)$ are continuous functions satisfying
\begin{equation}\label{eq2.5}
 \lim_{t\to\infty} \frac{\log f(t)}{t}=\infty \quad\text{ and }\quad\lim_{t\to\infty} \frac{\log g(t)}{t}=\infty.
\end{equation}
Let $h:(0,1)\to(0,\infty)$ be a continuous function satisfying
$\lim_{r\to0^{+}} h(r)=\infty$.  Then there exist $C^2$ positive
solutions $u(x)$ and $v(x)$ of the system (\ref{eq2.1}, \ref{eq2.2}) in $B_1
(0)\backslash \{0\} \subset \mathbb{R}^2$ such that
\begin{equation}\label{eq2.6}
 u(x)\neq O(h(|x|)) \quad \text{as }x\to0
\end{equation}
and
\begin{equation}\label{eq2.7}
 v(x)\neq O(h(|x|)) \quad \text{as }x\to 0.
\end{equation}
\end{thm}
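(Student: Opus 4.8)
The plan is to construct $u$ and $v$ explicitly as superpositions of small fundamental-solution-type bumps placed at a sequence of points $x_j \to 0$, with the bumps in $u$ and $v$ located at \emph{different} sequences so that the two equations decouple enough to be solved. Fix a sequence $x_j = (2^{-j},0,\dots)$ (or any sequence in $B_1(0)\setminus\{0\}$ tending to the origin), and choose a second interlaced sequence $y_j \to 0$ with the balls $B(x_j, 4r_j)$ and $B(y_j, 4r_j)$ all pairwise disjoint and contained in $B_1(0)\setminus\{0\}$, where $r_j \downarrow 0$ is chosen small. On each ball $B(x_j, r_j)$ we will force $u$ to have a sharp spike of height $M_j \to \infty$ chosen so large that $u(x_j) \neq O(h(|x_j|))$ forces a violation; symmetrically $v$ will spike on $B(y_j, r_j)$. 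Away from all these balls, $u$ and $v$ are harmonic (in fact we take them to be a fixed positive harmonic function, e.g. a constant, plus the sum of the bumps).

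The key step is Lemma \ref{lem4.1} (which we are allowed to invoke), producing, for prescribed data on a small ball, a $C^2$ function which is superharmonic, has controlled sup norm outside the ball, attains a large prescribed value at the center, and has $-\Delta$ supported in (a concentric subball of) that ball with a controlled $L^1$ or sup bound on $-\Delta$ of the bump. Using this, build $u = c + \sum_j \phi_j$ where $\phi_j$ is the bump on $B(x_j,r_j)$ with $\phi_j(x_j) = M_j$, and $v = c + \sum_j \psi_j$ where $\psi_j$ is the bump on $B(y_j,r_j)$ with $\psi_j(y_j) = M_j$. Because the supports of $-\Delta u$ and of $-\Delta v$ are disjoint unions of the $B(x_j,r_j)$'s and the $B(y_j,r_j)$'s respectively, on the support of $-\Delta u$ we have $v$ close to a fixed bounded value (we arrange $\psi_j$ to be uniformly small, say $\le 1$, outside $B(y_j,r_j)$, and the $x_j$-balls avoid the $y_j$-balls), so $f(v) \le F := \max\{f(t): 0 < t \le \text{const}\} < \infty$ there. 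Hence it suffices that $0 \le -\Delta u \le F$ on $B(x_j,r_j)$, which is arranged by making the bump sufficiently spread out (choosing $r_j$ not too small relative to $M_j$). Symmetrically $0 \le -\Delta v \le g(u) $: on $B(y_j,r_j)$, $u$ is bounded by a fixed constant, so $g(u)$ is bounded below away from $0$ — wait, we need $-\Delta v \le g(u)$, and $g(u)$ is then bounded \emph{above} by $G := \max\{g(t): 0< t \le \text{const}\}$, so we need $-\Delta v \le G$ on $B(y_j,r_j)$, again arranged by spreading out the bump. Off all balls both $-\Delta u$ and $-\Delta v$ vanish, and the inequalities hold trivially since $f,g>0$.

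The main obstacle — and the only place the hypothesis \eqref{eq2.5} is used — is the bookkeeping that makes the construction \emph{consistent}: we must choose $M_j \to \infty$ fast enough that $u(x_j)=M_j \ge j\, h(|x_j|)$ (so $u \neq O(h(|x|))$) and simultaneously $v(y_j)=M_j \ge j\, h(|y_j|)$, yet choose the radii $r_j$ small enough that the balls are disjoint and clustering only at $0$, while still large enough (relative to $M_j$) that the bump's Laplacian stays bounded by the fixed constants $F$, $G$ above. A spike of height $M_j$ on a ball of radius $r_j$ that is harmonic-like (fundamental solution truncated) has $-\Delta$ of order $M_j / (r_j^2 |\log r_j|)$ in $n=2$; requiring this to be $\le F$ forces $r_j^2 |\log r_j| \gtrsim M_j/F$, i.e. $r_j$ cannot be too small. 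But there is no conflict: $h$ is merely some fixed continuous function blowing up at $0$, so $h(|x_j|)$ is some fixed sequence, and we are free to first pick $M_j := j\,\max\{h(|x_j|), h(|y_j|)\}$, \emph{then} pick $r_j$ as large as the disjointness constraint allows (which only requires $r_j < \tfrac14 |x_j - x_{j+1}| \sim 2^{-j}$ and similarly for the $y$'s) — but this last constraint forces $r_j \lesssim 2^{-j}$, which is far too small to satisfy $r_j^2|\log r_j| \gtrsim M_j$. The resolution, and the actual role of \eqref{eq2.5}: we do \emph{not} use a fixed-constant bound; instead, since $f$ grows superexponentially, on $B(x_j,r_j)$ we allow $v$ to itself be slightly large — but $v$ is bounded there by the fixed constant, so this doesn't help $u$. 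Hence the correct construction (as in \cite{T2006}) is the reverse coupling: let the \emph{singularities of $u$ feed the bound for $v$}, i.e. place $\phi_j$ spike in $u$ of height $M_j$ on $B(x_j, r_j)$, and on that \emph{same} ball let $v$ also carry a (larger) spike $\psi_j$ of height $N_j$, chosen so that $g(u) \ge g(\text{something} \le M_j)$ is not the relevant bound — rather we need $-\Delta v \le g(u)$ where $u$ can be as large as $M_j$, so $-\Delta v \le g(M_j)$ is \emph{permitted to be huge}; this huge allowance lets $\psi_j$ be a very sharp spike of height $N_j := g(M_j)\, r_j^2 |\log r_j|/C$, which can be made $\to \infty$; and symmetrically $-\Delta u \le f(v) \le f(N_j)$ permits $\phi_j$ sharp, closing a bootstrap that forces \emph{both} $M_j$ and $N_j$ to infinity precisely because $\log f$ and $\log g$ are superlinear. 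I would carry this out by: (1) fixing $r_j = 2^{-j^2}$, say; (2) defining $M_1$ large, then $N_j$ from $M_j$ via the $f$-inequality budget and $M_{j+1}$ from $N_j$ via the $g$-inequality budget, checking by \eqref{eq2.5} that $M_j, N_j \to \infty$ faster than $h(|x_j|)$; (3) invoking Lemma \ref{lem4.1} to realize the bumps with the required Laplacian bounds and disjoint supports; (4) summing, checking $u,v \in C^2(B_1\setminus\{0\})$, positivity, and the solution inequalities ball-by-ball and off the balls; (5) reading off \eqref{eq2.6}, \eqref{eq2.7} from $u(x_j) = M_j \gg h(|x_j|)$ and $v(x_j) = N_j \gg h(|x_j|)$ (note both violations occur along the \emph{same} sequence $x_j$, which is fine). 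The delicate part is verifying the superexponential growth of $f,g$ indeed makes the recursively defined $M_j, N_j$ outgrow the arbitrary fixed sequence $h(|x_j|)$ despite the very small radii $r_j$; this is exactly where the factor $r_j^2|\log r_j|$ (tiny) competes against $f, g$ evaluated at the previous large value, and \eqref{eq2.5} wins because $f(M) \gg e^{KM}$ beats any fixed negative power of $r_j$ once $M$ is large.
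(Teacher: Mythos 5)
Your final mechanism is the right one and is in the same spirit as the paper's proof: spikes built with Lemma \ref{lem4.1} on balls clustering at the origin, with the superexponential hypothesis \eqref{eq2.5} absorbing the large Laplacian of a sharp spike because the partner function is itself large on that same ball. But the concrete bookkeeping you propose does not close, and it is precisely the part you yourself flag as ``delicate.'' First, fixing $r_j=2^{-j^2}$ in advance cannot work: a spike of height $M_j$ at scale $r_j$ in the plane carries mass of order $M_j/\log(1/r_j)$, and the superposition of bumps is only finite and smooth off the origin (this is hypothesis \eqref{eq4.2} of Lemma \ref{lem4.1}) if these masses are summable. Since $h$ is an \emph{arbitrary} function blowing up at $0$, the requirement $M_j\ge j\,h(|x_j|)$ with $\log(1/r_j)\sim j^2$ destroys summability; the radii must be chosen \emph{after} $h$ (and after the moduli of $f,g$), with the amplitude held fixed at the summable value $\varphi(|x_j|)=|x_j|$, so that the height $A|x_j|\log(1/r_j)$ beats $h(|x_j|)^2$ by shrinking $r_j$ while the mass never changes --- this is exactly conditions \eqref{eq5.22.1}--\eqref{eq5.24} in the paper. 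Second, your recursion ``$N_j$ from $M_j$, then $M_{j+1}$ from $N_j$'' has the wrong structure: the two inequalities couple $u$'s Laplacian to $v$'s height and $v$'s Laplacian to $u$'s height \emph{on the same ball}; nothing links ball $j$ to ball $j+1$, and no recursion is needed --- each ball's constraints are met simply by taking $r_j$ small, because $|x_j|\log(1/r_j)\cdot M\bigl(A|x_j|\log(1/r_j)\bigr)>2\log(1/r_j)$ once $M(t):=\min_{\tau\ge t}\frac{\log\min\{f,g\}(\tau)}{\tau}\to\infty$.

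Two further points. Your statement ``$u$ can be as large as $M_j$, so $-\Delta v\le g(M_j)$ is permitted'' conflates the maximum of $u$ with its pointwise values: the inequality $-\Delta v(x)\le g(u(x))$ must hold at every point of the ball, so you need a lower bound for $u$ on the \emph{whole} ball (which \eqref{eq4.6} of Lemma \ref{lem4.1} supplies) together with a tail infimum of $g$ (the function $M$ above), since $f$ and $g$ are not assumed monotone. Finally, you miss the simplification that makes all of this painless: take $v=u$ and $F=\min\{f,g\}$. Then a single family of spikes settles both inequalities at once, the spike in $u$ is its own justification for the size of $-\Delta u$, and both \eqref{eq2.6} and \eqref{eq2.7} follow along the same sequence $x_j$. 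With that observation your construction collapses to the paper's half-page argument.
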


By the following theorem, if at least one of the functions $f$ and $g$
is exponentially bounded at $\infty$ then at least one of the positive
solutions $u$ and $v$ of the system \eqref{eq1.1} is harmonically bounded
at $0$.

\begin{thm}\label{thm2.3}
 Suppose $u(x)$ and $v(x)$ are $C^2$ positive solutions of the system
 \begin{align*}
  &0\leq -\Delta u\\
  &0\leq -\Delta v\leq g(u)
 \end{align*}
 in a punctured neighborhood of the origin in $\mathbb{R}^2$, where
 $g:(0,\infty)\to(0,\infty)$ is continuous and exponentially bounded
 at $\infty$.  Then $v$ is harmonically bounded, that is
 \begin{equation}\label{eq2.8}
  v(x)=O\left(\log \frac{1}{|x|}\right) \quad \text{as }x\to0.
 \end{equation}
 If, in addition,
 \begin{equation*}
  -\Delta u\leq f(v)
 \end{equation*}
 in a punctured neighborhood of the origin, where $f:(0,\infty)\to(0,\infty)$ is a continuous function satisfying 
 \begin{equation*}
  \log^{+} f(t)=O(t^\lambda ) \quad \text{as }t\to\infty
 \end{equation*}
 for some $\lambda>1$ then
 \begin{equation}\label{eq2.9}
  u(x)=o\left( \left(\log \frac{1}{|x|}\right)^{\lambda}\right) \quad \text{as }x\to 0.
 \end{equation}
\end{thm}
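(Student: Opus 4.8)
The plan is to exploit the one-sided coupling: the second inequality $0 \le -\Delta v \le g(u)$ together with exponential boundedness of $g$ is exactly the hypothesis of a single-inequality result (the $n=2$ analog of the scalar theory from \cite{T2001}), so I would first establish \eqref{eq2.8} by treating $v$ as a positive superharmonic function whose Laplacian is controlled by $g(u)$. The key tool is the Brezis--Lions representation from Appendix \ref{secA}: since $v$ is positive and superharmonic in a punctured neighborhood, we may write $v(x) = N\mu(x) + c\,\Gamma(|x|) + w(x)$ where $\mu = -\Delta v \in L^1_{loc}$, $c \ge 0$, $N$ is the Newtonian (logarithmic) potential in $\R^2$, and $w$ is harmonic. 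The term $c\,\Gamma$ is already harmonically bounded, $w$ is bounded near $0$, so everything reduces to bounding $N\mu(x) = \int \Gamma(|x-y|)\,d\mu(y)$ with $d\mu \le g(u(x))\,dx$. Here is where I must break the circularity: $u$ itself is only known to be superharmonic a priori (from $0 \le -\Delta u$), and a positive superharmonic function in a punctured 2-disk satisfies $u(x) = O(\Gamma(|x|)) \cdot$(something)? No --- actually it need not; but by the Brezis--Lions formula again $u(x) \le N\nu(x) + c'\Gamma(|x|) + (\text{bounded})$ for some finite measure $\nu \ge 0$, and the crude bound $\int_{B_r} \Gamma \, d\nu$ gives $u(x) = o(\Gamma(|x|)^{?})$... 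The cleaner route: a positive superharmonic function on $B_1\setminus\{0\}$ in $\R^2$ satisfies $\int_{B_{1/2}} u\, dx < \infty$ and, crucially, the spherical average $\bar u(r) := \fint_{|x|=r} u$ is convex in $\log(1/r)$ and hence $\bar u(r) \le C \log(1/r)$ once we know... no, convexity alone gives $\bar u(r) = O(\log \frac1r)$ only with an a priori bound at some inner radius, which we don't have. So instead I would use that $-\Delta u \ge 0$ has finite mass $\int_{B_{1/2}}(-\Delta u) < \infty$ (a standard consequence), giving $u \le N\nu + (\text{harmonic})$; then feed the weak-type / integrability of $N\nu$ into $g$.

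Concretely, the heart of the argument is: with $g(t) \le e^{Ct}$ for large $t$, and $u = N\nu + h_u$ with $h_u$ bounded below (WLOG by making the ball smaller), estimate
\[
\mu(B_r) = \int_{B_r}(-\Delta v) \le \int_{B_r} g(u)\,dx \le C\!\int_{B_r} e^{C(N\nu + h_u)}\,dx + C|B_r|.
\]
The exponential integrability of the logarithmic Newtonian potential --- a Trudinger--Moser type fact: $e^{\epsilon N\nu} \in L^1_{loc}$ for small $\epsilon$ when $\nu$ has no large atoms, but $\nu$ may have an atom $c'\delta_0$ which is absorbed into the $c'\Gamma$ term and hence is harmless once separated out --- lets me conclude $\mu(B_r) \le C r^{\beta}$ for some $\beta > 0$, possibly after first bounding $N\nu$ pointwise by $o(\log\frac1r)^{1+\delta}$ via iteration. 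Then $N\mu(x) = \int \Gamma(|x-y|)\,d\mu(y)$ with $d\mu$ having polynomially decaying mass at $0$ is itself $O(\log\frac1{|x|})$ by splitting the integral at $|y| \approx |x|$: the near part contributes $\lesssim \mu(B_{2|x|}) \log\frac1{|x|} + \int_{B_{2|x|}} \log\frac1{|x-y|}d\mu \to 0$, the far part $\lesssim \int_{2|x| < |y| < 1} \log\frac1{|y|}\,d\mu(y) < \infty$. This gives \eqref{eq2.8}.

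For the second assertion \eqref{eq2.9}, now that $v(x) = O(\log\frac1{|x|})$ is established, I plug this into $-\Delta u \le f(v)$ with $\log^+ f(t) = O(t^\lambda)$: for $x$ near $0$,
\[
-\Delta u(x) \le f(v(x)) \le \exp\!\big(C (\log\tfrac1{|x|})^{\lambda} \big) \cdot (1 + o(1)),
\]
so the measure $\nu = -\Delta u$ satisfies $\nu(B_r \setminus B_{r/2}) \lesssim r^2 \exp(C(\log\frac1r)^\lambda) = o\big(\exp((\log\frac1r)^\lambda)\big)$ because the $r^2$ beats any such subexponential-in-$\log$ growth; summing the dyadic annuli gives $\nu(B_r) = o\big((\log\frac1r)^{\lambda-1} \exp((\log\frac1r)^\lambda)\big)$, wait --- I only need $\nu(B_r) = o\big((\log\frac1r)^{\lambda}\big)$ type control to pull through the potential, and the point is the $r^2$ gives room to spare. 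Then $u = N\nu + c'\Gamma + (\text{bounded})$; the $\Gamma$ term is $O(\log\frac1{|x|}) = o((\log\frac1{|x|})^\lambda)$ since $\lambda > 1$, and for $N\nu$ the same near/far splitting at $|y| \approx |x|$ shows $N\nu(x) = o\big((\log\frac1{|x|})^{\lambda}\big)$: the far integral $\int_{|x|<|y|<1}\log\frac1{|y|}\,d\nu(y)$ is, by the decay of $\nu$, bounded by $\int_0^{(\log\frac1{|x|})} s \cdot d\big(o(e^{Cs^\lambda})\big)$-type expression which, after integration by parts and using that $r^2 e^{C(\log\frac1r)^\lambda}$ is summable over dyadic scales with room to spare, is $o((\log\frac1{|x|})^\lambda)$; the near integral is $\lesssim \nu(B_{2|x|}) \log\frac1{|x|} + (\text{lower order})$, also $o((\log\frac1{|x|})^\lambda)$.

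The main obstacle, and the step requiring genuine care rather than bookkeeping, is the \emph{first} part: breaking the a priori circularity to get \eqref{eq2.8} when all we know about $u$ is $-\Delta u \ge 0$. A positive superharmonic function in a punctured 2-disk can blow up arbitrarily fast along a sequence (indeed Theorem \ref{thm2.2} shows solutions of the full system can), so I cannot bound $u$ pointwise outright. The resolution is that $g$ only sees $u$ through the \emph{integral} $\int_{B_r} g(u)\,dx$, and even a badly-behaved superharmonic $u$ has its Newtonian-potential part exponentially integrable in $L^1_{loc}$ (the logarithmic kernel's Trudinger--Moser property) \emph{after} one strips off the $c'\Gamma(|x|)$ singular-harmonic term --- but $e^{C c'\Gamma} = |x|^{-Cc'}$ is only locally integrable in $\R^2$ when $Cc' < 2$, which need not hold. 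So the genuinely delicate point is to show that in fact $c' = 0$ here, i.e.\ that $u$ has \emph{no} fundamental-solution component, equivalently that $\liminf_{|x|\to 0} u(x)/\log\frac1{|x|} = 0$ --- or, failing that, to run the argument on the sub-level structure of $u$ and extract that the "bad set'' where $u \gtrsim \log\frac1{|x|}$ has small enough capacity/measure that $\int g(u)$ over it is still controlled. I expect the correct statement is that one first proves $u(x) = O(\Gamma(|x|)^{1+\epsilon})$ fails in general, so the argument must be the measure-theoretic one: decompose $B_r = E_r \cup E_r^c$ where $E_r = \{x \in B_r : u(x) > M\log\frac1{|x|}\}$, show $|E_r|$ is super-polynomially small in $r$ using the weak-$L^1$ bound on the Newtonian potential of the finite measure $-\Delta u$, and bound $\int_{E_r} g(u) \le |E_r|^{1/2}\|g(u)\|_{L^2(E_r)}$ only if $g(u)\in L^2$, which again needs the $L^1$-exponential-integrability. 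I would want to consult the scalar case \cite{T2001} for the precise mechanism; morally it is a capacity estimate showing the exceptional set is negligible for the exponential nonlinearity.
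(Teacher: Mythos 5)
Your outline of the second assertion is close in spirit to the paper's Lemma \ref{lem4.3} (near/far splitting of the Brezis--Lions representation, finite total mass for the far field, the pointwise bound $-\Delta u\le e^{C(\log\frac1{|x|})^\lambda}$ for the near field), but the first assertion \eqref{eq2.8} --- which you yourself single out as the heart of the matter --- is not proved, and the routes you float do not work. Showing that $u$ has no fundamental-solution component is both false in general ($u=\Gamma$ is a perfectly admissible solution) and beside the point; polynomial decay $\mu(B_r)\le Cr^\beta$ for $\mu=-\Delta v$ is not obtainable, since the only a priori information is $\mu(B_r)\to0$ with no rate; and the global Trudinger--Moser/exponential-integrability of $N\nu$, $\nu=-\Delta u$, fails precisely for the reason you half-notice: one needs $K\|\nu\|$ below the critical threshold, and neither the atom at the origin nor the diffuse part of $\nu$ need be small. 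The exceptional-set/capacity sketch at the end is an acknowledgement that the mechanism is missing, not a proof. The paper's resolution (proof of Theorem \ref{thm5.1}) is a localized contradiction argument: assuming $v(x_j)/\log\frac1{|x_j|}\to\infty$, one rescales to unit scale around $x_j$, notes that the masses $\int_{B_{|x_j|/2}(x_j)}(-\Delta u)$ and $\int_{B_{|x_j|/2}(x_j)}(-\Delta v)$ tend to zero while the far field contributes only $O(\log\frac1{|x_j|})$ to $u$, and then applies the Brezis--Merle/Jensen trick: $e^{Ku}$ is integrable with exponent $p_j=\pi/(K\int f_j)\to\infty$ on the set where the local potential exceeds $M_j=C\log\frac1{|x_j|}$. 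Off that set the rescaled density $g_j$ of $-\Delta v$ is bounded by $e^{2M_j}$, and a mass-rearrangement (bathtub) argument shows that small total mass, density at most $e^{2M_j}$, and a logarithmically weighted integral that is $\gg M_j$ are mutually incompatible as $M_j\to\infty$. It is exactly this local-smallness-of-mass mechanism, replacing your global exponential integrability, that breaks the circularity; nothing in your proposal substitutes for it.

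A further concrete error in your second part: the claim that $r^2$ ``beats'' $\exp\bigl(C(\log\frac1r)^\lambda\bigr)$ is false for $\lambda>1$, since then $(\log\frac1r)^\lambda\gg\log\frac1{r^2}$ and $r^2e^{C(\log\frac1r)^\lambda}\to\infty$; so the decay of $\nu(B_r)$ you try to extract from the pointwise bound does not exist. It is also unnecessary: the far part is handled by $\int_{|y-x|>|x|/2}\log\frac1{|x-y|}\,d\nu\le\bigl(\log\frac4{|x|}\bigr)\|\nu\|=O\bigl(\log\frac1{|x|}\bigr)=o\bigl((\log\frac1{|x|})^\lambda\bigr)$, and the near part requires combining the smallness (no rate) of the local mass with the pointwise density bound through the concentration/rearrangement estimate of Lemma \ref{lem4.3}, not the bookkeeping with dyadic annuli you sketch.
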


Note that in Theorems \ref{thm2.1}--\ref{thm2.3} we impose no
conditions on the growth of $f(t)$ (or $g(t)$) as $t\to0^{+}$.

By the following theorem, the bounds \eqref{eq2.9} and
\eqref{eq2.8} for $u$ and $v$ in Theorem \ref{thm2.3} are optimal.

\begin{thm}\label{thm2.4}
  Suppose $\lambda>1$ is a constant and $\psi:(0,1)\to(0,1)$ is a continuous
  function satisfying $\lim_{r\to 0^{+}} \psi (r)=0$.  Then there
  exist $C^\infty$ positive solutions $u(x)$ and $v(x)$ of the system
 \begin{equation}\label{eq2.10}
 \begin{aligned}
 0 & \le -\Delta u\le e^{v^\lambda}\\
 0 & \le -\Delta v\le e^u
 \end{aligned}
\qquad\text{in }B_2 (0)\backslash \{0\} \subset \mathbb{R}^2
 \end{equation}
 such that
 \begin{equation}\label{eq2.12}
  u(x)\neq O\left( \psi(|x|)\left(\log\frac{2}{|x|}\right)^{\lambda} \right)\quad \text{as }x\to0
 \end{equation}
 and
 \begin{equation}\label{eq2.13}
  \frac{v(x)}{\log\frac{1}{|x|}} \to 1 \quad \text{as }x\to0.
 \end{equation}
\end{thm}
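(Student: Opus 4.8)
The plan is to prove the sharpness of Theorem \ref{thm2.3} by the explicit counterexample construction indicated in the Introduction: I would fix $v$ to be an explicit harmonic function and build $u$ so that it is harmonic off a sequence of small pairwise disjoint balls $B_{r_j}(x_j)$, $x_j\to0$, on each of which $-\Delta u$ is a large bump supported in $B_{r_j}(x_j)$; the resulting data is of the type handled by Lemma \ref{lem4.1}. Take $v(x)=\log\frac{2}{|x|}$: it is positive and harmonic in $B_2(0)\setminus\{0\}$, so $0\le-\Delta v=0\le e^{u}$ holds trivially and $v(x)/\log\frac1{|x|}\to1$, which is \eqref{eq2.13}. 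Everything then reduces to constructing $u$. The mechanism is a balancing act. Near $x$ the ``ambient'' size of $v$ is $\log\frac1{|x|}$, so $e^{v^{\lambda}}\approx e^{(\log\frac1{|x|})^{\lambda}}$, and since $\lambda>1$ this beats every power of $\frac1{|x|}$, leaving enormous room for bumps of $-\Delta u$; on the other hand, for $u$ to be a finite $C^\infty$ function the bumps must have summable total mass, and a bump of mass $m$ in a ball of radius $r$ raises the potential at its center only by $\approx m\log\frac1r$. To beat $\psi(|x|)(\log\frac2{|x|})^{\lambda}$ despite this I would exploit that $\psi\to0$: given $\psi$, choose $t_j\searrow0$ with $t_1<1$, $t_{j+1}<t_j/2$ and $\psi(t_j)<4^{-j}$; put $x_j=t_je_1$, $L_j=\log\frac1{t_j}$, and $r_j=c_0\,2^{-j/2}\exp(-\tfrac12L_j^{\lambda})$ for a suitable constant $c_0>0$. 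Because $\lambda>1$, $\tfrac12L_j^{\lambda}-L_j\to\infty$, so $r_j\ll t_j$ for large $j$; discarding finitely many $j$, the $\overline{B_{r_j}(x_j)}$ are pairwise disjoint and lie in $B_2(0)\setminus\{0\}$.

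Next I would take $\varphi_j\in C_c^\infty(B_{r_j}(x_j))$, $\varphi_j\ge0$, with $\int\varphi_j=2^{-j}$ and $\|\varphi_j\|_\infty\le e^{L_j^{\lambda}}$ (possible after fixing $c_0$, since a rescaled fixed mollifier has $\|\varphi_j\|_\infty\simeq 2^{-j}r_j^{-2}\simeq e^{L_j^{\lambda}}$). On $B_{r_j}(x_j)$ one has $|x|<2t_j$, hence $e^{v(x)^{\lambda}}=e^{(\log\frac2{|x|})^{\lambda}}>e^{L_j^{\lambda}}\ge\varphi_j(x)$; as the supports are disjoint, $0\le\sum_j\varphi_j\le e^{v^{\lambda}}$ on $B_2(0)\setminus\{0\}$. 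Letting $w_j\ge0$ solve $-\Delta w_j=\varphi_j$ in $B_2(0)$, $w_j=0$ on $\partial B_2(0)$, I would set $u=\tfrac12\log\frac2{|x|}+\sum_j w_j$. Since the balls accumulate only at the origin, on each compact $K\subset B_2(0)\setminus\{0\}$ all but finitely many $w_j$ are harmonic and $O(2^{-j})$ there, so interior elliptic estimates give $C^\infty_{\mathrm{loc}}$ convergence of the series (in particular each $u(x)$ is finite because $\sum_j\int\varphi_j=\sum_j2^{-j}<\infty$); hence $u$ is a positive $C^\infty$ function on $B_2(0)\setminus\{0\}$ with $0\le-\Delta u=\sum_j\varphi_j\le e^{v^{\lambda}}$, i.e.\ $(u,v)$ solves \eqref{eq2.10}.

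To finish, I would estimate $u$ at the centers. Since $\log\frac1{r_j}=\tfrac j2\log2+\tfrac12L_j^{\lambda}-\log c_0\ge\tfrac12L_j^{\lambda}$ for $j$ large, and using $G_{B_2(0)}(x_j,y)\ge\frac1{2\pi}\log\frac1{|x_j-y|}$ for $x_j,y$ near $0$ together with $\int\varphi_j=2^{-j}$,
$$u(x_j)\ \ge\ w_j(x_j)\ \ge\ \frac{1}{2\pi}\Big(\log\frac1{r_j}\Big)\!\int\varphi_j\ \ge\ \frac{2^{-j}}{4\pi}\,L_j^{\lambda}\qquad(j\ \text{large}).$$
Because $\log\frac2{|x_j|}=L_j+\log2\le 2L_j$ for large $j$, this yields
$$\frac{u(x_j)}{\psi(|x_j|)\big(\log\frac{2}{|x_j|}\big)^{\lambda}}\ \ge\ \frac{2^{-j}L_j^{\lambda}/(4\pi)}{\psi(t_j)(2L_j)^{\lambda}}\ =\ \frac{2^{-j}}{2^{\lambda+2}\pi\,\psi(t_j)}\ \ge\ \frac{2^{j}}{2^{\lambda+2}\pi}\ \to\ \infty .$$
Since $|x_j|=t_j\to0$, this is exactly \eqref{eq2.12}.

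The crux — and the only real obstacle — is the tension between summability of the bump masses (needed so that $u$ is a bona fide finite, smooth solution) and the requirement that $u$ be large at the points $x_j$: it is resolved by letting $t_j$ decay fast enough that $\psi(t_j)$ is negligible, while the hypothesis $\lambda>1$ is precisely what guarantees that the admissible density $e^{v^{\lambda}}$ exceeds the density $e^{L_j^{\lambda}}$ forced by the chosen radii $r_j$. Everything else is elementary verification that disjointness of the balls, $\varphi_j\le e^{v^{\lambda}}$, $\sum_j\int\varphi_j<\infty$, and $u(x_j)\gg\psi(t_j)(\log\frac2{|x_j|})^{\lambda}$ can be arranged simultaneously.
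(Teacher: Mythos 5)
Your construction is correct and is essentially the paper's own argument: the paper deduces Theorem \ref{thm2.4} from Theorem \ref{thm6.2}, taking $v$ to be an explicit logarithm (so \eqref{eq2.13} and the second inequality of \eqref{eq2.10} are immediate) and building $u$ via Lemma \ref{lem4.1} as a superposition of smooth bumps on tiny balls $B_{r_j}(x_j)$ clustering at $0$ with $r_j\approx e^{-\frac12(\log\frac{2}{|x_j|})^{\lambda}}$, harmonic off the balls, the blow-up at $x_j$ coming from $(\text{mass})\times\log\frac1{r_j}$ together with the smallness of $\psi$ at the chosen points. Your only deviations are bookkeeping: masses $2^{-j}$ at points with $\psi(t_j)<4^{-j}$ in place of the paper's choice $\varphi=\sqrt{\psi}$, and the Green's function of $B_2(0)$ in place of the logarithmic kernel used in Lemma \ref{lem4.1}.
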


The following theorem generalizes Theorems \ref{thm2.1} and
\ref{thm2.3} by allowing $u$ and $v$ to be negative and allowing the
right sides of (\ref{eq2.1}, \ref{eq2.2}) to depend on $x$.

\begin{thm}\label{thm2.5}
 Let $U(x)$ and $V(x)$ be $C^2$ solutions of the system
 \begin{equation}\label{eq2.14}
  0\leq -\Delta U\leq |x|^{-a} e^{|V|^\lambda}, \qquad U(x)>-a\log \frac{1}{|x|}
 \end{equation}
 \begin{equation}\label{eq2.15}
  0\leq -\Delta V\leq |x|^{-a} e^U, \qquad V(x)>-a \log\frac{1}{|x|}
 \end{equation}
 in a punctured neighborhood of the origin in $\mathbb{R}^2$ where $a$
 and $\lambda$ are positive constants.  Then
 \begin{equation}\label{eq2.16}
  U(x)=O\left( \log\frac{1}{|x|}\right)+o\left( \left(\log\frac{1}{|x|}\right)^\lambda \right) \quad \text{as }x\to0
 \end{equation}
 \begin{equation}\label{eq2.17}
  V(x)=O\left(\log\frac{1}{|x|}\right) \quad  \text{as }x\to0.
 \end{equation}
\end{thm}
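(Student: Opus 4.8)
The plan is to deduce the result from Theorems \ref{thm2.1} and \ref{thm2.3} by a single change of unknowns that simultaneously makes the functions positive and absorbs the explicit dependence on $x$ in the right-hand sides of \eqref{eq2.14}--\eqref{eq2.15} into the exponentials. Since $\log\frac{1}{|x|}$ is harmonic in $\mathbb{R}^2\setminus\{0\}$, after shrinking the punctured neighborhood so that $\log\frac{1}{|x|}>0$ and $|x|^{a}<1$ throughout it, I set
$$\overline{U}=U+2a\log\frac{1}{|x|},\qquad \overline{V}=V+2a\log\frac{1}{|x|}.$$
Then $-\Delta\overline{U}=-\Delta U\ge 0$ and $-\Delta\overline{V}=-\Delta V\ge 0$, so $\overline{U}$ and $\overline{V}$ are $C^{2}$ superharmonic, and the pointwise lower bounds for $U$ and $V$ in \eqref{eq2.14}--\eqref{eq2.15} give $\overline{U},\overline{V}>a\log\frac{1}{|x|}>0$.

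Next I rewrite the two differential inequalities in terms of the new unknowns. From $U=\overline{U}-2a\log\frac{1}{|x|}$ one computes $|x|^{-a}e^{U}=|x|^{a}e^{\overline{U}}\le e^{\overline{U}}$, so $0\le -\Delta\overline{V}\le e^{\overline{U}}$. For the $U$-inequality, $|x|^{-a}=e^{a\log\frac{1}{|x|}}\le e^{\overline{V}}$ because $a\log\frac{1}{|x|}<\overline{V}$, and $|V|=\big|\overline{V}-2a\log\frac{1}{|x|}\big|<3\overline{V}$ (check the cases $\overline{V}\ge 2a\log\frac{1}{|x|}$, where $|V|\le\overline{V}$, and $a\log\frac{1}{|x|}<\overline{V}<2a\log\frac{1}{|x|}$, where $|V|<2a\log\frac{1}{|x|}<2\overline{V}$). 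Hence
$$0\le -\Delta\overline{U}\le |x|^{-a}e^{|V|^{\lambda}}\le e^{\overline{V}+3^{\lambda}\overline{V}^{\lambda}}=:\widetilde{f}(\overline{V}),$$
where $\widetilde{f}\colon(0,\infty)\to(0,\infty)$ is continuous with $\log^{+}\widetilde{f}(t)=t+3^{\lambda}t^{\lambda}$, which is $O(t)$ if $\lambda\le 1$ and $O(t^{\lambda})$ if $\lambda>1$. Shifting by $2a$ rather than by the minimal amount $a$ is exactly what forces $a\log\frac{1}{|x|}<\overline{V}$ and makes $|V|$ comparable to $\overline{V}$; with the smaller shift one is left with residual factors $e^{a\log\frac{1}{|x|}}$ and $e^{(a\log\frac{1}{|x|})^{\lambda}}$ that are not dominated by any function of $\overline{V}$, and the $x$-dependence cannot be eliminated.

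Finally, the pair $(\overline{U},\overline{V})$ of $C^{2}$ positive functions solves the autonomous system $0\le -\Delta\overline{U}\le\widetilde{f}(\overline{V})$, $0\le -\Delta\overline{V}\le g(\overline{U})$ with $g(t)=e^{t}$, in a punctured neighborhood of the origin. If $\lambda\le 1$, both $\widetilde{f}$ and $g$ are exponentially bounded, so Theorem \ref{thm2.1} gives $\overline{U},\overline{V}=O(\log\frac{1}{|x|})$; undoing the substitution yields $U,V=O(\log\frac{1}{|x|})$, and since $o\big((\log\frac{1}{|x|})^{\lambda}\big)$ is subsumed by $O(\log\frac{1}{|x|})$ when $\lambda\le 1$, this gives \eqref{eq2.16}--\eqref{eq2.17}. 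If $\lambda>1$, the first part of Theorem \ref{thm2.3} gives $\overline{V}=O(\log\frac{1}{|x|})$, hence $V=O(\log\frac{1}{|x|})$, which is \eqref{eq2.17}; and because $\log^{+}\widetilde{f}(t)=O(t^{\lambda})$ with $\lambda>1$, the second part of Theorem \ref{thm2.3} gives $\overline{U}=o\big((\log\frac{1}{|x|})^{\lambda}\big)$, so that $U=\overline{U}-2a\log\frac{1}{|x|}=o\big((\log\frac{1}{|x|})^{\lambda}\big)+O(\log\frac{1}{|x|})$, which is \eqref{eq2.16}. No new analytic ingredient is needed beyond Theorems \ref{thm2.1} and \ref{thm2.3}; the only point requiring care is the bookkeeping just described — choosing the shift so that the powers $|x|^{\pm a}$ are absorbed with the correct sign and $|V|$ becomes comparable to $\overline{V}$, and then checking that $\widetilde{f}$ falls within the hypotheses of the cited theorems in each regime.
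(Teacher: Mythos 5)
Your proof is correct, and it differs from the paper's in a way worth noting. The paper also starts by adding a multiple of the harmonic function $a\log\frac{1}{|x|}$, but it shifts by exactly $a$: with $u=U+a\log\frac{1}{|x|}$, $v=V+a\log\frac{1}{|x|}$ the second inequality becomes $0\le-\Delta v\le e^{u}$ on the nose, and the first part of Theorem \ref{thm2.3} already yields \eqref{eq2.17}. For \eqref{eq2.16} the paper does not try to write $-\Delta u\le f(v)$ at all; instead it feeds the now-known two-sided bound $|V|\le C\log\frac{1}{|x|}$ back into \eqref{eq2.14} to get $\log^{+}(-\Delta u)\le a\log\frac{1}{|x|}+C\bigl(\log\frac{1}{|x|}\bigr)^{\lambda}$ and invokes Lemma \ref{lem4.3} directly with $H(t)=t+Ct^{\lambda}$, which handles every $\lambda>0$ in one stroke. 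Your doubled shift $2a$ is exactly what lets you instead dominate $|x|^{-a}$ by $e^{\overline V}$ and $|V|$ by a multiple of $\overline V$, so the whole problem collapses to the autonomous system and you only ever quote the statements of Theorems \ref{thm2.1} and \ref{thm2.3} (never Lemma \ref{lem4.3} explicitly), at the modest cost of the cruder nonlinearity $\widetilde f(t)=e^{t+3^{\lambda}t^{\lambda}}$ and a case split between $\lambda\le 1$ and $\lambda>1$; your bookkeeping ($\overline V>a\log\frac{1}{|x|}$, $|V|<3\overline V$, $|x|^{a}\le 1$) checks out, and the $\lambda\le1$ case is correctly subsumed since $O(\log\frac{1}{|x|})$ implies \eqref{eq2.16} there. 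In short, same opening move, but your finish reduces everything to the black-box theorems, while the paper's finish is a direct application of the representation-formula lemma after establishing \eqref{eq2.17}.
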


The analog of Theorem \ref{thm2.5} when the singularity is at $\infty$
instead of at the origin is the following result.

\begin{thm}\label{thm2.6}
 Let $u(y)$ and $v(y)$ be $C^2$ solutions of the system
 \begin{equation*}
  0\leq -\Delta u\leq |y|^a e^{|v|^\lambda}, \qquad  u(y)>-a\log |y|
 \end{equation*}
 \begin{equation*}
  0\leq \Delta v\leq |y|^a e^u, \qquad v(y)>-a\log |y|
 \end{equation*}
 in the complement of a compact subset of $\mathbb{R}^2$ where $a$ and
 $\lambda$ are positive constants.  Then
\begin{equation} 
\begin{split}
  &u(y)=O(\log |y|)+o((\log |y|)^\lambda ) \\
  &v(y)=O(\log |y|) 
 \end{split} 
\quad \text{ as }|y|\to\infty.
\end{equation}
\end{thm}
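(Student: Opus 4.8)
The plan is to reduce Theorem~\ref{thm2.6} to Theorem~\ref{thm2.5} by the standard Kelvin-type inversion that exchanges the singularity at $\infty$ with the singularity at the origin. Concretely, given solutions $u(y), v(y)$ in $\mathbb{R}^2\setminus \overline{B_R(0)}$ for some $R$, I would set $x = y/|y|^2$, so that $|x| = 1/|y|$ and $x$ ranges over a punctured neighborhood of the origin, and define
\begin{equation*}
U(x) = u\bigl(x/|x|^2\bigr), \qquad V(x) = v\bigl(x/|x|^2\bigr).
\end{equation*}
In dimension $n=2$ the Laplacian transforms cleanly under this inversion: a short computation gives $\Delta_x U(x) = |x|^{-4}\,(\Delta_y u)(x/|x|^2)$, and similarly for $V$. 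This conformal covariance of the Laplacian in two dimensions is exactly what makes the argument work without extra weight factors beyond a power of $|x|$.

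Next I would translate the hypotheses. Since $|y| = 1/|x|$, the bound $0\le -\Delta_y u \le |y|^a e^{|v|^\lambda}$ becomes, after multiplying by $|x|^{-4}$,
\begin{equation*}
0 \le -\Delta_x U(x) \le |x|^{-4}\,|x|^{-a}\, e^{|V(x)|^\lambda} = |x|^{-(a+4)} e^{|V(x)|^\lambda},
\end{equation*}
and likewise $0 \le -\Delta_x V(x) \le |x|^{-(a+4)} e^{U(x)}$ (here the sign convention $0\le \Delta v$ in the hypothesis becomes $0 \le -\Delta V$ after the inversion, matching \eqref{eq2.15}). The lower bounds $u(y) > -a\log|y|$ and $v(y) > -a\log|y|$ become $U(x) > a\log|x| = -a\log\frac{1}{|x|}$ and $V(x) > -a\log\frac{1}{|x|}$. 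Thus $U$ and $V$ satisfy a system of exactly the form (\ref{eq2.14}, \ref{eq2.15}) with the constant $a$ replaced by $a+4$ and the same $\lambda$. Applying Theorem~\ref{thm2.5} yields
\begin{equation*}
U(x) = O\Bigl(\log\tfrac{1}{|x|}\Bigr) + o\Bigl(\bigl(\log\tfrac{1}{|x|}\bigr)^\lambda\Bigr), \qquad V(x) = O\Bigl(\log\tfrac{1}{|x|}\Bigr) \quad\text{as }x\to 0.
\end{equation*}
Translating back via $\log\frac{1}{|x|} = \log|y|$ and $x\to 0 \iff |y|\to\infty$ gives precisely the claimed estimates for $u(y)$ and $v(y)$.

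The only genuinely substantive point is verifying the two-dimensional transformation rule for $\Delta$ under $x\mapsto x/|x|^2$ and checking that no spurious terms appear; this is a routine but slightly delicate calculation with the chain rule (equivalently, one invokes the conformal invariance of $\Delta$ in $\mathbb{R}^2$, for which the conformal factor of the inversion is $|x|^{-2}$, contributing $|x|^{-4}$ to the Laplacian on functions). Everything else is bookkeeping: tracking the power of $|x|$ through the right-hand sides and confirming that the sign conventions in the hypotheses of Theorem~\ref{thm2.6} line up with \eqref{eq2.14}--\eqref{eq2.15} after inversion. A minor technical caveat is that one should restrict attention to $|x|$ small enough that $x/|x|^2$ lies in the region where the hypotheses on $u,v$ hold, which is automatic since $u,v$ are assumed to solve the system outside a compact set.
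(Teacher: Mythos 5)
Your proposal is exactly the paper's proof: the paper disposes of Theorem \ref{thm2.6} in one line by applying the Kelvin transform $U(x)=u(y)$, $V(x)=v(y)$, $y=x/|x|^2$, and invoking Theorem \ref{thm2.5}; your computation of the conformal factor, the resulting exponent $a+4$, and the translation of the logarithmic lower bounds are all correct bookkeeping for that same reduction.

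One point in your write-up is wrong as stated, however: the parenthetical claim that ``the sign convention $0\le \Delta v$ in the hypothesis becomes $0\le -\Delta V$ after the inversion.'' The two-dimensional inversion does not flip the sign of the Laplacian: as you yourself computed, $\Delta_x V(x)=|x|^{-4}(\Delta_y v)(x/|x|^2)$ with a strictly positive factor, so $0\le \Delta v$ would transform into $0\le \Delta V$, i.e.\ $V$ subharmonic, and Theorem \ref{thm2.5} (which requires $0\le -\Delta V$) would not apply. The actual resolution is that the ``$0\le \Delta v$'' in the statement of Theorem \ref{thm2.6} is a sign misprint for ``$0\le -\Delta v$'' --- consistent with the system \eqref{eq1.6} in Question 2, with Theorem \ref{thm3.6}, and with the fact that the paper's one-line proof via Theorem \ref{thm2.5} only works for superharmonic $v$. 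With that reading, no sign flip is needed and your argument is precisely the paper's; but as a matter of logic you should not justify the step by asserting a transformation property of $\Delta$ that is false.
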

 \begin{proof}
  Apply the Kelvin transform
  \begin{equation*}
   U(x)=u(y), \quad V(x)=v(y), \quad y=\frac{x}{|x|^2}
  \end{equation*}
  and then use Theorem \ref{thm2.5}.
 \end{proof}

\section{Statement of three and higher dimensional
  results}\label{sec3}

In this section we state our results for Questions 1 and 2 when
$n\geq 3$.  We will mainly be concerned with the case that the
continuous functions $f,g:(0,\infty)\to(0,\infty)$ in Questions 1
and 2 satisfy

\begin{equation}\label{eq3.1}
 f(t)=O(t^\lambda ) \quad \text{ as }t\to\infty
\end{equation}
\begin{equation}\label{eq3.2}
 g(t)=O(t^\sigma) \quad \text{ as }t\to\infty
\end{equation}
for some nonnegative constants $\lambda$ and $\sigma$.  We can assume
without loss of generality that $\sigma\leq \lambda$.

If $\lambda$ and $\sigma$ are nonnegative constants satisfying
$\sigma\leq \lambda$ then $(\lambda ,\sigma)$ belongs to one of the
following four pointwise disjoint subsets of the $\lambda
\sigma$-plane:
\begin{align*}
 &A:=\left\{ (\lambda ,\sigma): \, 0\leq \sigma \leq \lambda \leq \frac{n}{n-2}\right\} \\
 &B:=\left\{ (\lambda ,\sigma): \, \lambda>\frac{n}{n-2}
   \quad\text{and}
\quad 0\leq \sigma<\frac{2}{n-2}+\frac{n}{n-2} \frac{1}{\lambda} \right\} \\
 &C:=\left\{ (\lambda ,\sigma): \, \lambda>\frac{n}{n-2} \quad
   \text{and}\quad \frac{2}{n-2}+\frac{n}{n-2} \frac{1}{\lambda}<\sigma \leq \lambda \right\} \\
 &D:=\left\{ (\lambda ,\sigma): \, \lambda>\frac{n}{n-2} \quad
   \text{and}\quad \sigma=\frac{2}{n-2}+\frac{n}{n-2} \frac{1}{\lambda} \right\}.
\end{align*}
\begin{figure}[H]
 \includegraphics[scale=.65]{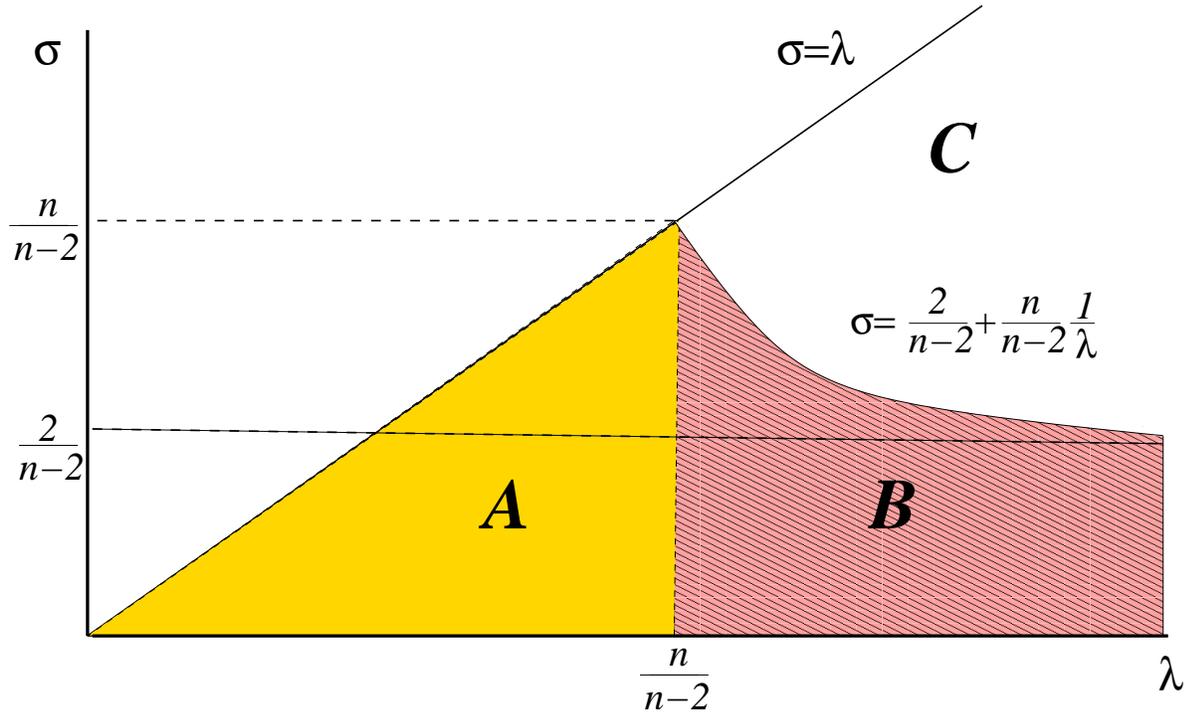}
 \caption{Graph of regions A, B and C.}
\end{figure}

Note that $A$, $B$ and $C$ are two dimensional regions in the $\lambda
\sigma$-plane whereas $D$ is the curve separating $B$ and $C$. (See
Figure 1.)

In this section we give a complete answer to Question 1 when
$n\geq 3$ and the functions $f$ and $g$ satisfy (\ref{eq3.1}, \ref{eq3.2}) where
$(\lambda ,\sigma)\in A\cup B\cup C$.  The following theorem deals
with the case that $(\lambda ,\sigma)\in A$.

\begin{thm}\label{thm3.1}
  Let $f,g:(0,\infty)\to(0,\infty)$ be continuous functions satisfying
  (\ref{eq3.1}, \ref{eq3.2}) where
 \begin{equation}\label{eq3.3}
  0\leq \sigma\leq \lambda\leq \frac{n}{n-2}.
 \end{equation}
 Suppose $u(x)$ and $v(x)$ are $C^2$ positive solutions of the system
 \begin{equation}\label{eq3.4}
  0\leq -\Delta u\leq f(v)
 \end{equation}
 \begin{equation}\label{eq3.5}
  0\leq -\Delta v\leq g(u)
 \end{equation}
 in a punctured neighborhood of the origin in $\mathbb{R}^n , \, n\geq
 3$.  Then both $u$ and $v$ are harmonically bounded, that is
 \begin{equation}\label{eq3.6}
  u(x)=O(|x|^{-(n-2)}) \quad \text{ as }x\to0
 \end{equation}
 \begin{equation}\label{eq3.7}
  v(x)=O(|x|^{-(n-2)}) \quad \text{ as }x\to0.
 \end{equation}
\end{thm}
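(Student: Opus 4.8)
The plan is to combine the Brezis--Lions representation of superharmonic functions with sharp pointwise bounds for Newtonian potentials; the role of the hypothesis $\lambda,\sigma\le\frac{n}{n-2}$ is that $\Gamma$ is then a stable profile for the relevant potential operators.

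First I would apply the Brezis--Lions lemma of Appendix~\ref{secA}. Since $-\Delta u\ge0$ and $-\Delta v\ge0$, the functions $u$ and $v$ are positive superharmonic in a punctured neighborhood of the origin, so on some ball $B_\rho(0)$ one may write $u=N\mu_u+H_u$ and $v=N\mu_v+H_v$, where $N$ is the Newtonian potential over $B_\rho(0)$, the nonnegative measures $\mu_u=-\Delta u$ and $\mu_v=-\Delta v$ have continuous densities (as $u,v\in C^2$) and finite mass, and $H_u,H_v$ are of the form (nonnegative multiple of $\Gamma$) $+$ (harmonic), hence $O(\Gamma(|x|))$ as $x\to0$. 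Thus it suffices to prove $N\mu_u(x)=O(\Gamma(|x|))$ and $N\mu_v(x)=O(\Gamma(|x|))$. By \eqref{eq3.1}--\eqref{eq3.2} one has, near $0$, $0\le\mu_u\le f(v)\le C(v^\lambda+1)$ and $0\le\mu_v\le g(u)\le C(u^\sigma+1)$; the behaviour of $f$ and $g$ near $0$ does not affect the argument (the conclusion is trivial when $u$ or $v$ is bounded near the origin), and I suppress it.

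Next I would record the baseline bound that superharmonicity alone provides: for $n\ge3$ the spherical mean $r\mapsto\frac{1}{|\partial B_r|}\int_{\partial B_r}u$ is a concave function of $\Gamma(r)$, so it is $O(\Gamma(r))$; integrating, $\|u\|_{L^1(B_{2r}\setminus B_r)}=O(r^2)$, and likewise for $v$. Feeding this into the inequalities for $\mu_u,\mu_v$ and running the Moser-type iteration of Lemma~\ref{lem4.6} on the dyadic annuli $B_{2r}\setminus B_r$ (rescaled to unit size) should produce a first, not yet optimal, pointwise bound $u(x)\le C|x|^{-\beta_0}$, $v(x)\le C|x|^{-\alpha_0}$ near $0$. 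With such a bound available I would bootstrap through the representation, using the elementary estimate $N(|y|^{-\gamma})(x)\le C|x|^{-(\gamma-2)_+}$ for $0<\gamma<n$ — in particular $N(\Gamma^\lambda)\le C\Gamma$, which holds exactly because $\lambda\le\frac{n}{n-2}$, and, when $\sigma\ge\frac{2}{n-2}$, the estimate for $N((Ng)^\sigma)$ from Corollary~\ref{cor1}. Concretely, if $v\le C|x|^{-\alpha}$ with $\alpha\le n-2$ then $\mu_u\le Cv^\lambda\le C|x|^{-\lambda\alpha}$ with $\lambda\alpha\le n$, whence $u\le N\mu_u+H_u\le C|x|^{-\max\{\lambda\alpha-2,\,n-2\}}$; symmetrically, $u\le C|x|^{-\beta}$ gives $v\le C|x|^{-\max\{\sigma\beta-2,\,n-2\}}$, using $\sigma\le\lambda\le\frac{n}{n-2}$. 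Since $\lambda(n-2)-2\le n-2$ and $\sigma(n-2)-2\le n-2$, the exponent $n-2$ is a fixed point of this two-step map; iterating from $(\alpha_0,\beta_0)$ drives the exponents down to $n-2$ with uniformly bounded constants, which is exactly \eqref{eq3.6} and \eqref{eq3.7}.

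I expect the main obstacle to be the Moser-type step producing the first pointwise bound for the coupled supersolutions in a punctured neighborhood (Lemma~\ref{lem4.6}); once one is on the scale of bounds $O(|x|^{-\gamma})$, the remaining bootstrap is routine bookkeeping. A second, more delicate point occurs on the boundary of region $A$, i.e. when $\lambda=\frac{n}{n-2}$ or $\sigma=\frac{n}{n-2}$: there the naive potential estimate degenerates logarithmically ($N(\Gamma^{n/(n-2)})\sim\Gamma\log\frac{1}{|x|}$, while $\Gamma^{n/(n-2)}=|y|^{-n}$ is not even integrable), so one must use that $\mu_u$ has finite mass — which prevents $v$ from saturating the profile $\Gamma$ on a full neighborhood of the origin — in order to recover the clean bound $O(\Gamma)$ rather than $O(\Gamma\log\frac1{|x|})$.
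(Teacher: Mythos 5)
Your overall strategy (Brezis--Lions representation, a first rough bound, then a potential-theoretic bootstrap) runs into a genuine obstruction exactly at the worst point of region $A$, namely $\lambda=\sigma=\frac{n}{n-2}$, and that case cannot be avoided. First, the Moser-type iteration you invoke (Lemma \ref{lem4.6}) is only established under the strict condition $\sigma<\frac{2}{n-2}+\frac{n}{n-2}\frac{1}{\lambda}$ (see \eqref{eq4.40}--\eqref{eq4.43}): the per-step integrability gain is $C_0=\min(C_1,C_2)$ with $C_2$ proportional to $\frac{2-\varepsilon}{n-2+\varepsilon}+\frac{n}{n-2+\varepsilon}\frac1\lambda-\sigma$, and this degenerates to $0$ at the corner point $\bigl(\frac{n}{n-2},\frac{n}{n-2}\bigr)$, which lies \emph{on} the critical curve $D$. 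So the "first, not yet optimal, pointwise bound" is not available from the paper's machinery in precisely the case you must handle. Second, even granting some initial exponents $\beta_0,\alpha_0$, your bootstrap does not converge: the two-step exponent map is $\beta\mapsto\lambda\sigma\beta-2\lambda-2$ (before the $\max$ with $n-2$), whose fixed point is $\frac{2\lambda+2}{\lambda\sigma-1}$, equal to $n-2$ when $\lambda=\sigma=\frac{n}{n-2}$; since $\lambda\sigma>1$ there, this fixed point is \emph{repelling}, so from any $\beta_0>n-2$ the exponents increase rather than "drive down to $n-2$." Your computation only closes when the input exponent is already $\le n-2$, i.e.\ when the conclusion is already known --- the argument is circular at the critical corner. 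Your closing remark about the logarithmic degeneration and finite mass of $\mu_u$ correctly identifies where the difficulty sits, but it is not a proof: removing that logarithm at the critical exponent is exactly the hard content of the scalar theorem at exponent $\frac{n}{n-2}$.

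For comparison, the paper's proof is a short reduction that sidesteps all of this: since enlarging $\lambda,\sigma$ only weakens \eqref{eq3.1}--\eqref{eq3.2}, one may take $\lambda=\sigma=\frac{n}{n-2}$; then (after replacing $f(v),g(u)$ by $Kv^{n/(n-2)},Ku^{n/(n-2)}$ near the origin, using that $u,v$ are bounded below by positive constants there) the sum $w=u+v$ satisfies the single inequality $0\le-\Delta w\le Kw^{n/(n-2)}$ in a punctured ball, and the scalar result of Taliaferro \cite{T2001} (Theorem 2.1 there) gives $w(x)=O(|x|^{-(n-2)})$, hence \eqref{eq3.6} and \eqref{eq3.7}. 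If you want to salvage your approach, you would essentially have to reprove that critical-exponent scalar theorem for $w$, which is where the real work lies.
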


By Remark \ref{rem1}, the bounds \eqref{eq3.6} and \eqref{eq3.7} are optimal.

The following two theorems deal with the case $(\lambda ,\sigma) \in B$.

\begin{thm}\label{thm3.2}
  Let $f,g:(0,\infty)\to(0,\infty)$ be continuous functions satisfying
  (\ref{eq3.1}, \ref{eq3.2}) where
 \begin{equation}\label{eq3.8}
  \lambda>\frac{n}{n-2} \quad\text{and}\quad
\sigma<\frac{2}{n-2}+\frac{n}{n-2} \frac{1}{\lambda}.
 \end{equation}
 Suppose $u(x)$ and $v(x)$ are $C^2$ positive solutions of the system
 (\ref{eq3.4}, \ref{eq3.5}) in a punctured neighborhood of the origin in $\mathbb{R}^n
 , \, n\geq 3$.  Then
 \begin{equation}\label{eq3.9}
  u(x)=o\left(|x|^{-\frac{(n-2)^2}{n}\lambda} \right) \quad \text{ as }x\to0
 \end{equation}
and
 \begin{equation}\label{eq3.10}
  v(x)=O\left(|x|^{-(n-2)} \right) \quad \text{ as }x\to0.
 \end{equation}
\end{thm}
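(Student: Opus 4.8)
The plan is to combine the Brezis--Lions representation of positive superharmonic functions (Appendix~\ref{secA}) with a bootstrap argument whose two analytic ingredients are the Moser-type iteration of Lemma~\ref{lem4.6} and the pointwise/integral estimates for nonlinear potentials obtained in Section~\ref{nonlin}, in particular Corollary~\ref{cor1}.

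First I would pass from the differential inequalities to integral inequalities. Shrinking the punctured neighbourhood to some $B_{2R}(0)\setminus\{0\}$ and applying the representation formula, one obtains nonnegative functions $\mu:=-\Delta u$ and $\nu:=-\Delta v$ in $L^{1}(B_{2R})$, constants $a,b\ge 0$, and functions $h_{1},h_{2}$ harmonic in $B_{2R}$, so that
$$
u=N\mu+a\,\Gamma(|x|)+h_{1},\qquad v=N\nu+b\,\Gamma(|x|)+h_{2}\qquad\text{in }B_{2R}(0)\setminus\{0\},
$$
where $N$ is the Newtonian potential over $B_{2R}$. On $B_{R}$ the harmonic parts are bounded; since moreover a positive superharmonic function on a punctured ball is bounded below near the singularity by a positive constant, \eqref{eq3.1} and \eqref{eq3.2} give, after a further shrinking of $R$,
$$
0\le\mu\le C\,(1+v^{\lambda}),\qquad 0\le\nu\le C\,(1+u^{\sigma})\qquad\text{in }B_{R}(0)\setminus\{0\}.
$$
It then suffices to estimate $N\mu$ and $N\nu$ pointwise as $x\to 0$.

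Next I would set up the bootstrap. Substituting the representation of $v$ into the bound for $\mu$ and that of $u$ into the bound for $\nu$ leads to the nonlinear potentials $N((N\nu)^{\lambda})$ and $N((N\mu)^{\sigma})$, together with the elementary potentials $N(\Gamma^{\lambda})$ and $N(\Gamma^{\sigma})$. The crucial difficulty is that, because $\lambda>\frac{n}{n-2}$, the power $v^{\lambda}$ is not locally integrable near $0$ once $v$ has harmonic growth, so $N(v^{\lambda})$ cannot be controlled by the naive convolution $N(|x|^{-(n-2)\lambda})$, which diverges --- and the same obstruction recurs for $u^{\sigma}$ whenever the exponents generated in the iteration exceed $n$. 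One circumvents this by rescaling about a sequence $x_{j}\to 0$, setting $u_{j}(\xi)=|x_{j}|^{\gamma}u(x_{j}+|x_{j}|\xi)$ (and likewise for $v_{j}$) with $\gamma$ chosen so that the rescaled right-hand sides are \emph{bounded} functions on a fixed ball; for such a bounded $g$ the nonlinear potential $N((Ng)^{\sigma})$ is estimated, via the Hedberg inequalities and Wolff-potential bounds of Corollary~\ref{cor1} (valid in the range $\sigma\ge\frac{2}{n-2}$ that is forced here by $(\lambda,\sigma)\in B$), by a power of $|\xi|$ whose exponent improves on the input. Undoing the rescaling turns this gain into a strict decrease of the blow-up exponents of $u$ and $v$, while a Moser iteration in the spirit of Lemma~\ref{lem4.6} provides both a first crude polynomial bound to start the loop and the passage from integral to pointwise bounds needed to run it. Because $\sigma\le\lambda$ and $(\lambda,\sigma)$ lies \emph{strictly below} the critical curve \eqref{curve}, the resulting exponent map is contracting: its iterates drive the exponent of $v$ down to $n-2$, which is \eqref{eq3.10}, and the exponent of $u$ down to $\frac{(n-2)^{2}}{n}\lambda$; the strict inequality in \eqref{eq3.8} leaves a positive margin at the fixed point, which upgrades the bound for $u$ from $O$ to $o$ and yields \eqref{eq3.9}.

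The step I expect to be the main obstacle is the one just described: one must run the rescaling so that Corollary~\ref{cor1} is applied with exactly the right exponents at each stage, keep quantitative control of the constants and of the mass functions $r\mapsto\mu(B_{r}(0))$ and $r\mapsto\nu(B_{r}(0))$ as they propagate through the iteration, and verify that the per-step losses are summable, so that the limiting exponents are precisely $\frac{(n-2)^{2}}{n}\lambda$ and $n-2$ rather than some larger values. That these cannot be lowered is consistent with the extremal solutions, supported on balls clustering at the origin, used elsewhere to prove optimality, which shows the bootstrap must be carried out with no slack to spare.
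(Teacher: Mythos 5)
Your overall strategy is in essence the paper's: reduce to $0\le-\Delta u\le v^\lambda$, $0\le-\Delta v\le u^\sigma$ near the origin, then combine the Brezis--Lions representation (Lemma \ref{A.1}), the iteration of Lemma \ref{lem4.6} (which, as in Lemma \ref{lem4.5}(ii), yields a first crude bound $-\Delta u(x)=O(|x|^{-\gamma})$ for some $\gamma>n$), the nonlinear potential estimate of Corollary \ref{cor1}, and a contracting exponent iteration; the paper packages exactly this as Theorem \ref{thm3.7}, Case D, applied with $\alpha=\beta=0$. But two points in your sketch are genuine gaps. First, your claim that $(\lambda,\sigma)\in B$ forces $\sigma\ge\frac{2}{n-2}$ is false: $B$ allows every $\sigma\ge 0$ below the critical curve, while Corollary \ref{cor1} and Lemma \ref{lem4.4}(iii) require $\sigma\ge\frac{2}{n-2}$. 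The paper's fix, which you omit, is the observation that enlarging $\sigma$ only weakens hypothesis \eqref{eq3.2} and the asserted bounds do not involve $\sigma$, so one may assume $\frac{2}{n-2}<\sigma<\frac{2}{n-2}+\frac{n}{n-2}\frac{1}{\lambda}$ from the outset; without this (or a separate, easier treatment of small $\sigma$ via Lemma \ref{lem4.5}(i)), your use of the potential estimates is unjustified on part of $B$.

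Second, the step you yourself flag as the main obstacle is left unexecuted, and your picture of it (summable per-step losses, ``no slack to spare,'' the $o$ in \eqref{eq3.9} coming from margin at the fixed point) is not how the argument closes. The improvement is an affine recursion on the exponent of $v$: if $v(x)=O(|x|^{-p})$ with $p>n-2$, then $-\Delta u=O(|x|^{-p\lambda})$, and Lemma \ref{lem4.4}(iii) returns $v(x)=O(|x|^{-(n-2)})+o(|x|^{-ap})$ with $a=\frac{\lambda}{n}[(n-2)\sigma-2]\in(0,1)$ by strict subcriticality (here $b=0$ since $\alpha=\beta=0$). The fixed point is $0<n-2$, so finitely many iterations push the exponent below $n-2$, and the exact bound \eqref{eq3.10} is then supplied by the fundamental-solution term in the Brezis--Lions formula, not by a zero-slack limiting procedure. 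Likewise the little-$o$ in \eqref{eq3.9} comes from the vanishing of $\int_{B_{|x|/2}(x)}(-\Delta u)\,dy$ as $x\to 0$ (Lemma \ref{A.1}), as in the proof of Lemma \ref{lem4.4}(i), combined with $\frac{(n-2)^2}{n}\lambda>n-2$, not from the distance of $(\lambda,\sigma)$ to the critical curve; that distance enters only through $a<1$ and through the constant $C_0>0$ in Lemma \ref{lem4.6}. So the route is the right one, but these mechanisms must be supplied for a complete proof --- or, more simply, perform the WLOG reduction on $\sigma$ and quote Theorem \ref{thm3.7}(D2), which is what the paper does.
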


By the following theorem the bounds \eqref{eq3.9} and \eqref{eq3.10} for $u$ and $v$ in Theorem \ref{thm3.2} are optimal.

\begin{thm}\label{thm3.3}
  Suppose $\lambda$ and $\sigma$ satisfy \eqref{eq3.8} and
  $\psi:(0,1)\to(0,1)$ is a continuous function satisfying $\lim_{r\to
    0^{+}} \psi(r)=0$.  Then there exist $C^\infty$ positive solutions
  $u(x)$ and $v(x)$ of the system
 \begin{equation}\label{eq3.11}
 \begin{aligned}
 0 & \le -\Delta u\le v^\lambda\\
 0 & \le -\Delta v\le u^\sigma
 \end{aligned}
 \text{\qquad in }\mathbb{R}^n \backslash \{0\}, \, n\geq 3
 \end{equation}
 such that
\begin{equation}\label{eq3.13}
  u(x)\neq O\left( \psi(|x|) |x|^{-\frac{(n-2)^2}{n}\lambda}\right) 
\quad \text{as }x\to0
 \end{equation}
 and
 \begin{equation}\label{eq3.14}
  v(x)|x|^{n-2} \to 1 \quad \text{as }x\to0.
 \end{equation}
\end{thm}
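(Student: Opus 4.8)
This is an optimality (sharpness) statement, so the natural route is an explicit construction on a sequence of tiny disjoint balls clustering at the origin, with $u$ and $v$ harmonic off the balls, exactly as outlined in the introduction (and built with the aid of Lemma \ref{lem4.1}). The key observation is that the $v$-conclusion \eqref{eq3.14} and the $v$-inequality in \eqref{eq3.11} cost nothing: simply take $v(x):=\Gamma(|x|)=|x|^{-(n-2)}$, which is positive, $C^\infty$ and harmonic in $\R^n\setminus\{0\}$, so that $0\le-\Delta v=0\le u^\sigma$ automatically (once $u>0$) and $v(x)|x|^{n-2}\equiv 1$. Thus the exponent $\sigma$ and the full inequality \eqref{eq3.8} will not be used; only $\lambda>\tfrac{n}{n-2}$ enters. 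Everything reduces to constructing $u$.

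Fix a unit vector $e$, set $x_j:=d_j e$ with $d_j\downarrow 0$ to be chosen, and let $\rho_j:=\psi(d_j)^{1/4}\,d_j^{(n-2)\lambda/n}$. Since $\lambda>\tfrac{n}{n-2}$ the exponent $(n-2)\lambda/n>1$, hence $\rho_j/d_j\to 0$; so, letting $d_j\to0$ fast enough, the closed balls $\overline{B_{\rho_j}(x_j)}$ are pairwise disjoint, lie in $B_1(0)\setminus\{0\}$, and satisfy $d_j/2\le|x|\le 2d_j$ on each of them, whence $v^\lambda\ge(2d_j)^{-(n-2)\lambda}$ there. Pick $\phi_j\in C_c^\infty(B_{\rho_j}(x_j))$ with $0\le\phi_j\le 1$ and $\phi_j\equiv1$ on $B_{\rho_j/2}(x_j)$, put $\mu_j:=2^{-(n-2)\lambda-1}\,d_j^{-(n-2)\lambda}\,\phi_j$, let $N_j$ be the Newtonian potential of $\mu_j$ on $\R^n$ (normalized so $-\Delta N_j=\mu_j$; then $N_j\in C^\infty(\R^n)$ and $N_j\ge0$), and define
\[
 u:=\varepsilon\,\Gamma(|\cdot|)+\sum_{j\ge1}N_j,\qquad \varepsilon\in(0,1].
\]
Since the supports of the $\mu_j$ are disjoint, $-\Delta u=\sum_j\mu_j$ with at most one nonzero summand at each point, so $0\le-\Delta u\le v^\lambda$ throughout $\R^n\setminus\{0\}$, and $u\ge\varepsilon\Gamma>0$. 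The elementary lower bound for the Newtonian potential of a mollified indicator gives
\[
 u(x_j)\ \ge\ N_j(x_j)\ \ge\ C_n\,d_j^{-(n-2)\lambda}\rho_j^{2}\ =\ C_n'\,\psi(d_j)^{1/2}\,d_j^{-(n-2)^2\lambda/n},
\]
so that $u(x_j)\big/\big(\psi(|x_j|)\,|x_j|^{-(n-2)^2\lambda/n}\big)\ge C_n'\,\psi(d_j)^{-1/2}\to\infty$, which is \eqref{eq3.13}.

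It remains to verify that $u$ is a genuine finite $C^\infty$ function on $\R^n\setminus\{0\}$ (and harmonic off the balls). Each bump has mass $\|\mu_j\|_1\le C\,\psi(d_j)^{n/4}$; for $x$ in a compact subset $K$ of $\R^n\setminus\{0\}$, after discarding the finitely many $j$ with $d_j$ comparable to $\operatorname{dist}(K,0)$, the remaining bumps sit at distance $\gtrsim\operatorname{dist}(K,0)$ (resp.\ $\gtrsim d_k$ when $x$ lies near some $x_k$, since those bumps cluster at $0$), so $\sum_j N_j$ converges uniformly on $K$ with $\sum_j N_j\lesssim_K\sum_j\psi(d_j)^{n/4}$. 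Hence it suffices to arrange $\sum_j\psi(d_j)^{n/4}<\infty$, and because $\psi(r)\to0$ as $r\to0^+$ we may simply choose $d_j\downarrow0$ so rapidly that $\psi(d_j)<2^{-4j/n}$ (and rapidly enough for the disjointness and containment used above). For such a sequence the series converges locally uniformly in $\R^n\setminus\{0\}$; off the balls it is a locally uniform limit of harmonic functions, near each ball only the corresponding $N_j$ is non-harmonic, and every $N_j$ is smooth, so $u\in C^\infty(\R^n\setminus\{0\})$. The pair $(u,v)$ then has all the required properties.

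The one real obstacle is the tension in this last step: to overshoot the target rate by the diverging factor $\psi^{-1/2}$ one needs fat enough bumps, but fat bumps carry larger mass $\|\mu_j\|_1$, which jeopardizes convergence of $\sum_j N_j$ and hence the very definition of $u$. The resolution is that $\psi$ is an \emph{arbitrary} function tending to $0$, so the rate at which $d_j\to0$ — which is invisible to both \eqref{eq3.13} and \eqref{eq3.14} — can be taken fast enough to make the masses summably small.
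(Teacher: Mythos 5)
Your construction is correct and is essentially the paper's own proof: the paper also takes $v=\Gamma(|x|)=|x|^{-(n-2)}$ (so $\sigma$ plays no role) and builds $u$ as the Newtonian potential of smooth bumps on disjoint balls clustering at the origin, with the bump size constrained by $v^\lambda\sim d_j^{-(n-2)\lambda}$ and the overshoot coming from a factor $\psi^{-1/2}$, the only difference being that the paper packages this construction in Lemma \ref{lem4.1} (taking $\varphi=\sqrt{\psi}$, radius $r_j=(2|x_j|)^{\frac{n-2}{n}\lambda}$ and bump height scaled by $\varphi$) whereas you inline it, putting the $\psi^{1/4}$ factor into the radius and the positivity into $\varepsilon\Gamma$ instead of the additive constant $1$. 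These are cosmetic reparametrizations of the same argument, and your summability choice $\sum_j\psi(d_j)^{n/4}<\infty$ plays exactly the role of hypothesis \eqref{eq4.2} in Lemma \ref{lem4.1}.
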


The following theorem deals with the case that $(\lambda ,\sigma)\in
C$.  In this case there exist pointwise bounds for neither $u$ nor
$v$.

\begin{thm}\label{thm3.4}
 Suppose $\lambda$ and $\sigma$ are positive constants satisfying
 \begin{equation}\label{eq3.15}
  \frac{2}{n-2}+\frac{n}{n-2} \frac{1}{\lambda}<\sigma\leq\lambda.
 \end{equation}
 Let $h:(0,1)\to(0,\infty)$ be a continuous function satisfying
 \begin{equation*}
  \lim_{r\to0^{+}} h(r)=\infty.
 \end{equation*}
 Then there exist $C^\infty$ solutions $u(x)$ and $v(x)$ of the system
 \begin{equation}\label{eq3.16}
 \left. 
 \begin{aligned}
 0 & \le -\Delta u\le v^\lambda\\
 0 & \le -\Delta v\le u^\sigma\\
 u & >1, \, v>1
 \end{aligned}
 \right\}
 \text{\qquad in } \mathbb{R}^n \setminus \{0\}, \, n\geq 3
 \end{equation}
 such that
 \begin{equation}\label{eq3.17}
  u(x)\neq O(h(|x|)) \quad \text{as }x\to0
 \end{equation}
 and
 \begin{equation}\label{eq3.18}
  v(x)\neq O(h(|x|))\quad \text{as }x\to0.
 \end{equation}
\end{thm}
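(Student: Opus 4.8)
\medskip
\noindent\textit{Proof plan.}
The plan is to build $u$ and $v$ by pasting together countably many smooth \emph{spikes} centred at points $x_j\to 0$. Coupling the two inequalities ties the height of the $u$-spike over $x_j$ to that of the $v$-spike over $x_j$ through two algebraic constraints, and being in region $C$ turns out to be \emph{exactly} the condition under which those constraints still allow the spike heights to blow up while the pasted series remains convergent. We assume $\sigma\le\lambda$, as we may. Fix $\varphi\in C_c^\infty(B_{1/2}(0))$ with $\varphi\ge 0$, $\varphi\not\equiv 0$, and let $\Phi\in C^\infty(\mathbb{R}^n)$ be the Newtonian potential of $\varphi$ on $\mathbb{R}^n$, so $\Phi>0$, $-\Delta\Phi=\varphi$, and $\Phi(y)=O(|y|^{2-n})$ as $|y|\to\infty$; set $c_*:=\Phi(0)$ and $c_0:=\min_{\overline{B_{1/2}(0)}}\Phi>0$, the latter positive by the minimum principle for superharmonic functions. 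Taking $x_j:=2^{-j}e_1$, and for parameters $0<\rho_j<2^{-j-2}$ and heights $M_j,N_j>0$ still to be chosen, I would put
\[
u_j(x):=\frac{M_j}{c_*}\,\Phi\Big(\frac{x-x_j}{\rho_j}\Big),\qquad v_j(x):=\frac{N_j}{c_*}\,\Phi\Big(\frac{x-x_j}{\rho_j}\Big),
\]
so that $u_j(x_j)=M_j$, $v_j(x_j)=N_j$, the supports of $-\Delta u_j$ and $-\Delta v_j$ lie in $B_{\rho_j/2}(x_j)$, on that ball $u_j\ge(c_0/c_*)M_j$ and $v_j\ge(c_0/c_*)N_j$, while $-\Delta u_j\le(M_j/c_*)\rho_j^{-2}\|\varphi\|_\infty$ and $-\Delta v_j\le(N_j/c_*)\rho_j^{-2}\|\varphi\|_\infty$ on all of $\mathbb{R}^n$; and the closed balls $\overline{B_{\rho_j}(x_j)}$ are pairwise disjoint, avoid $0$, and accumulate only at $0$.

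Suppose $\sum_jM_j(2^j\rho_j)^{n-2}<\infty$ and $\sum_jN_j(2^j\rho_j)^{n-2}<\infty$. Then Lemma~\ref{lem4.1} guarantees that $u:=1+\sum_j u_j$ and $v:=1+\sum_j v_j$ are well-defined $C^\infty$ functions on $\mathbb{R}^n\setminus\{0\}$; they satisfy $u,v>1$, are superharmonic, and are harmonic off $\bigcup_jB_{\rho_j/2}(x_j)$. Hence $0\le-\Delta u$ and $0\le-\Delta v$ automatically; the upper inequalities are trivial off $\bigcup_jB_{\rho_j/2}(x_j)$, where both left sides vanish and both right sides exceed $1$; and on $B_{\rho_j/2}(x_j)$, where $-\Delta u=-\Delta u_j$ and $v\ge v_j\ge(c_0/c_*)N_j$ (and symmetrically), $-\Delta u\le v^\lambda$ and $-\Delta v\le u^\sigma$ follow from the \emph{local compatibility conditions}
\[
M_j\le C_1\,N_j^{\lambda}\rho_j^{2}\qquad\text{and}\qquad N_j\le C_1'\,M_j^{\sigma}\rho_j^{2}\qquad(j\text{ large}),
\]
for suitable constants $C_1,C_1'>0$ depending only on $n,\lambda,\sigma,\varphi$. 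Since moreover $u(x_j)\ge M_j$ and $v(x_j)\ge N_j$, it then suffices to arrange in addition that $M_j\ge jh(2^{-j})$ and $N_j\ge jh(2^{-j})$: this forces $u(x_j)/h(|x_j|)\to\infty$ and $v(x_j)/h(|x_j|)\to\infty$, so $u\ne O(h(|x|))$ and $v\ne O(h(|x|))$ as $x\to0$.

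What remains --- and this is the heart of the matter --- is to choose $\rho_j,M_j,N_j$. Because $(\lambda,\sigma)\in C$ forces $\lambda>\tfrac{n}{n-2}$, one has $\sigma\lambda>(\tfrac{n}{n-2})^2>1$; eliminating $N_j$ between the two local compatibility conditions then shows that they are simultaneously solvable for some $N_j>0$ precisely when $M_j\ge C''\rho_j^{-\mu}$, where
\[
\mu:=\frac{2(\lambda+1)}{\sigma\lambda-1}>0,\qquad \nu:=\frac{2(\sigma+1)}{\sigma\lambda-1},\qquad 0<\nu\le\mu\ \ (\text{since }\sigma\le\lambda),
\]
and one may then take $N_j$ of the form $C'''\rho_j^{-\nu}$. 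The decisive point is the algebraic identity: after clearing denominators, the defining inequality $\sigma>\frac{2}{n-2}+\frac{n}{n-2}\,\frac1\lambda$ of region $C$ is \emph{exactly} $\mu<n-2$ (equivalently, $(n-2)\sigma\lambda>2\lambda+n$), and in particular $\nu<n-2$ too. So I would take $M_j:=C''\rho_j^{-\mu}$ and $N_j:=C'''\rho_j^{-\nu}$, which meet the local compatibility conditions for every value of $\rho_j$; then $M_j(2^j\rho_j)^{n-2}=C''2^{j(n-2)}\rho_j^{\,n-2-\mu}$, $N_j(2^j\rho_j)^{n-2}=C'''2^{j(n-2)}\rho_j^{\,n-2-\nu}$, $M_j=C''\rho_j^{-\mu}$ and $N_j=C'''\rho_j^{-\nu}$ impose on $\rho_j$ only finitely many strictly positive \emph{upper} bounds for each $j$ --- the two summability requirements (solvable since $n-2-\mu>0$ and $n-2-\nu>0$), the two blow-up demands $M_j,N_j\ge jh(2^{-j})$, and $\rho_j<2^{-j-2}$ --- and I would let $\rho_j$ be the minimum of these bounds. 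This produces the desired $u$ and $v$.

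The main obstacle is the algebra just indicated: recognizing that $\mu=2(\lambda+1)/(\sigma\lambda-1)$ is the exponent that coupling the two inequalities forces on the spike heights, and verifying that $\mu<n-2$ is equivalent to $(\lambda,\sigma)$ lying in $C$ (or on the critical curve $D$). Granting this, the two competing demands on the spikes --- tall enough over $x_j$ to defeat $h$, yet thin enough that $\sum_jM_j(2^j\rho_j)^{n-2}<\infty$, this last being precisely what makes $u$ a genuine $C^\infty$ function on $\mathbb{R}^n\setminus\{0\}$ --- are reconciled simply by taking $\rho_j$ small, and the pasting-together step is exactly what Lemma~\ref{lem4.1} provides; bookkeeping the finitely many constants $c_0,c_*,C_1,C_1',C'',C'''$ is routine.
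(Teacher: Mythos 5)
Your construction is correct and is essentially the paper's own argument: the paper likewise proves Theorem \ref{thm3.4} by placing spikes at points $x_j\to 0$ (via Lemma \ref{lem4.1}) whose heights beat $h$ and whose radii are shrunk so that the two coupled inequalities hold on each bump, with the region-$C$ hypothesis entering exactly as the solvability condition for the height/width constraints. The only difference is bookkeeping: you encode the constraint as $\mu=2(\lambda+1)/(\sigma\lambda-1)<n-2$ with heights $M_j\asymp\rho_j^{-\mu}$, $N_j\asymp\rho_j^{-\nu}$ (avoiding the paper's preliminary reduction to $\sigma<\tfrac{n}{n-2}$ and its exponents $\alpha,\beta$), which is a cosmetic, if slightly cleaner, variant of the same proof.
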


The following theorem can be viewed as the limiting case of Theorem
\ref{thm3.2} as $\lambda\to\infty$.

\begin{thm}\label{thm3.5}
Let $g:(0,\infty)\to(0,\infty)$ be a continuous function satisfying
\eqref{eq3.2}  where
\[
\sigma<\frac{2}{n-2}.
\]
Suppose $u(x)$ and $v(x)$ are $C^2$ positive solutions of the system
\begin{align*}
0&\le -\Delta u\\
0&\le -\Delta v\le g(u)
\end{align*}  
in a punctured neighborhood of the origin in $\mathbb{R}^n$,
$n\ge3$. Then $v$ is harmonically bounded, that is
\begin{equation}\label{eq3.19}
v(x)=O\left(|x|^{-(n-2)}\right) \quad\text{as }x\to 0.
\end{equation}
\end{thm}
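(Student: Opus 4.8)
The plan is to read off the behaviour of $v$ near the origin from the Brezis--Lions representation (Appendix \ref{secA}) of the two positive superharmonic functions $u$ and $v$, together with the elementary $L^p\to L^\infty$ mapping property of the Newtonian potential; the decisive point is that the hypothesis $\sigma<\frac{2}{n-2}$ is exactly what forces $-\Delta v$ into some space $L^p$ with $p>\frac n2$.

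First, since $u>0$ is superharmonic in some punctured ball $B_{r_0}(0)\setminus\{0\}$, the Brezis--Lions lemma gives $-\Delta u\in L^1(B_\rho(0))$ and a representation
\[
u(x)=m\,\Gamma(|x|)+N(-\Delta u)(x)+h(x),\qquad 0<|x|<\rho,
\]
for some $\rho>0$, where $m\ge 0$, $N$ is the Newtonian potential over $B_\rho(0)$, and $h$ is harmonic, hence bounded, on $B_\rho(0)$. Since $\Gamma(|\cdot|)\in L^q(B_\rho(0))$ whenever $(n-2)q<n$, and since $N$ maps $L^1(B_\rho(0))$ into $L^q(B_\rho(0))$ for the same range of $q$ (Minkowski's integral inequality, as $\sup_y\|\Gamma(|\cdot-y|)\|_{L^q(B_\rho(0))}<\infty$), we obtain $u\in L^q(B_\rho(0))$ for every $1\le q<\frac{n}{n-2}$. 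Moreover, a positive superharmonic function on a punctured ball extends superharmonically across the puncture with value $\liminf_{x\to0}u(x)\in(0,+\infty]$, the positivity coming from the strong minimum principle; hence, after shrinking $\rho$, there is $c_0>0$ with $u\ge c_0$ on $B_\rho(0)\setminus\{0\}$, and then the continuity and positivity of $g$ together with \eqref{eq3.2} give a constant $C$ with $0\le -\Delta v\le g(u)\le C(1+u^\sigma)$ on $B_\rho(0)\setminus\{0\}$.

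Because $\sigma<\frac{2}{n-2}$, the interval $\bigl(\tfrac{n\sigma}{2},\tfrac{n}{n-2}\bigr)$ is nonempty; picking $q$ in it we get $u^\sigma\in L^p(B_\rho(0))$ with $p:=q/\sigma>\frac n2$, hence $-\Delta v\in L^p(B_\rho(0))$ with $p>\frac n2$. Applying the Brezis--Lions lemma to the positive superharmonic function $v$ now yields, after a further shrinking of $\rho$,
\[
v(x)=m'\,\Gamma(|x|)+N(-\Delta v)(x)+h'(x),\qquad 0<|x|<\rho,
\]
with $m'\ge 0$ and $h'$ harmonic and bounded on $B_\rho(0)$. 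By H\"older's inequality,
\[
0\le N(-\Delta v)(x)\le \|{-\Delta v}\|_{L^p(B_\rho(0))}\Bigl(\int_{B_\rho(0)}|x-y|^{-(n-2)p'}\,dy\Bigr)^{1/p'}\le C\quad\text{for }0<|x|<\rho,
\]
the last integral being bounded uniformly in $x$ precisely because $(n-2)p'<n$, i.e.\ $p>\frac n2$. Therefore $v(x)\le m'\,\Gamma(|x|)+C'=O(|x|^{-(n-2)})$ as $x\to0$, which is \eqref{eq3.19}.

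The heart of the argument, and the step I expect to require the most care, is the exponent bookkeeping above: $\sigma<\frac{2}{n-2}$ is the sharp threshold guaranteeing $-\Delta v\in L^p$ for some $p>\frac n2$, which is exactly the integrability for which the Newtonian potential is bounded near $0$; this is also why the statement is sharp, $\sigma=\frac{2}{n-2}$ being the $\lambda\to\infty$ limit of the curve \eqref{curve}, and why for $\sigma\ge\frac{2}{n-2}$ one must instead invoke the nonlinear potential estimates of Section \ref{nonlin}. The only other, minor, technical point is the positive lower bound for $u$ near the origin, needed only because no growth condition on $g$ at $0^+$ is assumed.
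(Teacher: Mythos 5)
Your proof is correct, but it takes a genuinely different route from the paper. The paper first reduces, exactly as you do, to $0\le-\Delta v\le C(1+u^{\sigma})$ via the positive lower bound on the superharmonic function $u$ near $0$, but then it invokes Lemma \ref{lem4.5}(i) with $\beta=0$, whose proof is a rescaling argument: one blows up around points $x_j\to 0$ at scale $r_j=|x_j|/4$, uses Lemma \ref{A.1} to get that the local masses of $-\Delta u$ and $-\Delta v$ near $x_j$ tend to $0$, and then combines Riesz potential estimates ($N_2f_j\to0$ in $L^{\gamma}$ with $\gamma=\tfrac{n}{n-2}(1-\varepsilon)$) with H\"older's inequality to bound $v(x_j)$ by $C\bigl(|x_j|^{-(n-2)}+|x_j|^{2-(n-2)\sigma}\bigr)$. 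You instead work globally on a fixed ball: the Brezis--Lions representation gives $u\in L^{q}$ for every $q<\tfrac{n}{n-2}$, the hypothesis $\sigma<\tfrac{2}{n-2}$ lets you choose $q\in(\tfrac{n\sigma}{2},\tfrac{n}{n-2})$ so that $-\Delta v\in L^{p}$ with $p=q/\sigma>\tfrac n2$, and then a single H\"older estimate shows the Newtonian-potential part of $v$ is uniformly bounded, so $v\le m'\Gamma+O(1)$. Your argument is more elementary and self-contained for this particular statement; the paper's local rescaled version buys the weighted estimate $O\bigl(|x|^{-(n-2)}+|x|^{2-(n-2)\sigma-\beta}\bigr)$ needed for Theorem \ref{thm3.7} and shares its machinery with Lemma \ref{lem4.5}(ii) and the Moser-type iteration, which is why the authors route the proof through it. Two cosmetic points: the case $\sigma=0$ should be noted separately (your $p=q/\sigma$ is undefined there, but then $-\Delta v$ is bounded and the conclusion is immediate), and the harmonic remainders $h,h'$ in Lemma \ref{A.1} are bounded only after shrinking the ball, which your "after shrinking $\rho$" already accommodates.
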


By Remark \ref{rem1}, the bound \eqref{eq3.19} is optimal.

In Theorem \ref{thm3.7} we will extend some of our results to the more
general system
\begin{align*}
 \notag &-\Delta u=|x|^{-\alpha} v^\lambda\\
 &-\Delta v=|x|^{-\beta}u^\sigma.
\end{align*}
Using these extended results and the Kelvin transform, we obtain the
following theorem concerning pointwise bounds for positive solutions
$U(y)$ and $V(y)$ of the system
\begin{equation}\label{eq3.20}
\begin{split}
 &0\leq -\Delta U\leq (V+1)^\lambda \\
 &0\leq -\Delta V\leq (U+1)^\sigma
\end{split}
\end{equation}
in the complement of a compact subset of $\mathbb{R}^n , \, n\geq 3$, where
\begin{equation}\label{eq3.21}
 \lambda\geq\sigma\geq0 \quad \text{ and }\quad\sigma<\frac{2}{n-2}+\frac{n}{n-2} \frac{1}{\lambda}.
\end{equation}
Note that $\lambda$ and $\sigma$ satisfy \eqref{eq3.21} if and only if
$(\lambda ,\sigma)\in\left( A\backslash \left\{
    \left(\frac{n}{n-2},\frac{n}{n-2}\right) \right\} \right) \cup B$
where $A$ and $B$ are defined at the beginning of this section and
graphed in Figure 1.

\begin{thm}\label{thm3.6}
  Let $U(y)$ and $V(y)$ be $C^2$ nonnegative solutions of the system
  \eqref{eq3.20} in the complement of a compact subset of
  $\mathbb{R}^n$, $n\geq 3$, where $\lambda$ and $\sigma$ satisfy
  \eqref{eq3.21}.
 \begin{description}
  \item[Case A.] If $\sigma=0$ then as $|y|\to\infty$
  \begin{align*}
   &U(y)=o\left(|y|^{\frac{n-2}{n}\left(\frac{2(n-2)}{n}\lambda+2\right)}\right)\\
   &V(y)=o\left(|y|^{\frac{2(n-2)}{n}}\right).
  \end{align*}
  \item[Case B.] If $0<\sigma<\frac{2}{n-2}$ then as $|y|\to\infty$
  \begin{align*}
   &U(y)=o\left(|y|^{\frac{2(n-2)(\lambda+1)}{n}} \right)\\
   &V(y)=O(|y|^2).
  \end{align*}  
  \item[Case C.] If $\sigma=\frac{2}{n-2}$ then as $|y|\to\infty$
  \begin{align*}
   &U(y)=o\left(|y|^{\frac{2(n-2)(\lambda+1)}{n}} (\log |y|)^{\frac{n-2}{n}\lambda} \right)\\
   &V(y)=o(|y|^2 \log |y|).
  \end{align*}
  \item[Case D.] Suppose $\sigma>\frac{2}{n-2}$.  Let $\varepsilon>0$ and $D=(n-2)\lambda \left(\frac{2}{n-2}+\frac{n}{n-2} \frac{1}{\lambda}-\sigma \right)$.  Then $D>0$ and as $|y|\to\infty$
  \begin{align*}
   &U(y)=o\left(|y|^{\frac{2(n-2)(\lambda+1)}{D}+\varepsilon}\right)\\
   &V(y)=o\left(|y|^{\frac{2(n-2)(\sigma+1)}{D}+\varepsilon} \right).
  \end{align*}
 \end{description}
\end{thm}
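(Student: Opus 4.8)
The plan is to reduce Theorem~\ref{thm3.6}, whose singularity is at $\infty$, to the origin--singularity results of Theorem~\ref{thm3.7} for the weighted system $-\Delta u=|x|^{-\alpha}v^{\lambda}$, $-\Delta v=|x|^{-\beta}u^{\sigma}$ (in its inequality form), via the Kelvin transform. Given $C^{2}$ nonnegative solutions $U,V$ of \eqref{eq3.20} in $\{|y|>R\}$, set
\[
u(x)=|x|^{2-n}\,U\!\Bigl(\tfrac{x}{|x|^{2}}\Bigr),\qquad v(x)=|x|^{2-n}\,V\!\Bigl(\tfrac{x}{|x|^{2}}\Bigr),\qquad 0<|x|<\tfrac1R .
\]
From the Kelvin identity $\Delta\bigl[\,|x|^{2-n}w(x/|x|^{2})\,\bigr]=|x|^{-n-2}(\Delta w)(x/|x|^{2})$ one obtains $-\Delta u\ge0$, $-\Delta v\ge0$ together with
\[
-\Delta u(x)\le |x|^{-n-2}\bigl(V(x/|x|^{2})+1\bigr)^{\lambda},\qquad -\Delta v(x)\le |x|^{-n-2}\bigl(U(x/|x|^{2})+1\bigr)^{\sigma}.
\]

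The key step is to absorb the additive $1$. Since $V(x/|x|^{2})=|x|^{n-2}v(x)$ and $\Gamma(|x|)=|x|^{2-n}$ is positive and harmonic in the punctured ball, we have $V(x/|x|^{2})+1=|x|^{n-2}\bigl(v(x)+\Gamma(|x|)\bigr)$. Put $\bar u:=u+\Gamma(|\cdot|)$ and $\bar v:=v+\Gamma(|\cdot|)$; these are nonnegative, superharmonic, satisfy $-\Delta\bar u=-\Delta u$ and $-\Delta\bar v=-\Delta v$ in $0<|x|<1/R$, and obey $\bar u,\bar v\ge\Gamma(|\cdot|)>1$ near $0$. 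Substituting, $(\bar u,\bar v)$ is a positive solution of
\[
0\le-\Delta\bar u\le|x|^{-\alpha}\,\bar v^{\lambda},\qquad 0\le-\Delta\bar v\le|x|^{-\beta}\,\bar u^{\sigma},\qquad \bar u,\bar v>1,
\]
in a punctured neighborhood of the origin, with $\alpha=n+2-(n-2)\lambda$ and $\beta=n+2-(n-2)\sigma$ --- precisely the weighted system governed by Theorem~\ref{thm3.7}.

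Next I would verify that $(\alpha,\beta,\lambda,\sigma)$ fall within the admissible range of Theorem~\ref{thm3.7}: the hypothesis $\sigma<\frac{2}{n-2}+\frac{n}{n-2}\frac1\lambda$ in \eqref{eq3.21} is exactly the condition that makes $(\alpha,\beta)$ admissible, and the four cases $\sigma=0$, $0<\sigma<\frac{2}{n-2}$, $\sigma=\frac{2}{n-2}$, $\sigma>\frac{2}{n-2}$ correspond respectively to the degenerate, subcritical, critical, and supercritical regimes for the $\bar v$--equation (namely $\beta>n$, $\beta>n$, $\beta=n$, $\beta<n$), the first being decoupled since $\bar u^{\sigma}\equiv1$. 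Applying the relevant case of Theorem~\ref{thm3.7} yields $\bar u(x)=o(|x|^{-p})$ and $\bar v(x)=o(|x|^{-q})$, with an extra power of $\log\frac1{|x|}$ when $\sigma=\frac{2}{n-2}$ and an arbitrarily small loss $|x|^{-\varepsilon}$ when $\sigma>\frac{2}{n-2}$. Because $u\le\bar u$ and $v\le\bar v$, the same bounds hold for $u$ and $v$, and inverting the Kelvin transform, $U(y)=|y|^{2-n}u(y/|y|^{2})$, $V(y)=|y|^{2-n}v(y/|y|^{2})$, converts $u(x)=o(|x|^{-p})$ into $U(y)=o(|y|^{\,p-(n-2)})$ as $|y|\to\infty$, and likewise for $V$.

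The main obstacle is purely computational bookkeeping: one must check that, after reinstating the factor $|y|^{n-2}$ from the Kelvin inversion, the exponents produced by Theorem~\ref{thm3.7} for the shifted data $\alpha=n+2-(n-2)\lambda$, $\beta=n+2-(n-2)\sigma$ reduce exactly to $\frac{n-2}{n}\bigl(\frac{2(n-2)}{n}\lambda+2\bigr)$ and $\frac{2(n-2)}{n}$ in Case~A, to $\frac{2(n-2)(\lambda+1)}{n}$ and $2$ in Case~B, to the same quantities with the stated logarithmic factors in Case~C, and to $\frac{2(n-2)(\lambda+1)}{D}+\varepsilon$, $\frac{2(n-2)(\sigma+1)}{D}+\varepsilon$ in Case~D, where $D=(n-2)\lambda\bigl(\frac{2}{n-2}+\frac{n}{n-2}\frac1\lambda-\sigma\bigr)>0$, the positivity of $D$ being immediate from \eqref{eq3.21}. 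A minor point is that in Case~A the second inequality degenerates to $0\le-\Delta\bar v\le|x|^{-(n+2)}$, which must be handled by the $\sigma=0$ specialization of Theorem~\ref{thm3.7} (in the spirit of Theorem~\ref{thm3.5}) rather than by the coupled estimate; one should also record that the Kelvin transform preserves $C^{2}$ regularity together with the sign and positivity conditions.
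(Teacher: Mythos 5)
Your proposal is correct and follows essentially the same route as the paper: apply the Kelvin transform to $U,V$, observe that the transformed pair (your $\bar u,\bar v$ is an immaterial cosmetic variant, since Theorem \ref{thm3.7} already carries the additive $|x|^{-(n-2)}$ in its right-hand sides) solves the weighted system with $\alpha=n+2-(n-2)\lambda$, $\beta=n+2-(n-2)\sigma$, invoke Theorem \ref{thm3.7} case by case, and invert the transform. The bookkeeping you defer --- verifying that the subcases (A2), (B2), (C2), (D2) are the ones in force (e.g.\ that neither (i) nor (ii) of (D1) holds, via $\frac{b}{1-a}>n-2$ and $\frac{b\lambda}{1-a}>n-\alpha$) and that the resulting exponents, shifted by $n-2$ under the inversion, reduce to the stated ones --- is exactly the content of the paper's proof and works out as you predict.
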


\begin{thm}\label{thm3.7}
 Let $u(x)$ and $v(x)$ be $C^2$ nonnegative solutions of the system
 \begin{equation}\label{eq3.22}
  0\leq -\Delta u\leq |x|^{-\alpha} \left(v+|x|^{-(n-2)}\right)^\lambda
 \end{equation}
 \begin{equation}\label{eq3.23}
  0\leq -\Delta v\leq |x|^{-\beta} \left(u+|x|^{-(n-2)}\right)^\sigma
 \end{equation}
 in a punctured neighborhood of the origin in $\mathbb{R}^n , \, n\geq
 3$, where $\alpha,\beta\in\mathbb{R}$ and $\lambda$ and $\sigma$
 satisfy \eqref{eq3.21}.
\begin{description}
 \item[Case A.] Suppose $\sigma=0$.
 \begin{description}
 \item[(A1)] If $\beta\leq n$ then as $x\to0$
 \begin{align*}
  &u(x)=O\left( \left(\frac{1}{|x|}\right)^{n-2} \right) + o\left( \left(\frac{1}{|x|}\right)^{\frac{n-2}{n}((n-2)\lambda+\alpha)} \right)\\
  &v(x)=O\left( \left(\frac{1}{|x|}\right)^{n-2} \right).
 \end{align*}
 \item[(A2)] If $\beta>n$ then as $x\to0$
 \begin{align*}
  &u(x)=O\left( \left(\frac{1}{|x|}\right)^{n-2} \right) +o\left( \left(\frac{1}{|x|}\right)^{\frac{n-2}{n}\left(\frac{n-2}{n}\beta\lambda+\alpha\right)} \right)\\
  &v(x)=o\left( \left(\frac{1}{|x|}\right)^{\frac{n-2}{n}\beta} \right).
 \end{align*}
 \end{description}
 \item[Case B.] Suppose $0<\sigma<\frac{2}{n-2}$.  Let
 \begin{equation}\label{eq3.24}
  \delta=\max \{(n-2)\lambda+\alpha, \, [(n-2)\sigma-2+\beta]\lambda+\alpha\}
 \end{equation}
  \begin{description}
  \item[(B1)] If $\delta\leq n$ then as $x\to0$
    \begin{align}\label{eq3.25}
   u(x)&=O\left( \left(\frac{1}{|x|}\right)^{n-2}\right)\\
  \label{eq3.26}
  v(x)&=
  \begin{cases}
  O\left( \left(\frac{1}{|x|}\right)^{n-2}\right), &\text{ if }\beta\leq n-(n-2)\sigma\\
  o\left( (\frac{1}{|x|})^{\frac{n-2}{n}[(n-2)\sigma+\beta]} \right), &\text{ if }\beta>n-(n-2)\sigma.
  \end{cases}   
  \end{align}
  \item[(B2)] If $\delta>n$ then as $x\to0$
  \begin{align}\label{eq3.27}
   u(x)&=o\left( \left(\frac{1}{|x|}\right)^{\frac{n-2}{n}\delta} \right)\\
  \label{eq3.28}
   v(x)&=O\left( \left(\frac{1}{|x|}\right)^{n-2} +\left(\frac{1}{|x|}\right)^{(n-2)\sigma-2+\beta} \right).
  \end{align}
  \end{description}
 \item[Case C.] Suppose $\sigma=\frac{2}{n-2}$.
  \begin{description}
  \item[(C1)] If either
  \begin{enumerate}
   \item $\beta<n-2$ and $(n-2)\lambda+\alpha\leq n; $ or
   \item $\beta\geq n-2$ and $\beta\lambda+\alpha<n$
  \end{enumerate}
   then, as $x\to0$, $u$ and $v$ satisfy \eqref{eq3.25} and
   \eqref{eq3.26}, that is
   \begin{align*}
    &u(x)=O \left( \left(\frac{1}{|x|}\right)^{n-2} \right)\\
    &v(x)=
    \begin{cases}
     O \left( \left(\frac{1}{|x|}\right)^{n-2} \right), &\text{ if }\beta\leq n-2\\
     o\left( \left(\frac{1}{|x|}\right)^{\frac{n-2}{n}(\beta+2)} \right), &\text{ if }\beta>n-2.
    \end{cases}
   \end{align*}
   \item[(C2)] If neither (i) nor (ii) holds then as $x\to0$
   \begin{align}\label{eq3.29}
    u(x)&=
    \begin{cases}
     o\left( \left(\frac{1}{|x|}\right)^{\frac{n-2}{n}[(n-2)\lambda+\alpha]}\right), &\text{ if }\beta<n-2\\
     o\left( \left(\frac{1}{|x|}\right)^{\frac{n-2}{n}(\beta\lambda+\alpha)} \left(\log \frac{1}{|x|}\right)^{\frac{n-2}{n}\lambda} \right), &\text{ if }\beta\geq n-2
    \end{cases}\\
   \label{eq3.30}
    v(x)&=O\left( \left(\frac{1}{|x|}\right)^{n-2}\right) +o\left( \left(\frac{1}{|x|}\right)^\beta \log \frac{1}{|x|}\right).
   \end{align}
\end{description}
 \item[Case D.] Suppose $\sigma>\frac{2}{n-2}$.  Let
 \begin{equation}\label{eq3.31}
  a:=\frac{\lambda}{n}[(n-2)\sigma-2] \quad\text{and} \quad 
b:=\frac{\alpha}{n}[(n-2)\sigma-2]+\beta.
 \end{equation}
 Then $0<a<1$.
 \begin{description}
 \item[(D1)] If either
 \begin{enumerate}
  \item $\frac{b}{1-a}<n-2$ and $(n-2)\lambda \leq n-\alpha$; or
  \item $\frac{b}{1-a}\geq n-2$ and $\frac{b\lambda}{1-a}<n-\alpha$
 \end{enumerate}
  then, as $x\to 0$, $u$ and $v$ satisfy \eqref{eq3.25} and \eqref{eq3.26}.
 \item[(D2)] If neither (i) nor (ii) holds then as $x\to0$
 \begin{equation}\label{eq3.32}
  u(x)=
  \begin{cases}
   o\left( \left(\frac{1}{|x|}\right)^{\frac{n-2}{n}[(n-2)\lambda+\alpha]} \right), &\text{ if }\frac{b}{1-a}<n-2\\
   o\left( \left(\frac{1}{|x|}\right)^{\frac{n-2}{n} (\frac{b\lambda}{1-a}+\alpha+\varepsilon)} \right), &\text{ if }\frac{b}{1-a}\geq n-2
  \end{cases}
 \end{equation}
 and
 \begin{equation}\label{eq3.33}
  v(x)=
  \begin{cases}
   O \left( \left(\frac{1}{|x|}\right)^{n-2} \right), &\text{ if }\frac{b}{1-a}<n-2\\
   o\left( \left(\frac{1}{|x|}\right)^{\frac{b}{1-a}+\varepsilon} \right), &\text{ if }\frac{b}{1-a}\geq n-2
  \end{cases}
 \end{equation}
 for all $\varepsilon>0$.
\end{description}
\end{description}
\end{thm}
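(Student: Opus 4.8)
The plan is to reduce \eqref{eq3.22}--\eqref{eq3.23} to a single closed nonlinear integral inequality and then run a bootstrap driven by the potential estimates of Section \ref{nonlin}. \emph{Step 1: superharmonic representation.} Since $-\Delta u\ge0$ and $-\Delta v\ge0$, both $u$ and $v$ are nonnegative superharmonic in some punctured ball $B_{2R}(0)\setminus\{0\}$. By the Brezis--Lions representation formula (Appendix \ref{secA}), after shrinking $R$ there are nonnegative functions $g_1:=-\Delta u$, $g_2:=-\Delta v$ in $L^1(B_R)$, nonnegative constants $m_1,m_2$, and functions $h_1,h_2$ harmonic (hence bounded) on $B_R$ with
\[
u=Ng_1+m_1\Gamma+h_1,\qquad v=Ng_2+m_2\Gamma+h_2\qquad\text{in }B_R\setminus\{0\},
\]
where $N$ is the Newtonian potential over $B_R$ and $\Gamma(x)=|x|^{-(n-2)}$. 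Shrinking $R$ so that $\Gamma\ge1$ on $B_R$ and absorbing the additive constants into $\Gamma$, I record
\[
u+|x|^{-(n-2)}\le C(Ng_1+\Gamma),\qquad v+|x|^{-(n-2)}\le C(Ng_2+\Gamma).
\]
Inserting these into \eqref{eq3.22}--\eqref{eq3.23} and using $(a+b)^s\le C_s(a^s+b^s)$ gives the coupled pointwise inequalities
\[
g_1\le C|x|^{-\alpha}\big((Ng_2)^\lambda+\Gamma^\lambda\big),\qquad
g_2\le C|x|^{-\beta}\big((Ng_1)^\sigma+\Gamma^\sigma\big).
\]

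\emph{Step 2: decoupling.} Substituting the second inequality into the first yields a single inequality for $g:=g_1$,
\[
g\le C|x|^{-\alpha}\Big(\big(N(|x|^{-\beta}(Ng)^\sigma)\big)^\lambda+N(|x|^{-\beta}\Gamma^\sigma)^\lambda+\Gamma^\lambda\Big).
\]
The last two terms are explicit powers of $|x|$ (possibly times a logarithm): $N$ applied to $|x|^{-\gamma}$ is bounded if $\gamma<n$, comparable to $|x|^{2-\gamma}$ if $\gamma>n$, and comparable to $\log\frac1{|x|}$ if $\gamma=n$, which is exactly the source of the dichotomies $\delta\le n$ versus $\delta>n$, $\beta\le n-(n-2)\sigma$ versus $>$, and so on, appearing in the statement. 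The genuinely nonlinear term $N(|x|^{-\beta}(Ng)^\sigma)$ is a weighted Havin--Maz'ya potential of exactly the type treated in Section \ref{nonlin}, and the four-way split on $\sigma$ in the theorem --- $\sigma=0$, $0<\sigma<\frac{2}{n-2}$, $\sigma=\frac{2}{n-2}$, $\sigma>\frac{2}{n-2}$ --- is precisely the split into the regimes in which this potential is controlled differently: trivially; via Hedberg's inequality; via Hedberg with a borderline logarithmic loss; and, for $\sigma\ge\frac{2}{n-2}$, via the sharp Wolff potential estimate of Corollary \ref{cor1}.

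\emph{Step 3: iteration and conclusion.} Starting from the crude fact that $g_1,g_2\in L^1(B_R)$, so that $u,v$ lie in weak-$L^{n/(n-2)}$ near the origin, a Moser-type iteration (Lemma \ref{lem4.6}) first produces an initial power bound $v(x)\le C|x|^{-\theta_0}$. The decoupled inequality of Step 2, pushed through the potential estimates of Section \ref{nonlin}, then sends a power bound for $g_1$ to an improved power bound, and the hypothesis $\sigma<\frac{2}{n-2}+\frac{n}{n-2}\frac1\lambda$ --- equivalently $a:=\frac{\lambda}{n}[(n-2)\sigma-2]<1$, which is precisely why $0<a<1$ is singled out in Case D --- is the contraction condition making this exponent recursion converge; its limit is encoded by the quantities $\delta$ of \eqref{eq3.24} and $a,b$ of \eqref{eq3.31}. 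Once a sharp bound for $g_1$, hence for $u=Ng_1+m_1\Gamma+h_1$, is obtained, I substitute it into $g_2\le C|x|^{-\beta}((Ng_1)^\sigma+\Gamma^\sigma)$ and apply the Section \ref{nonlin} estimates once more to $Ng_2$ to read off the stated bound for $v$. The passage from $O(|x|^{-(n-2)})$ to $o$ of a singular power reflects whether the relevant weighted potential is merely bounded or carries a genuine singularity --- in the latter case the $L^1(B_R)$-smallness of $g_1$ on small balls upgrades $O$ to $o$ --- while the loss of an arbitrary $\varepsilon$ in Case D is the usual price of a Moser iteration that cannot reach its endpoint exponent. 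The main obstacle is organizational rather than conceptual: the scheme must be carried out in each of the dozen-odd subcases of A--D, tracking how the weights $\alpha,\beta$ propagate and where logarithmic factors are forced (on the critical lines $\sigma=\frac{2}{n-2}$, $(n-2)\lambda+\alpha=n$, $\frac{b}{1-a}=n-2$, etc.); and in the critical cases one genuinely needs the two-sided sharpness of Corollary \ref{cor1}, not a crude bound, to land on exactly the exponents claimed.
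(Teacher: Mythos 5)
Your overall skeleton does match the paper's machinery (Brezis--Lions representation, the Moser-type iteration of Lemma \ref{lem4.6}, Corollary \ref{cor1} for $\sigma\ge\frac{2}{n-2}$, and in Case D an exponent recursion $p\mapsto ap+b$ with $0<a<1$), but Step 2 contains a genuine gap that is quantitative, not organizational. The asymptotics you assign to $N(|x|^{-\gamma})$ are wrong: over a ball containing the origin this potential is bounded only for $\gamma<2$, is comparable to $|x|^{2-\gamma}$ for $2<\gamma<n$, and is identically infinite for $\gamma\ge n$. More importantly, the exponents in Theorem \ref{thm3.7} are not produced by any such convolution asymptotics. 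When $-\Delta u=O(|x|^{-\gamma})$ with $\gamma>n$ the pointwise majorant is not even locally integrable, and the best convolution heuristic would give $|x|^{-(\gamma-2)}$, which is strictly weaker than the bound $o\bigl(|x|^{-\frac{n-2}{n}\gamma}\bigr)$ actually claimed. The mechanism in the paper (Lemma \ref{lem4.4}) is different: at points $x_j\to0$ one combines the pointwise bound $\lesssim|x_j|^{-\gamma}$ on $B_{|x_j|/2}(x_j)$ with the Brezis--Lions fact that $\int_{B_{|x_j|/2}(x_j)}(-\Delta u)\,dy\to0$, defines $r_j$ by mass balance ($r_j^n|x_j|^{-\gamma}=o(1)$), and obtains a local potential contribution $\lesssim|x_j|^{-\gamma}r_j^2=o\bigl(|x_j|^{-\frac{n-2}{n}\gamma}\bigr)$. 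In your plan the $L^1$-smallness is invoked only to ``upgrade $O$ to $o$,'' but it is what generates the exponent itself; without it your recursion lands on the wrong powers and cannot reach \eqref{eq3.27}, \eqref{eq3.29}, \eqref{eq3.32}, nor the $\frac{n-2}{n}$ factors in Cases A and B.

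A second, related gap is that your decoupled inequality features the weighted potential $N\bigl(|x|^{-\beta}(Ng)^\sigma\bigr)$ with a weight singular at the origin and with $g=-\Delta u$ a priori unbounded near $0$; Theorem \ref{thm1} and Corollary \ref{cor1} are unweighted and require $L^\infty$ data, so they do not apply to this global object. The paper sidesteps this by localizing at $x_j$ and rescaling to the unit ball, where $|y|^{-\alpha}$, $|y|^{-\beta}$ are frozen to constants comparable to $|x_j|^{-\alpha}$, $|x_j|^{-\beta}$ and the rescaled data $f_j,g_j$ are bounded with $\|f_j\|_{L^1}\to0$; Corollary \ref{cor1} and Lemma \ref{lem4.6} are applied only there. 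You would also need the paper's logical dichotomy: either $-\Delta u=O(|x|^{-n})$, in which case Lemma \ref{lem4.4}(i) gives \eqref{eq3.25}--\eqref{eq3.26} directly, or not, in which case Lemma \ref{lem4.5}(ii) (proved by iterating Lemma \ref{lem4.6} under hypothesis \eqref{eq3.21}) supplies a finite power $\gamma_1>n$ that seeds the Case C/D iteration via Lemma \ref{lem4.4}(iii). This step, which converts the a priori unquantified singularity of $-\Delta u$ into a usable starting bound, is only hinted at in your Step 3 and needs to be made the centerpiece for the argument to close.
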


\section{Nonlinear potentials}\label{nonlin}

In this section we are concerned with pointwise and integral estimates
of certain nonlinear potentials using inequalities of Hedberg type and
Wolff potential estimates (see \cite{AH}, \cite{Maz}). As a
consequence, we will prove the following theorem.

\begin{thm}\label{thm1} Let  $B=B_1(0)$ be the unit ball in $\mathbb{R}^n$, $n \ge 3$, and let 
\begin{equation} \label{ub}
N f(x) = \int_B \frac{f(y)}{|x-y|^{n-2}} dy, \quad x \in B. 
\end{equation}
Then, for all nonnegative functions $f \in L^\infty(B)$,
\begin{equation} \label{ub1}
|| N (( Nf)^\sigma) ||_{L^{\infty} (B)} \le C ||f||_{L^s(B)}^{\frac{2s (\sigma+1)}{n}} 
||f||_{L^\infty(B)}^{\frac{(n-2s)\sigma-2s}{n}}, 
\end{equation}
if $ \sigma> \frac{2}{n-2}$ and $0<s< \frac{n \sigma}{2(\sigma+1)}$, and  
\begin{equation} \label{ub2}
|| N (( Nf)^\sigma) ||_{L^{\infty} (B)} \le C ||f||_{L^s(B)}^{\sigma} \log 
\left ( \frac{C \, ||f||_{L^\infty(B)}}{||f||_{L^s (B)}} 
\right), 
\end{equation}
if $ \sigma\ge  \frac{2}{n-2}$ and $s= \frac{n \sigma}{2(\sigma+1)}$, where $C$ is a positive constant which does not depend on $f$. 
\end{thm}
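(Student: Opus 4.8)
The plan is to view $N\big((Nf)^\sigma\big)$ as a (truncated) Havin--Maz'ya potential of the measure $d\mu=f\,dy$, to dominate it pointwise by a truncated Wolff potential, and then to estimate that Wolff potential directly by a dyadic-type splitting together with an optimization of the splitting radius. A naive double use of Hedberg's inequality --- first bounding $Nf$ pointwise, then bounding $N$ of $(Nf)^\sigma$ --- does recover the estimate on part of the range, but it breaks down for small $s$ (where one of the intermediate Lebesgue exponents drops below $1$), so it is cleaner to route everything through the Wolff potential, which is also why the Wolff estimates of \cite{AH,Maz} appear.

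The first step, and the only non-elementary one, is the reduction. I would fix a nonnegative $f\in L^\infty(B)$, set $d\mu=f\,dy$ (a finite measure with $\operatorname{supp}\mu\subseteq\overline B$, so that $Nf$ is a fixed constant times the truncated Riesz potential $I_2\mu$), and put $p=1+\tfrac1\sigma$, so that $p'-1=\sigma$ and $n-2p=\tfrac{(n-2)\sigma-2}{\sigma}=:\gamma$; the hypothesis $\sigma\ge\tfrac{2}{n-2}$ is exactly $\gamma\ge 0$, with $\gamma=0$ only in the borderline case $\sigma=\tfrac{2}{n-2}$. The claim is that there is $C=C(n,\sigma)$ with
\[
 N\big((Nf)^\sigma\big)(x)\ \le\ C\int_0^2\Big(\frac{\mu(B_t(x))}{t^{\gamma}}\Big)^{\sigma}\,\frac{dt}{t},\qquad x\in B.
\]
This is where Hedberg-type inequalities and the localized domination of the Havin--Maz'ya potential by the (truncated) Wolff potential enter, cf. \cite{AH,Maz}; the one place that needs care is the borderline exponent $\sigma=\tfrac{2}{n-2}$ (equivalently $2p=n$), where one must work with the truncated Wolff potential, the untruncated one being infinite for a compactly supported $\mu$.

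Granting this, the rest is bookkeeping. For the density I would use $\mu(B_t(x))\le \omega_n\|f\|_{L^\infty(B)}\,t^{n}$ (valid for all $x$, $t>0$) together with $\mu(B_t(x))\le C\,\|f\|_{L^s(B)}\,t^{n/s'}$ when $s\ge 1$ (H\"older) and $\mu(B_t(x))\le \|f\|_{L^\infty(B)}^{\,1-s}\|f\|_{L^s(B)}^{\,s}$ when $0<s\le 1$ (from $f\le \|f\|_{L^\infty}^{\,1-s}f^{s}$ a.e.) --- it is precisely here that small $s$ is accommodated. Then split the integral at $\rho:=\big(\|f\|_{L^s(B)}/\|f\|_{L^\infty(B)}\big)^{s/n}$, which satisfies $\rho\le|B_1|^{1/n}<2$ since $\|f\|_{L^s}\le|B_1|^{1/s}\|f\|_{L^\infty}$. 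On $(0,\rho)$ the $L^\infty$ bound and the identity $(n-\gamma)\sigma=2(\sigma+1)$ give a contribution $\le C\|f\|_{L^\infty}^{\sigma}\rho^{2(\sigma+1)}$. On $(\rho,2)$ the $L^s$ bound makes the integrand a constant times a power of $t$: exponent $2\sigma+1-\tfrac{n\sigma}{s}$ when $s\ge 1$, which is negative (so the integral is controlled by its lower endpoint) exactly when $s<\tfrac{n\sigma}{2(\sigma+1)}$ and equals $-1$ (yielding $\log\tfrac2\rho$) when $s=\tfrac{n\sigma}{2(\sigma+1)}$; and exponent $-\gamma\sigma-1$ when $0<s\le 1$, which is integrable because $\gamma\sigma=(n-2)\sigma-2>0$ for $\sigma>\tfrac{2}{n-2}$, the remaining borderline $\sigma=\tfrac{2}{n-2}$ having $s=1$ and again producing $\log\tfrac2\rho$. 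In every case the choice of $\rho$ makes the far contribution comparable to the near one.

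Adding the two pieces and substituting $\rho$ gives $\int_0^2(\cdots)\tfrac{dt}{t}\le C\|f\|_{L^\infty}^{\sigma}\rho^{2(\sigma+1)}=C\,\|f\|_{L^s(B)}^{\frac{2s(\sigma+1)}{n}}\|f\|_{L^\infty(B)}^{\frac{(n-2s)\sigma-2s}{n}}$, using $\sigma-\tfrac{2s(\sigma+1)}{n}=\tfrac{(n-2s)\sigma-2s}{n}$; together with the reduction this is \eqref{ub1}. In the borderline case $s=\tfrac{n\sigma}{2(\sigma+1)}$ one has $\tfrac{2s(\sigma+1)}{n}=\sigma$, so the common value is $\|f\|_{L^s}^{\sigma}$, carrying the factor $\log\tfrac2\rho=\log 2+\tfrac sn\log\tfrac{\|f\|_{L^\infty}}{\|f\|_{L^s}}$, which is $\le C\log\big(C\|f\|_{L^\infty}/\|f\|_{L^s}\big)$ for a suitable $C=C(n,\sigma)$ because $\|f\|_{L^\infty}\ge|B_1|^{-1/s}\|f\|_{L^s}$; this is \eqref{ub2}. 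All constants depend only on $n,\sigma,s$ (and $|B_1|$), never on $f$. I expect the main obstacle to be the reduction step --- securing the pointwise domination by a truncated Wolff potential uniformly down to and including the borderline $\sigma=\tfrac2{n-2}$; once that is in place the rest is splitting, optimization, and exponent arithmetic.
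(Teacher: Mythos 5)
Your strategy stands or falls on the reduction step, and as stated that step is false on part of the range covered by the theorem. You claim a pointwise bound
\[
N\big((Nf)^\sigma\big)(x)\ \le\ C(n,\sigma)\int_0^2\Big(\frac{\mu(B_t(x))}{t^{\gamma}}\Big)^{\sigma}\,\frac{dt}{t},
\qquad \gamma=n-2p,\ p=1+\tfrac1\sigma ,
\]
i.e.\ domination of the Havin--Maz'ya potential by the (truncated) Wolff potential \emph{at the same point} $x$. This is available from \cite{AH}, \cite{Maz} only for $p>2-\frac{\alpha}{n}$, which with $\alpha=2$ means $\sigma<\frac{n}{n-2}$; the paper states explicitly (see the discussion after Theorem \ref{thm2}) that the upper bound \eqref{upper} fails for $1<p\le 2-\frac{\alpha}{n}$, i.e.\ for $\sigma\ge\frac{n}{n-2}$, which is squarely inside the hypothesis $\sigma>\frac{2}{n-2}$. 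The failure is not repaired by assuming $f\in L^\infty$: take $f=\chi_{B_\delta(y_0)}$ with $|x-y_0|\approx 1$ and $\delta$ small. Then $(Nf)^\sigma(y)\approx\delta^{n\sigma}|y-y_0|^{-(n-2)\sigma}$ away from $y_0$, and integrating the singularity gives $N((Nf)^\sigma)(x)\gtrsim\delta^{\,n+2\sigma}$, whereas your right-hand side is $\approx\delta^{\,n\sigma}$ since $\mu(B_t(x))=0$ for $t\lesssim 1$. For $\sigma>\frac{n}{n-2}$ one has $n+2\sigma<n\sigma$, so no constant $C(n,\sigma)$ can work. So the delicate point is not, as you suggest, the borderline $\sigma=\frac{2}{n-2}$ (where only the harmless truncation is needed), but the large-$\sigma$ end $\sigma\ge\frac{n}{n-2}$, where the pointwise Wolff domination must be replaced by a genuinely different input: either the uniform estimate \eqref{upper-phi} with $\phi(r)=\sup_{x}\mu(B_r(x))$, or the Adams--Meyer bounds of Theorem \ref{thm3}, which involve the $L^\infty$ bound $K$ of the potential and a different exponent. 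You cite none of these, so the key lemma of your proposal is unproved and, in the form written, false.

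The good news is that your subsequent computation never uses the pointwise value $\mu(B_t(x))$, only the $x$-uniform bounds $\mu(B_t(x))\le \omega_n\|f\|_{L^\infty}t^n$, $\mu(B_t(x))\le C\|f\|_{L^s}t^{n/s'}$ ($s\ge1$) and $\mu(B_t(x))\le\|f\|_{L^\infty}^{1-s}\|f\|_{L^s}^{s}$ ($s\le1$). Hence the repair is to replace $\mu(B_t(x))$ by $\phi(t)=\sup_x\mu(B_t(x))$ and to invoke Theorem \ref{thm2}(b) when $\sigma<\frac{n}{n-2}$ and Theorem \ref{thm2}(c) when $\sigma\ge\frac{n}{n-2}$; with that input your splitting at $\rho=(\|f\|_{L^s}/\|f\|_{L^\infty})^{s/n}$, the identity $(n-\gamma)\sigma=2(\sigma+1)$, and the exponent arithmetic leading to \eqref{ub1} and \eqref{ub2} are correct and match what the paper actually does: its proof of Theorem \ref{thm4} (from which Theorem \ref{thm1} follows with $\alpha=2$, $p=1+\frac1\sigma$) performs exactly this split of the Wolff-type integral at $R=a^{-s/n}$, using \eqref{upper} in the good range and \eqref{upper-phi} plus the reduction of $0<s<1$ to $s=1$ in the bad range. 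As submitted, however, the proposal has a genuine gap at its central step.
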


More precise pointwise estimates of $N (( Nf)^\sigma) $ in terms of
Wolff potentials
$$
\mathbf{W}_\sigma f (x)=\int_0^3  \left(\int_{B_r(x)} f(y) dy\right)^{\sigma} \frac{dr}{r^{(n-2)\sigma-1}}, \quad x \in B_1(0),  
$$
along with their analogues for functions $f$ defined on the entire space $\mathbb{R}^n$, and Riesz or Bessel potentials in place 
of $N f$, will be discussed below (see Theorems \ref{thm2}--\ref{thm4}). 

We remark that if $ \frac{2}{n-2}\le \sigma < \frac{n}{n-2}$, then for all nonnegative $f \in L^1(B_1(0))$, 
$$C^{-1} \mathbf{W}_\sigma f (x) \le N (( Nf)^\sigma)(x) \le C \mathbf{W}_\sigma f (x), \quad x \in B_1(0),$$
where $C$ is a positive constant which depends only on $\sigma$ and
$n$. There are similar pointwise estimates in the range $\frac{n}{n-2}
\le \sigma < \infty$ under the additional assumption that $N ((
Nf)^\sigma)(x) $ is uniformly bounded, for instance, if $f \in
L^\infty(B_1(0))$. These relations between nonlinear potentials $N ((
Nf)^\sigma)$ and $\mathbf{W}_\sigma f $ are due to Havin and Maz'ya,
D. Adams and Meyers (see \cite{AH}, \cite{Maz}).

\medskip

Let $\mu $ be a nonnegative Borel measure on $\mathbb{R}^n$.  For
$0<\alpha <n$, the Riesz potential $\I_{\alpha}\mu$ of order $\alpha$
is defined by
\begin{equation} \label{Rieszpote}
\I_{\alpha}\mu (x)=\int_0^{\infty} \frac{\mu(B_r(x))}{r^{n-\alpha}}\frac{dr}{r} =
 \frac{1}{n-\alpha} \int_{\mathbb{R}^n} \frac{d \mu (y)}{|x-y|^{n-\alpha}}, \quad x \in \mathbb{R}^n. 
\end{equation}
For $1<p< \infty$ and $0<\alpha < \frac n p$, the Wolff potential
$\mathbf{W}_{\alpha, p}\mu$ is defined by (see \cite{AH}, \cite{Maz}):
\begin{equation}\label{wolffpot}
\mathbf{W}_{\alpha, p}\mu(x)=\int_0^{\infty} \left(\frac{\mu(B_r(x))}{r^{n-\alpha p}}\right)^{\frac{1}{p-1}}\frac{dr}{r}, \quad x \in \mathbb{R}^n. 
 \end{equation}
 There is also a nonhomogeneous version applicable for $0<\alpha \le \frac n p$, 
 \begin{equation}\label{wolffpotc}
\mathbf{W^c}_{\alpha, p}\mu(x)=\int_0^{\infty} \left(\frac{\mu(B_r(x))}{r^{n-\alpha p}}\right)^{\frac{1}{p-1}} e^{-cr}\frac{dr}{r}, \quad x \in \mathbb{R}^n, 
 \end{equation}
 where $c>0$. Wolff potentials have numerous applications in analysis and PDE (see, for instance, \cite{KM}, \cite{Lab}, \cite{PV}, \cite{Ver}).

We will also use the Havin-Maz'ya potential $\mathbf{U}_{\alpha, p}\mu$, where $1<p<\infty$ and   $0<\alpha < \frac n p$, defined by:  
\begin{equation}\label{havinpot}
\mathbf{U}_{\alpha, p}\mu(x)= \I_\alpha (\I_\alpha \mu)^{\frac{1}{p-1}} (x), \quad x \in \mathbb{R}^n,   
 \end{equation}
along with its nonhomogeneous analogue $ \mathbf{V}_{\alpha, p}\mu$,  where  $1<p<\infty$ and $0<\alpha \le \frac n p$, defined by:  
\begin{equation}\label{havinbesselpot}
 \mathbf{V}_{\alpha, p}\mu(x)= \mathbf{J}_\alpha (\mathbf{J}_\alpha \mu)^{\frac{1}{p-1}} (x),  \quad x \in \mathbb{R}^n. 
 \end{equation}
Here Bessel potentials 
$$\J_\alpha \mu(x) =\int_{\mathbb{R}^n} G_\alpha(x-t) d \mu(t)$$
with Bessel kernels $G_\alpha$, $\alpha>0$,  are used in place of Riesz potentials $\I_\alpha \mu$. 
Clearly, $\J_\alpha \mu (x)\le c_{\alpha, n} \I_\alpha \mu(x)$, and hence $ \mathbf{V}_{\alpha, p}\mu(x) \le c_{\alpha, n}^{\frac 1{p-1}} \mathbf{U}_{\alpha, p}\mu(x)$ for all $x \in \mathbb{R}^n$. 

Note that the Newtonian potential coincides with 
$$ \I_2 \mu(x)= \mathbf{W}_{1,2} \mu(x) = c_n \mathbf{U}_{1,2} \mu (x), \quad n \ge 3.$$
 If $d\mu =f(x)dx$, where $f \ge 0$ and $f \in L^1_{\text{loc}}(\mathbb{R}^n)$, we will denote the corresponding potentials by $\I_{\alpha}f $,  $\mathbf{U}_{\alpha, p} f$, etc.

We will need the Hardy-Littlewood maximal function 
 $$
  \mathbf{M} f(x) = \sup_{r>0} \, \frac{1}{|B_r(x)|} {\int_{B_r(x)} |f(y)| \, d y}, \quad  x \in \mathbb{R}^n.  
$$

Throughout this section  $c$, $c_1$, $c_2$, etc., will stand for constants 
which depend only on $\alpha$, $p$, and $n$.  The following pointwise estimates of nonlinear potentials are due to Havin and Maz'ya, and D. R. Adams. 
(See, for instance, \cite[Sec. 10.4.2]{Maz} and the references therein.)

\begin{thm}\label{thm2} Let  $\mu $ be a locally finite nonnegative Borel measure on $\mathbb{R}^n$. 

\noindent (a) If $1<p<\infty$ and $0<\alpha\le \frac n p$, then, for some $c, c_1>0$, 
\begin{equation} \label{lower}
 \mathbf{V}_{\alpha, p}\mu(x) \ge c_1 \, \mathbf{W^{c}}_{\alpha, p}\mu(x)=
c_1  \int_0^{\infty} \left(\frac{\mu(B_r(x))}{r^{n-\alpha p}}\right)^{\frac{1}{p-1}}e^{-c r} \frac{dr}{r}, \quad x \in \mathbb{R}^n. 
\end{equation}
(b) If $2 - \frac \alpha n<p<\infty$ and $0<\alpha\le \frac n p$, then, for some $c, c_1>0$,  
\begin{equation} \label{upper}
 \mathbf{V}_{\alpha, p}\mu(x) \le c_1\, \mathbf{W^{c}}_{\alpha, p}\mu(x) =c_1 \, \int_0^{\infty} \left(\frac{\mu(B_r(x))}{r^{n-\alpha p}}\right)^{\frac{1}{p-1}}e^{-c r} \frac{dr}{r}, \quad x \in \mathbb{R}^n.
\end{equation}
({c})  If $1<p\le 2 - \frac \alpha n$, $0<\alpha< \frac n p$, and $\phi ({r}) = \sup_{x \in \mathbb{R}^n} \, \mu(B_r(x))$, then    
\begin{equation} \label{upper-phi}
 \mathbf{V}_{\alpha, p}\mu(x) \le c_1 \, \int_0^{\infty} \left(\frac{\phi({r})}{r^{n-\alpha p}}\right)^{\frac{1}{p-1}} e^{-c r}  \frac{dr}{r}, \quad x \in \mathbb{R}^n. 
\end{equation}
(d) The above estimates with $c=0$ hold for the potential $\mathbf{U}_{\alpha, p}\mu$ 
in place of $\mathbf{V}_{\alpha, p}\mu$ if $0<\alpha< \frac{n}{p}$. 
\end{thm}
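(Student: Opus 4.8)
The statement is classical --- these are the Havin-Maz'ya and D.\,R.~Adams pointwise comparisons between nonlinear potentials of Havin-Maz'ya type and Wolff potentials --- and a complete proof is carried out in \cite[Sec.~10.4.2]{Maz} (see also \cite{AH}), which we may simply invoke. For completeness I would organize a self-contained proof as follows. Fix $x\in\mathbb{R}^n$; by translation assume $x=0$. The model case is part (d), i.e.\ the Riesz/plain-Wolff version. Once one shows $c_1\,\mathbf{W}_{\alpha,p}\mu(0)\le \mathbf{U}_{\alpha,p}\mu(0)$ always, that $\mathbf{U}_{\alpha,p}\mu(0)\le c_1\,\mathbf{W}_{\alpha,p}\mu(0)$ when $p>2-\tfrac{\alpha}{n}$, and that $\mathbf{U}_{\alpha,p}\mu(0)\le c_1\int_0^\infty\big(\phi(r)/r^{n-\alpha p}\big)^{\frac1{p-1}}\tfrac{dr}{r}$ otherwise, the nonhomogeneous statements (a)--(c) follow by repeating the same scheme with $|z|^{\alpha-n}$ replaced by $G_\alpha$, using the two-sided estimates $c^{-1}|z|^{\alpha-n}\le G_\alpha(z)\le c\,|z|^{\alpha-n}$ for $|z|\le 1$ and $c^{-1}e^{-c|z|}\le G_\alpha(z)\le c\,e^{-c|z|}$ for $|z|\ge 1$: the exponential decay of $G_\alpha$ beyond scale $1$ both confines the essential part of the scale integral to $r\lesssim 1$ and, at the remaining scales, supplies the damping factor $e^{-cr}$ --- precisely the passage from $\mathbf{W}_{\alpha,p}$ to $\mathbf{W^c}_{\alpha,p}$. (This also explains why $\alpha=n/p$ is admitted only for the nonhomogeneous potentials, since $\mathbf{W}_{\alpha,p}\mu$ then diverges at infinity.)

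For the lower bound I would first note that for $y\in B_r(0)$ one has $B_r(0)\subset B_{2r}(y)$, whence
\[
\I_\alpha\mu(y)\ \ge\ c\int_{B_{2r}(y)}\frac{d\mu(z)}{|y-z|^{n-\alpha}}\ \ge\ c\,r^{\alpha-n}\,\mu\big(B_r(0)\big).
\]
Decomposing the outer integral over the dyadic annuli $A_j=\{2^{-j-1}\le|y|<2^{-j}\}$ and applying this on each $A_j$ with $r=2^{-j}$ (and $|A_j|\approx 2^{-jn}$) gives
\[
\mathbf{U}_{\alpha,p}\mu(0)\ =\ c\int_{\mathbb{R}^n}\frac{\big(\I_\alpha\mu(y)\big)^{\frac1{p-1}}}{|y|^{n-\alpha}}\,dy\ \ge\ c\sum_{j}\Big(\frac{\mu(B_{2^{-j}}(0))}{2^{-j(n-\alpha p)}}\Big)^{\frac1{p-1}}\ \approx\ c\,\mathbf{W}_{\alpha,p}\mu(0),
\]
the last step being the routine comparison of a dyadic sum with the integral defining $\mathbf{W}_{\alpha,p}\mu(0)$.

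For the upper bound I would fix $0$ and, for $y$ with $|y|\approx 2^{-j}$, split $\mu=\nu_j+\rho_j$ with $\nu_j=\mu|_{B_{2^{1-j}}(0)}$. On $\operatorname{supp}\rho_j$ one has $|y-z|\ge|z|/2$, so $\I_\alpha\rho_j(y)\le c\,T_j$ with $T_j:=\int_{\{|z|\ge 2^{1-j}\}}|z|^{\alpha-n}\,d\mu(z)$ independent of $y$, and the resulting tail contribution $\sum_j 2^{-j\alpha}T_j^{1/(p-1)}$ is dominated by $\mathbf{W}_{\alpha,p}\mu(0)$ by a discrete Hardy inequality. The heart of the matter is the local part, which reduces to the estimate
\[
\int_{B_R}\big(\I_\alpha\nu(y)\big)^{\frac1{p-1}}\,dy\ \le\ c\,R^{\,n-\frac{n-\alpha}{p-1}}\,\nu(\mathbb{R}^n)^{\frac1{p-1}}\qquad (\operatorname{supp}\nu\subset B_{2R}),
\]
obtained from Jensen's inequality applied to $t\mapsto t^{1/(p-1)}$ together with the elementary bound $\int_{B_{3R}(z)}|y-z|^{\frac{\alpha-n}{p-1}}dy\le c\,R^{\,n-\frac{n-\alpha}{p-1}}$, which is \emph{finite precisely when} $p>2-\tfrac{\alpha}{n}$; below this threshold the estimate --- and with it the comparison $\mathbf{U}_{\alpha,p}\mu\le c\,\mathbf{W}_{\alpha,p}\mu$ --- genuinely fails, as the example $\nu=\delta_{y_0}$ shows ($\mathbf{U}_{\alpha,p}\delta_{y_0}\equiv+\infty$ while $\mathbf{W}_{\alpha,p}\delta_{y_0}$ is finite off $y_0$). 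Feeding this into the dyadic sum with $\nu=\nu_j$ and $R\approx 2^{-j}$ reproduces $\sum_j(\mu(B_{2^{-j}}(0))/2^{-j(n-\alpha p)})^{1/(p-1)}\approx\mathbf{W}_{\alpha,p}\mu(0)$, giving (b) and (d). When $p\le 2-\tfrac{\alpha}{n}$ the displayed local estimate is false, and I would instead use the hypothesis $\nu_j(B_s(y))\le\phi(s)$ to bound $\I_\alpha\nu_j(y)=\int_0^\infty\nu_j(B_s(y))\,s^{\alpha-n}\tfrac{ds}{s}$ scale by scale; this replaces $\mu(B_r(\cdot))$ by $\phi(r)$ throughout and produces part (c).

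The main obstacle is the sharp local estimate above and verifying that $p>2-\tfrac{\alpha}{n}$ is exactly its range of validity (hence that of parts (b) and (d)); the remaining ingredients --- the discrete Hardy inequality for the tails $T_j$ and the bookkeeping to pass from $|z|^{\alpha-n}$ to $G_\alpha$ at scale $1$ --- are routine. As noted, all of this is done in \cite[Sec.~10.4.2]{Maz}, which we may cite directly.
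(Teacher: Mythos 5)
The paper offers no proof of this theorem: it simply attributes the estimates to Havin--Maz'ya and D.~R.~Adams and cites \cite[Sec.~10.4.2]{Maz} (and \cite{AH}), which is exactly what you do, so your proposal matches the paper's treatment. Your supplementary sketch is the standard argument and is essentially sound (one small caveat: the Jensen step in your local estimate applies when $p\le 2$, i.e.\ $\tfrac{1}{p-1}\ge 1$; for $p>2$ one uses H\"older/Fubini instead, though the conclusion and the threshold $p>2-\tfrac{\alpha}{n}$ are unaffected).
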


Theorem \ref{thm2} yields that the Wolff potential $ \mathbf{W}_{\alpha, p}\mu$ is pointwise equivalent to 
the Havin-Maz'ya potential  $ \mathbf{U}_{\alpha, p}\mu$ (and $\mathbf{W^c}_{\alpha, p}\mu$ is equivalent 
to $\mathbf{V}_{\alpha, p}\mu$ if $c>0$, up to a choice of  $c$), provided $2 - \frac \alpha n<p<\infty$. 

In the range $1<p\le 2 - \frac \alpha n$ (which excludes the critical case $\alpha=\frac n p$), 
the sharp upper estimate \eqref{upper} for $\mathbf{U}_{\alpha, p}\mu$ fails, along with its counterpart for  
$ \mathbf{V}_{\alpha, p}\mu$. However, there are natural substitutes  
under the additional assumption that the corresponding nonlinear potential is uniformly bounded. The following theorem is due to 
Adams and Meyer (see \cite[Sec. 10.4.2]{Maz}).

\begin{thm}\label{thm3} Suppose $\mathbf{V}_{\alpha, p}\mu(x) \le K$
  for all $ x \in \mathbb{R}^n$. 

\noindent (a) If $1<p<2 - \frac \alpha n$ and $0<\alpha < \frac n p$, then, for some $c, c_1>0$,  
\begin{equation} \label{upper1}
 \mathbf{V}_{\alpha, p}\mu(x) \le c_1 \, K^{\frac{(2-p)n-\alpha}{n-\alpha p}} \int_0^{\infty} \left(\frac{\mu(B_r(x))}{r^{n-\alpha p}}\right)^{\frac{n-1}{n-\alpha p}}e^{-c r} \frac{dr}{r}, \quad x \in \mathbb{R}^n. 
\end{equation}
(b) If $p=2 - \frac \alpha n$ and $0<\alpha< \frac n p$, then, for some $c, c_1, c_2>0$,   
\begin{equation} \label{upper2}
 \mathbf{V}_{\alpha, p}\mu(x) \le c_1 \, \int_0^{\infty} \left( \frac{\mu(B_r(x))}{r^{n-\alpha p}} 
 \log \left ( \frac {c_{2} K^{p-1} r^{n-\alpha p}}{\mu(B_r(x)} \right)  \right)^{\frac{1}{p-1}} e^{-c r} \frac{dr}{r}, \quad x \in \mathbb{R}^n.
\end{equation} 
 ({c}) The above estimates with $c=0$ hold for the potential $\mathbf{U}_{\alpha, p}\mu$ 
in place of $\mathbf{V}_{\alpha, p}\mu$. 
\end{thm}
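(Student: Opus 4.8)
Theorem~\ref{thm3} is classical --- it is essentially the Adams--Meyer refinement of the Havin--Maz'ya potential estimates, see \cite[Sec.~10.4.2]{Maz} --- so the plan below reconstructs that argument rather than finding a new one. I would first reduce to the Riesz version: by the exponential decay of the Bessel kernel $G_\alpha$, all radial integrations may be truncated to scales $r\lesssim1$, on which $\mathbf{J}_\alpha$ and the truncated $\mathbf{I}_\alpha$ are pointwise comparable, so it suffices to prove the estimates for (truncated) Riesz potentials --- this is essentially part~(c) --- and then reinstate the factors $e^{-cr}$. Fix $x_0\in\mathbb{R}^n$, abbreviate $h:=(\mathbf{I}_\alpha\mu)^{1/(p-1)}$, and work under the resulting hypothesis $\mathbf{I}_\alpha h\le K$; the goal is a pointwise bound for $\mathbf{I}_\alpha h(x_0)$. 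Two elementary consequences of the hypothesis, the first from bounding the Riesz kernel below on a ball and the second by Chebyshev's inequality, are used throughout:
\[
\int_{B_\rho(x)}h\,dy\le K\rho^{\,n-\alpha}
\qquad\text{and}\qquad
\big|\{y\in B_\rho(x):\mathbf{I}_\alpha\mu(y)>t\}\big|\le K\rho^{\,n-\alpha}\,t^{-1/(p-1)}
\]
for all $x\in\mathbb{R}^n$, $\rho>0$ and $t>0$.

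Next I would run the usual dyadic decomposition $\mathbf{I}_\alpha h(x_0)\asymp\sum_{k}2^{-k(n-\alpha)}\int_{A_k(x_0)}h\,dy$ over dyadic annuli $A_k(x_0)=\{2^{k-1}\le|x_0-y|<2^k\}$, which reduces everything to estimating the local integrals $\int_{B_r(x_0)}(\mathbf{I}_\alpha\mu(y))^{1/(p-1)}\,dy$ at each scale $r\lesssim1$. Split the inner potential as $\mathbf{I}_\alpha\mu=\mathbf{I}_\alpha(\mu\chi_{B_{4r}(x_0)})+\mathbf{I}_\alpha(\mu\chi_{B_{4r}(x_0)^c})$ and use $(a+b)^{1/(p-1)}\le C\big(a^{1/(p-1)}+b^{1/(p-1)}\big)$. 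On $B_r(x_0)$ the far term is comparable to the scale-$r$ tail $\int_r^\infty(\mu(B_s(x_0))/s^{n-\alpha})\,ds/s$, so after reassembling the scale sum the far contribution produces exactly the Wolff-type integral $\int_0^\infty(\mu(B_r(x_0))/r^{n-\alpha p})^{1/(p-1)}e^{-cr}\,dr/r$, just as in the good-range argument underlying Theorem~\ref{thm2}. Everything therefore comes down to the near term $\int_{B_r(x_0)}\big(\mathbf{I}_\alpha(\mu\chi_{B_{4r}(x_0)})(y)\big)^{1/(p-1)}\,dy$.

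The near term is exactly where the range $1<p\le2-\tfrac\alpha n$, i.e.\ $\tfrac1{p-1}\ge\tfrac n{n-\alpha}$, causes trouble: the Riesz potential of an $L^1$ mass need not lie in $L^{1/(p-1)}_{\mathrm{loc}}$, so the plain $L^1\!\to\!L^{1/(p-1)}$ bound that suffices in the good range diverges, and the hypothesis must be brought in. Writing $\nu=\mu\chi_{B_{4r}(x_0)}$, I would estimate $\int_{B_r(x_0)}(\mathbf{I}_\alpha\nu)^{1/(p-1)}\,dy$ by the layer-cake formula, bounding the distribution function $|\{y\in B_r(x_0):\mathbf{I}_\alpha\nu(y)>t\}|$ by the minimum of three bounds: the trivial volume bound $\asymp r^n$ (binding for small $t$); the weak-type $(1,\tfrac n{n-\alpha})$ bound $\lesssim(\nu(\mathbb{R}^n)/t)^{n/(n-\alpha)}$ (binding for intermediate $t$); and the Chebyshev bound $\lesssim Kr^{n-\alpha}t^{-1/(p-1)}$ from the displayed hypothesis (binding for large $t$, since $\tfrac n{n-\alpha}\le\tfrac1{p-1}$ there). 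Integrating $t^{1/(p-1)-1}$ against this piecewise bound shows that the near contribution at scale $r$ is controlled by a power of $K$ times $(\mu(B_{4r}(x_0))/r^{n-\alpha p})^{\gamma}$ for the appropriate $\gamma$; summing over $r=2^k$ and combining with the far contribution reassembles the stated integrals --- the power $K^{\frac{(2-p)n-\alpha}{n-\alpha p}}$ and the modified Wolff exponent $\tfrac{n-1}{n-\alpha p}$ when $p<2-\tfrac\alpha n$, and, at the endpoint $p=2-\tfrac\alpha n$ (where the $t$-integral degenerates to $\int\!\frac{dt}{t}$), the logarithmic factor $\log\!\big(c_2K^{p-1}r^{n-\alpha p}/\mu(B_r(x_0))\big)$ of part~(b). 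Part~(c) is the case $c=0$ already isolated in the reduction.

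The step I expect to be the real work is this last one: running the three-way level-set split uniformly in the scale $r$, handling the pieces that are only conditionally convergent through the truncation and localization, and keeping track of the exponent arithmetic precisely enough to land on the exact powers of $K$ and the exact logarithmic correction. This is routine in spirit but genuinely delicate, which is why --- the result being classical --- one would in practice simply cite \cite[Sec.~10.4.2]{Maz}.
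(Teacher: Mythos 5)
First, a point of reference: the paper contains no proof of Theorem \ref{thm3} --- it is quoted as a result of Adams and Meyers with the citation \cite[Sec.~10.4.2]{Maz} (and the paper's later arguments actually invoke Theorem \ref{thm2}({c}) rather than Theorem \ref{thm3}). So the citation you fall back on at the end is exactly what the paper itself does; there is no in-paper argument for your sketch to be compared with, and the only question is whether your reconstruction would work.

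As written, it would not: the gap sits precisely at the step you call ``the real work''. Put $q=\frac{1}{p-1}$ and note that $1<p\le 2-\frac{\alpha}{n}$ means $q\ge\frac{n}{n-\alpha}$. In your layer-cake estimate of the near term $\int_{B_r(x_0)}\left(\mathbf{I}_\alpha\nu\right)^{q}dy$, the three bounds you propose for $\lambda(t)=\bigl|\{y\in B_r(x_0):\mathbf{I}_\alpha\nu(y)>t\}\bigr|$ do not give a convergent $t$-integral: the Chebyshev bound decays exactly like $t^{-q}$, so $t^{q-1}\lambda(t)\lesssim K r^{n-\alpha}t^{-1}$ and the tail integral $\int^{\infty}\frac{dt}{t}$ diverges, while the weak-type bound decays only like $t^{-n/(n-\alpha)}$, which is no better (strictly worse when $p<2-\frac{\alpha}{n}$). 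Hence the minimum of your three bounds is not integrable against $t^{q-1}dt$ at infinity, and the computation yields $+\infty$ rather than the refined bound ``$K^{a}$ times a power of $\mu(B_{4r}(x_0))/r^{n-\alpha p}$''; in particular neither the factor $K^{\frac{(2-p)n-\alpha}{n-\alpha p}}$ in \eqref{upper1} nor the logarithm in \eqref{upper2} can emerge from this split. This is not an artifact of clumsy bounding: for $q\ge\frac{n}{n-\alpha}$ the localized potential $\mathbf{I}_\alpha\nu$ genuinely fails to lie in $L^{q}$ for atom-like $\nu$, so any proof must exploit the global hypothesis $\mathbf{V}_{\alpha,p}\mu\le K$ on the superlevel sets (e.g.\ by re-applying it at the small scales where the potential is large, as in the Adams--Meyers bootstrap), not merely through the single Chebyshev estimate at scale $r$. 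A further caution: you assert the exponent arithmetic ``lands on'' the stated exponents without carrying it out; a homogeneity check (replace $\mu$ by $\lambda\mu$ and $K$ by $\lambda^{1/(p-1)}K$) forces the Wolff-integrand exponent in \eqref{upper1} to be $\frac{n-\alpha}{n-\alpha p}$, which differs from the displayed $\frac{n-1}{n-\alpha p}$ unless $\alpha=1$ (the paper's statement appears to carry a typo), so this bookkeeping is not optional. The safe course, and the one the paper takes, is simply to cite \cite[Sec.~10.4.2]{Maz}.
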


We now deduce some pointwise bounds for nonlinear potentials. 

\begin{thm}\label{thm4} Let $p>1$ and $0<\alpha<n$. Then the
  following estimates hold. 
 
\noindent (a) If $2 -\frac \alpha n  < p < \infty$, $0<\alpha< \frac n p$, and $f \in L^s(\mathbb{R}^n)$, where $1 \le s <\frac n {\alpha p}$, then, for all $x \in \mathbb{R}^n$,  
\begin{equation} \label{est1}
 \mathbf{U}_{\alpha, p} f(x) \le c \, \left ( \mathbf{M} f(x)\right)^{\frac{n-\alpha p s}{(p-1)n}} 
 ||f||_{L^s(\mathbb{R}^n)}^{\frac{\alpha p s}{(p-1)n}}.  
\end{equation}
(b) If $1<p<\infty$,  $0<\alpha< \frac n p$, and $f \in L^s(\mathbb{R}^n) \cap L^\infty (\mathbb{R}^n)$, where  $0 < s < \frac n {\alpha p}$, then, for all $x \in \mathbb{R}^n$,   
\begin{equation} \label{est2}
 \mathbf{U}_{\alpha, p} f(x) \le c \, ||f||_{L^\infty(\mathbb{R}^n)}^{\frac{n-\alpha p s}{(p-1)n}} 
 ||f||_{L^s(\mathbb{R}^n)}^{\frac{\alpha p s}{(p-1)n}}.
\end{equation}
({c}) If  $2 -\frac \alpha n < p < \infty$, $0<\alpha \le \frac n p$, and $f \in L^s(\mathbb{R}^n)$, where 
$s =\frac n {\alpha p}$, then, for all $x \in \mathbb{R}^n$,  
\begin{equation} \label{est3}
\mathbf{V}_{\alpha, p}f (x) \le c   ||f||_{L^s(\mathbb{R}^n)}^{\frac{1}{p-1}}  
\left (\frac{(\mathbf{M}f(x))^{\frac{1}{p-1}}}{(\mathbf{M}f(x))^{\frac{1}{p-1}} + ||f||_{L^s(\mathbb{R}^n)}^{\frac{1}{p-1}}}+ \log^{+} \left( \frac{\mathbf{M} f(x)}{||f||_{L^s(\mathbb{R}^n)}} \right) \right).  
\end{equation}
({d}) If $1<p<\infty$, $0< \alpha \le \frac n p$, and $f \in L^s(\mathbb{R}^n) \cap L^\infty (\mathbb{R}^n)$, where $s =\frac n {\alpha p}$,  then, for all $x \in \mathbb{R}^n$,   
\begin{equation} \label{est4}
 \mathbf{V}_{\alpha, p} f(x) \le c   ||f||_{L^s(\mathbb{R}^n)}^{\frac{1}{p-1}} 
 \left ( \frac { ||f||_{L^\infty(\mathbb{R}^n)}^{\frac{1}{p-1}} }    { ||f||_{L^\infty(\mathbb{R}^n)}^{\frac{1}{p-1}} + ||f||_{L^s(\mathbb{R}^n)}^{\frac{1}{p-1}}}+ \log^{+} \left( \frac{ ||f||_{L^\infty(\mathbb{R}^n)} } {||f||_{L^s(\mathbb{R}^n)}} \right) \right). 
\end{equation}
\end{thm}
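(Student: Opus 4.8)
\emph{Proof strategy.} The plan is to reduce each of the four inequalities to the evaluation of a single integral over the scale variable $r>0$, and to carry it out by a Hedberg-type ``split and optimise'' argument. The extra hypothesis $f\in L^\infty$ in \eqref{est2} and \eqref{est4} is exactly what allows $p$ to reach the range $1<p\le 2-\frac\alpha n$ where the Wolff-potential comparison of Theorem \ref{thm2} is unavailable. For \eqref{est1}, since $p>2-\frac\alpha n$ and $0<\alpha<\frac np$, Theorem \ref{thm2}(b),(d) gives $\mathbf U_{\alpha,p}f(x)\le c\,\mathbf W_{\alpha,p}f(x)=c\int_0^\infty\bigl(r^{\alpha p-n}\int_{B_r(x)}f\bigr)^{1/(p-1)}\frac{dr}{r}$. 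Fixing $\rho>0$ and splitting at $\rho$: on $(0,\rho)$ one uses $\int_{B_r(x)}f\le c\,r^n\mathbf Mf(x)$, contributing $c\,(\mathbf Mf(x))^{1/(p-1)}\rho^{\alpha p/(p-1)}$; on $(\rho,\infty)$ one uses H\"older, $\int_{B_r(x)}f\le c\,\|f\|_{L^s}\,r^{\,n-n/s}$, contributing $c\,\|f\|_{L^s}^{1/(p-1)}\rho^{(\alpha p-n/s)/(p-1)}$, the tail being convergent precisely because $s<\frac n{\alpha p}$. Balancing the two terms forces $\rho^{n/s}=\|f\|_{L^s}/\mathbf Mf(x)$, and this choice of $\rho$ produces exactly \eqref{est1}. (For $s=1$ replace H\"older by $\int_{B_r(x)}f\le\|f\|_{L^1}$; the bookkeeping is unchanged.)

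The critical Bessel estimate \eqref{est3} follows the same template with two changes. Now $s=\frac n{\alpha p}$, so $n/s=\alpha p$, and Theorem \ref{thm2}(b) gives $\mathbf V_{\alpha,p}f(x)\le c\,\mathbf W^{c}_{\alpha,p}f(x)=c\int_0^\infty\bigl(r^{\alpha p-n}\int_{B_r(x)}f\bigr)^{1/(p-1)}e^{-cr}\frac{dr}{r}$. Splitting at $\rho$, the part $r<\rho$ again contributes $c\,(\mathbf Mf(x))^{1/(p-1)}\rho^{\alpha p/(p-1)}$, but H\"older on $r>\rho$ now yields $\int_{B_r(x)}f\le c\,\|f\|_{L^s}\,r^{\,n-\alpha p}$ with \emph{no} surviving power of $r$, so that part equals $c\,\|f\|_{L^s}^{1/(p-1)}\int_\rho^\infty e^{-cr}\frac{dr}{r}\le c\,\|f\|_{L^s}^{1/(p-1)}\bigl(1+\log^{+}\tfrac1\rho\bigr)$; here it is the damping factor built into $\mathbf W^{c}$ that makes the tail finite. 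One then separates two regimes: if $\mathbf Mf(x)\le\|f\|_{L^s}$, taking $\rho$ equal to a fixed constant kills the logarithm and bounds the small-$r$ part by $c\,(\mathbf Mf(x))^{1/(p-1)}$, giving the ``ratio'' summand of \eqref{est3}; if $\mathbf Mf(x)>\|f\|_{L^s}$, the balancing choice $\rho=(\|f\|_{L^s}/\mathbf Mf(x))^{1/(\alpha p)}<1$ bounds the small-$r$ part by $c\,\|f\|_{L^s}^{1/(p-1)}$ while $\log\tfrac1\rho=\tfrac1{\alpha p}\log\tfrac{\mathbf Mf(x)}{\|f\|_{L^s}}$, giving the $\log^{+}$ summand. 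Adding the two regimes is \eqref{est3}.

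For the $L^\infty$ estimates \eqref{est2} and \eqref{est4}, when $p>2-\frac\alpha n$ one invokes \eqref{est1}, \eqref{est3} together with $\mathbf Mf(x)\le\|f\|_{L^\infty}$ (their right-hand sides being increasing in $\mathbf Mf(x)$). When $1<p\le 2-\frac\alpha n$ I argue directly. For \eqref{est2} with $1<s<\frac n{\alpha p}$: Hedberg's inequality and $\mathbf Mf\le\|f\|_{L^\infty}$ give $\I_\alpha f\le c\,\|f\|_{L^\infty}^{1-\alpha s/n}\|f\|_{L^s}^{\alpha s/n}=:M_0$ pointwise, while Hardy--Littlewood--Sobolev gives $\|\I_\alpha f\|_{L^{q_1}}\le c\,\|f\|_{L^s}$ with $\frac1{q_1}=\frac1s-\frac\alpha n$; writing $h=(\I_\alpha f)^{1/(p-1)}$ and observing that $(p-1)q_1<\frac n\alpha$ is equivalent to $s<\frac n{\alpha p}$, one splits $\mathbf U_{\alpha,p}f=\I_\alpha h$ at $\rho$ (near part $\le c\,M_0^{1/(p-1)}\rho^\alpha$, far part $\le c\,\|f\|_{L^s}^{1/(p-1)}\rho^{\alpha-n/((p-1)q_1)}$) and optimises in $\rho$ to recover \eqref{est2}. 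The cases $0<s<1$ and $s=1$ reduce to this by the elementary bounds $\|f\|_{L^1}\le\|f\|_{L^\infty}^{1-s}\|f\|_{L^s}^{s}$ and $\|f\|_{L^{s_0}}\le\|f\|_{L^1}^{1/s_0}\|f\|_{L^\infty}^{1-1/s_0}$ (with $1<s_0<\frac n{\alpha p}$). Estimate \eqref{est4} is the analogue with $\J_\alpha$ replacing $\I_\alpha$: Hedberg gives $\J_\alpha f\le c\,\|f\|_{L^\infty}^{1-1/p}\|f\|_{L^s}^{1/p}$ and HLS gives $\J_\alpha f\in L^{q_1}$ with $q_1=\frac n{\alpha(p-1)}$, so $g:=(\J_\alpha f)^{1/(p-1)}$ lies in $L^{n/\alpha}\cap L^\infty$ with $\|g\|_{L^{n/\alpha}}\le c\,\|f\|_{L^s}^{1/(p-1)}$; splitting $\mathbf V_{\alpha,p}f=\J_\alpha g$ over $|x-y|<\rho$, $\rho\le|x-y|<1$ and $|x-y|\ge1$, the middle piece equals $c\,\|g\|_{L^{n/\alpha}}(\log\tfrac1\rho)^{(n-\alpha)/n}$ since H\"older at the critical exponent $\frac n\alpha$ produces $\int_{\rho\le|z|<1}|z|^{-n}\,dz=c\log\tfrac1\rho$, the outer piece is $\le c\,\|g\|_{L^{n/\alpha}}$ by the exponential decay of the Bessel kernel, and optimising $\rho$ together with $(\log^{+}t)^{(n-\alpha)/n}\le 1+\log^{+}t$ gives \eqref{est4}; the boundary case $\alpha=\frac np$ uses the weak-type form of HLS.

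The genuinely delicate step is the bookkeeping in the critical estimates \eqref{est3} and \eqref{est4}: one must track how the optimal cutoff $\rho$ depends on the ratio $\mathbf Mf(x)/\|f\|_{L^s}$ (respectively $\|f\|_{L^\infty}/\|f\|_{L^s}$), split the analysis into a ``small ratio'' and a ``large ratio'' regime, and verify that the two resulting bounds assemble into precisely the stated ``ratio plus $\log^{+}$'' form. The subcritical estimates \eqref{est1}, \eqref{est2} and the reductions of the corner cases $s=1$ and $0<s<1$ are then routine.
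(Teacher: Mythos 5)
Your treatment of \eqref{est1}, of \eqref{est2}, and of the regime $\mathbf{M}f(x)\ge\|f\|_{L^s(\mathbb{R}^n)}$ in \eqref{est3} is sound. For \eqref{est1} and \eqref{est3} it is essentially the paper's argument: compare with the (damped) Wolff potential via Theorem \ref{thm2}(b),(d), bound $\mu(B_r(x))$ by $c\,r^n\mathbf{M}f(x)$ for small $r$ and by H\"older for large $r$, and optimize the cutoff. Your Hedberg--Sobolev route to \eqref{est2} in the range $1<p\le 2-\frac\alpha n$ (where the exponent bookkeeping does check out) is a correct alternative to the paper's device, which is instead the Adams--Meyers estimate \eqref{upper-phi} of Theorem \ref{thm2}(c) with $\phi(r)=\sup_x\mu(B_r(x))\le c\min\{r^n\|f\|_{L^\infty},\,r^{n(1-1/s)}\|f\|_{L^s}\}$; that estimate buys the same conclusion with the identical one-variable computation and no recourse to Sobolev embedding.

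The gap is in the ``small ratio'' regimes of the two critical estimates, i.e.\ exactly the ratio summands you flag as the delicate point. For \eqref{est3}: when $\mathbf{M}f(x)\le\|f\|_{L^s}$ the right-hand side is comparable to $(\mathbf{M}f(x))^{\frac1{p-1}}$, but with your fixed cutoff the tail, which you estimate by H\"older, contributes $c\,\|f\|_{L^s}^{\frac1{p-1}}$; this is not dominated by the ratio term, so as written you only get the weaker bound $c\,\|f\|_{L^s}^{\frac1{p-1}}\bigl(1+\log^{+}\frac{\mathbf{M}f(x)}{\|f\|_{L^s}}\bigr)$. The repair is the paper's: keep the cutoff $R=a^{-s/n}\ge1$, $a=\mathbf{M}f(x)/\|f\|_{L^s}$, and use the damping $e^{-cr}$, so the tail is $\lesssim e^{-cR}\|f\|_{L^s}^{\frac1{p-1}}\lesssim(\mathbf{M}f(x))^{\frac1{p-1}}$ (equivalently, in this regime bound the whole damped integral by the maximal-function estimate alone). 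The same defect is more serious in \eqref{est4} for $1<p\le2-\frac\alpha n$: when $\|f\|_{L^\infty}\ll\|f\|_{L^s}$ the right-hand side of \eqref{est4} is comparable to $\|f\|_{L^\infty}^{\frac1{p-1}}$, while in your three-piece splitting of $\J_\alpha g$, $g=(\J_\alpha f)^{\frac1{p-1}}$, the middle and outer pieces are controlled only by $\|g\|_{L^{n/\alpha}}\lesssim\|f\|_{L^s}^{\frac1{p-1}}$ and the near piece by Hedberg's bound $\|g\|_{L^\infty}\lesssim\|f\|_{L^\infty}^{\frac1p}\|f\|_{L^s}^{\frac1{p(p-1)}}$; both exceed $\|f\|_{L^\infty}^{\frac1{p-1}}$ by a factor that blows up as $\|f\|_{L^\infty}/\|f\|_{L^s}\to0$, so this decomposition cannot yield the stated ratio term. (It can be rescued ad hoc, e.g.\ $\J_\alpha f\le\|G_\alpha\|_{L^1}\|f\|_{L^\infty}$ gives $\mathbf{V}_{\alpha,p}f\le c\|f\|_{L^\infty}^{\frac1{p-1}}$ outright in that regime, but the clean route, and the paper's, is again \eqref{upper-phi}: both competing bounds on $\phi(r)$ sit inside one damped $r$-integral, and the computation of part (c) with $\mathbf{M}f$ replaced by $\|f\|_{L^\infty}$ gives \eqref{est4}.) A minor further point: if $\alpha=\frac np$ then necessarily $p>2-\frac\alpha n$, so the boundary case you propose to treat by weak-type HLS never occurs in the range where your direct argument is invoked.
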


\begin{proof} Suppose first that $2 - \frac \alpha n <p<\infty$. Fix $R>0$, and let $d \mu = f(x) dx$, where 
$f \ge 0$ and $f \in L^s_{\text{loc}}(\mathbb{R}^n)$, $s \ge 1$.  
 Then by \eqref{upper}, $ \mathbf{V}_{\alpha, p}\mu(x)$ is bounded 
above by a constant multiple of  
$$
\mathbf{W^{c}}_{\alpha, p} f(x)=\int_0^{\infty} \left( \frac{\mu(B_r(x))} {r^{n-\alpha p}} \right)^{\frac{1}{p-1}}e^{-c r} \frac{dr}{r}
$$
$$ 
= \int_0^{R} \left(\frac{\mu(B_r(x))}{r^{n-\alpha p}}\right)^{\frac{1}{p-1}}e^{-c r} \frac{dr}{r} 
+ \int_R^{\infty} \left(\frac{\mu(B_r(x))}{r^{n-\alpha p}}\right)^{\frac{1}{p-1}}e^{-c r} \frac{dr}{r}=I_1+I_2.
$$
Note that 
$$
\frac{\mu(B_r(x))}{r^{n-\alpha p}} \le c_n r^{\alpha p} \mathbf{M}f(x).   
$$
Hence,
$$
I_1 \le C  \mathbf{M} f(x)^{\frac{1}{p-1}}  \int_0^{R} r^{\frac{\alpha p}{p-1}}e^{-c r} \frac{dr}{r}.  
$$
To estimate $I_2$, notice that by H\"older's inequality with $s \ge 1$, 
$$
\mu(B_r(x)) \le ||f||_{L^s(\mathbb{R}^n)} c_n \, r^{n(1-\frac 1 s)}, \quad x \in \mathbb{R}^n, \, \, r>0. 
$$
Hence, 
$$
I_2\le C ||f||_{L^s(\mathbb{R}^n)}^{\frac{1}{p-1}} \int_{R}^{\infty} r^{-\frac{n-\alpha p s}{s(p-1)}} e^{-c r} \frac{dr}{r}.
$$
Letting $a = \frac{\mathbf{M}f(x)}{||f||_{L^s(\mathbb{R}^n)}}$, and combining the preceding inequalities, we obtain
 \begin{equation} \label{upperd1}
\mathbf{V}_{\alpha, p} f (x) \le C ||f||_{L^s(\mathbb{R}^n)}^{\frac{1}{p-1}} \left (a^{\frac{1}{p-1}}  \int_0^{R} r^{\frac{\alpha p}{p-1}}e^{-c r} \frac{dr}{r} 
+   \int_{R}^{\infty} r^{-\frac{n-\alpha p s}{s(p-1)}} e^{-c r} \frac{dr}{r} \right).
\end{equation} 
 Minimizing the right-hand side over $R$ gives, with $R=a^{-\frac{s}{n}}$, 
 \begin{equation} \label{upperd2}
\mathbf{V}_{\alpha, p}f (x) \le C  ||f||_{L^s(\mathbb{R}^n)}^{\frac{1}{p-1}} \left (a^{\frac{1}{p-1}}  \int_0^{a^{-\frac{s}{n}}} r^{\frac{\alpha p}{p-1}}e^{-c r} \frac{dr}{r} 
+   \int_{a^{-\frac{s}{n}}}^{\infty} r^{-\frac{n-\alpha p s}{s(p-1)}} e^{-c r} \frac{dr}{r}\right).
\end{equation} 

As noted above, the preceding inequality holds for $\mathbf{U}_{\alpha, p}f$ in place of $\mathbf{V}_{\alpha, p}f $ if we set $c=0$: 
$$
\mathbf{U}_{\alpha, p}f (x) \le C  ||f||_{L^s(\mathbb{R}^n)}^{\frac{1}{p-1}} \left (a^{\frac{1}{p-1}}  \int_0^{a^{-\frac{s}{n}}} r^{\frac{\alpha p}{p-1}} \frac{dr}{r} 
+   \int_{a^{-\frac{s}{n}}}^{\infty} r^{-\frac{n-\alpha p s}{s(p-1)}}  \frac{dr}{r}\right)   
$$
$$ 
=C_1 ||f||_{L^s(\mathbb{R}^n)}^{\frac{1}{p-1}}  \, a^{\frac{n-\alpha p s}{(p-1)n}}= C_1  \left ( \mathbf{M} f(x)\right)^{\frac{n-\alpha p s}{(p-1)n}} 
 ||f||_{L^s(\mathbb{R}^n)}^{\frac{\alpha p s}{(p-1)n}}, 
$$
provided $p> 2 - \frac \alpha n$, $0<\alpha < \frac n p$ and $1\le s < \frac n {\alpha p}$.  This proves  statement (a) of Theorem \ref{thm4}. 

If $f \in L^s(\mathbb{R}^n) \cap 
L^\infty (\mathbb{R}^n)$, then noticing that $\mathbf{M} f (x) \le ||f||_{L^\infty(\mathbb{R}^n)}$ for every $x \in \mathbb{R}^n$, we deduce from this
 a cruder estimate: 
\begin{equation} \label{upperc}
\mathbf{U}_{\alpha, p}f (x) \le C_1   ||f||_{L^\infty(\mathbb{R}^n)}^{\frac{n-\alpha p s}{(p-1)n}} 
 ||f||_{L^s(\mathbb{R}^n)}^{\frac{\alpha p s}{(p-1)n}}.  
\end{equation} 

In the case $1<p \le 2 - \frac \alpha n$, we have $ \alpha  \le (2-p) n  <\frac n p$. Using 
 \eqref{upper-phi} instead of \eqref{upper}, we see that  \eqref{upperc}  
 still holds. Therefore \eqref{upperc} holds for all $1<p<\frac{n}{\alpha}$ and $1\le s < \frac n {\alpha p}$.  
 
 If $0<s<1$, then obviously 
$||f||_{L^1(\mathbb{R}^n)}\le  ||f||_{L^\infty(\mathbb{R}^n)}^{1-s} \, ||f||_{L^s(\mathbb{R}^n)}^s$. 
Consequently,  \eqref{upperc}   for $0<s<1$ follows from the corresponding 
estimate with $s=1$, which  proves  statement (b) of Theorem \ref{thm4}. 

 To prove statement ({c}), notice that in the case $s =\frac{n}{\alpha p }\ge 1$ and $p> 2 - \frac \alpha n$, 
 we clearly have, by looking at the cases $0<a<1$ and $a \ge 1$,  
 $$
 a^{\frac{1}{p-1}}  \int_0^{a^{-\frac{s}{n}}} r^{\frac{\alpha p}{p-1}}e^{-c r} \frac{dr}{r} 
+   \int_{a^{-\frac{s}{n}}}^{\infty}  e^{-c r} \frac{dr}{r} \le C \left ( \left (\frac{a}{a+1}\right)^{\frac{1}{p-1}}  + \log^{+} a \right). 
 $$
Hence  \eqref{upperd2} yields 
\begin{equation} \label{upperd}
 \mathbf{V}_{\alpha, p}f (x) \le c   ||f||_{L^s(\mathbb{R}^n)}^{\frac{1}{p-1}}  
\left (\frac{(\mathbf{M}f(x))^{\frac{1}{p-1}}}{(\mathbf{M}f(x))^{\frac{1}{p-1}} + ||f||_{L^s(\mathbb{R}^n)}^{\frac{1}{p-1}}}+ \log^{+} \left( \frac{\mathbf{M} f(x)}{||f||_{L^s(\mathbb{R}^n)}} \right) \right), 
\end{equation}
for $s =\frac{n}{\alpha p}\ge 1$ and $p> 2 - \frac \alpha n$, which proves statement ({c}).  
 
 In the case $s =\frac{n}{\alpha p}>1$ and $1<p \le 2 - \frac \alpha n$, as in the 
 estimates of  $\mathbf{U}_{\alpha, p}f$ above, we again use \eqref{upper-phi} in place of \eqref{upper}, 
 together with $\mathbf{M}f(x)\le ||f||_{L^\infty(\mathbb{R}^n)}$, to complete the proof of statement (d). 
 \end{proof}

 \begin{rmk} {\rm Inequality \eqref{est2} can be deduced directly
     applying Hedberg's inequality (see \cite{AH}, Proposition
     3.1.2(a)) to $\mathbf{I}_\alpha g$ where $g=(\mathbf{I}_\alpha
     f)^{\frac 1{p-1}}$, followed by Sobolev's inequality.}
\end{rmk}

\begin{rmk} {\rm Theorem \ref{thm1} is immediate from statements (b) and (d) of Theorem \ref{thm4} 
with $\alpha=2$ and $p=1+ \frac 1 \sigma$.} 
 \end{rmk}

We will need the following corollary of Theorem \ref{thm1} in the next
section. 

\begin{cor}\label{cor1}
  Let $g\in L^\infty(B)$ be a nonnegative function where $B=B_R(x_0)$ is a ball
  in $\mathbb{R}^n$, $n\ge 3$, and let
\[
Ng(x)=\int_B\frac{g(y)}{|x-y|^{n-2}}\,dy,\quad x\in B.
\]
Then for $\sigma\ge\frac{2}{n-2}$ we have
\[
\|N((Ng)^\sigma)\|_{L^\infty(B)}\le\begin{cases}
C \|g\|_{L^1(B)}^{\frac{2\sigma+2}{n}} 
||g||_{L^\infty(B)}^{\frac{(n-2)\sigma-2}{n}},&\text{if $\sigma>\frac{2}{n-2}$}\\
C\|g\|_{L^1(B)}^{\sigma} \log 
\left( \frac{C \,|B|\, ||g||_{L^\infty(B)}}{||g||_{L^1 (B)}}\right),
&\text{if $\sigma=\frac{2}{n-2}$} 
\end{cases}
\]
where $C=C(n,\sigma)$ is a positive constant.
\end{cor}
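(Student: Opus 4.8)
The plan is to derive Corollary \ref{cor1} from Theorem \ref{thm1} by a rescaling argument that reduces the ball $B_R(x_0)$ to the unit ball $B_1(0)$ and that converts the $L^s$-norm appearing in \eqref{ub1}--\eqref{ub2} (with $0<s<\frac{n\sigma}{2(\sigma+1)}$ or $s=\frac{n\sigma}{2(\sigma+1)}$) into the $L^1$-norm that we actually want. Note that for $\sigma\ge\frac{2}{n-2}$ one has $\frac{n\sigma}{2(\sigma+1)}\ge 1$, so the choice $s=1$ is admissible: it satisfies $0<s<\frac{n\sigma}{2(\sigma+1)}$ precisely when $\sigma>\frac{2}{n-2}$ (strict inequality), and $s=\frac{n\sigma}{2(\sigma+1)}=1$ exactly when $\sigma=\frac{2}{n-2}$. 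This is the reason the two cases of the corollary line up with the two cases of Theorem \ref{thm1}.

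First I would handle the translation and dilation. Translation by $x_0$ is harmless since the Newtonian kernel $|x-y|^{-(n-2)}$ is translation invariant, so we may assume $x_0=0$ and $B=B_R(0)$. For the dilation, set $\tilde g(z)=g(Rz)$ for $z\in B_1(0)$; a change of variables $y=Rz$ gives $Ng(x) = R^2 \,(\widetilde{N}\tilde g)(x/R)$, where $\widetilde N$ denotes the Newtonian potential operator over $B_1(0)$ as in \eqref{ub}. Iterating, $N((Ng)^\sigma)(x) = R^{2\sigma+2}\,\widetilde N\big((\widetilde N\tilde g)^\sigma\big)(x/R)$, hence
\[
\|N((Ng)^\sigma)\|_{L^\infty(B_R(0))} = R^{2\sigma+2}\,\big\|\widetilde N\big((\widetilde N\tilde g)^\sigma\big)\big\|_{L^\infty(B_1(0))}.
\]
Now apply Theorem \ref{thm1} to $\tilde g$ on $B_1(0)$ with $s=1$. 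We also record the scaling of the norms: $\|\tilde g\|_{L^1(B_1(0))} = R^{-n}\|g\|_{L^1(B_R(0))}$ and $\|\tilde g\|_{L^\infty(B_1(0))} = \|g\|_{L^\infty(B_R(0))}$.

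Next I would just substitute and check that all powers of $R$ cancel. In the case $\sigma>\frac{2}{n-2}$, \eqref{ub1} with $s=1$ reads $\big\|\widetilde N((\widetilde N\tilde g)^\sigma)\big\|_{L^\infty} \le C \|\tilde g\|_{L^1}^{\frac{2(\sigma+1)}{n}}\|\tilde g\|_{L^\infty}^{\frac{(n-2)\sigma-2}{n}}$; multiplying by $R^{2\sigma+2}$ and inserting the norm scalings produces the factor $R^{2\sigma+2}\cdot R^{-n\cdot\frac{2(\sigma+1)}{n}} = R^{2\sigma+2}\cdot R^{-2(\sigma+1)} = R^0 = 1$, giving exactly the first line of the corollary. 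In the case $\sigma=\frac{2}{n-2}$, \eqref{ub2} with $s=1$ gives $\big\|\widetilde N((\widetilde N\tilde g)^\sigma)\big\|_{L^\infty}\le C\|\tilde g\|_{L^1}^\sigma \log\!\big(C\|\tilde g\|_{L^\infty}/\|\tilde g\|_{L^1}\big)$; multiplying by $R^{2\sigma+2}$ and using $2\sigma+2 = n\sigma$ when $\sigma=\frac{2}{n-2}$ shows $R^{2\sigma+2}\|\tilde g\|_{L^1}^\sigma = R^{n\sigma}\cdot R^{-n\sigma}\|g\|_{L^1}^\sigma = \|g\|_{L^1}^\sigma$, while inside the logarithm $\|\tilde g\|_{L^\infty}/\|\tilde g\|_{L^1} = R^n\|g\|_{L^\infty}/\|g\|_{L^1} = |B_1(0)|^{-1}|B_R(0)|\,\|g\|_{L^\infty}/\|g\|_{L^1}$, so after absorbing the dimensional constant $|B_1(0)|^{-1}$ into $C$ we obtain the stated $\log\big(C|B|\,\|g\|_{L^\infty}/\|g\|_{L^1}\big)$. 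The constant $C$ depends only on $n$ and $\sigma$, as required.

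The argument is essentially bookkeeping, so there is no deep obstacle; the one point deserving care is the arithmetic identity $2\sigma+2 = n\sigma$ at the endpoint $\sigma=\frac{2}{n-2}$ (equivalently $(n-2)\sigma=2$), which is what makes the logarithmic case scale correctly, together with verifying that $s=1$ is a legitimate choice in Theorem \ref{thm1} in each case — i.e. that $\frac{n\sigma}{2(\sigma+1)}>1$ when $\sigma>\frac{2}{n-2}$ and $=1$ when $\sigma=\frac{2}{n-2}$. Both follow from solving $\frac{n\sigma}{2(\sigma+1)}\ge 1 \iff (n-2)\sigma\ge 2$. One should also note that $g\in L^\infty(B)$ with $B$ bounded implies $g\in L^1(B)$, so all the norms on the right-hand side are finite and the statement is meaningful.
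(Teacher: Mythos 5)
Your proposal is correct and is exactly the paper's proof — the paper simply says ``apply Theorem \ref{thm1} with $s=1$ to $f(x)=g(x_0+Rx)$,'' and your argument supplies the bookkeeping (the scaling $Ng(x)=R^2(\widetilde N\tilde g)(x/R)$, the norm conversions, the cancellation of powers of $R$ via $2\sigma+2=n\sigma$ at the endpoint, and the admissibility check for $s=1$) that the authors left implicit.
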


\begin{proof}
Apply Theorem \ref{thm1} with $s=1$ to the function $f(x)=g(x_0+Rx)$.
\end{proof}

\section{Preliminary lemmas}\label{sec4}
In this section we provide some lemmas needed for the proofs of our
results in Sections \ref{sec2} and \ref{sec3}. 

\begin{lem}\label{lem4.1}
  Let $\varphi:(0,1)\to(0,1)$ be a continuous
  function such that $\lim_{r\to0^{+}} \varphi(r)=0$.  Let $\{ x_j
  \}^{\infty}_{j=1}$ be a sequence in $\mathbb{R}^n$, where $n\geq 3$
  (resp. $n=2$), such that
 \begin{equation}\label{eq4.1}
  0<4|x_{j+1}|<|x_j|<\frac{1}{2}
 \end{equation}
and
 \begin{equation}\label{eq4.2}
  \sum^{\infty}_{j=1}\varphi(|x_j|)<\infty.
 \end{equation}
 Let $\{r_j \}^{\infty}_{j=1} \subset \mathbb{R}$ be a sequence satisfying
 \begin{equation}\label{eq4.3}
  0<r_j \leq |x_j| /2.
 \end{equation}
 Then there exist a positive constant $A=A(n)$ and a positive function
 $u\in C^\infty (\Omega\backslash \{0\})$ where $\Omega=\mathbb{R}^n$ 
(resp. $\Omega=B_2 (0)\subset \mathbb{R}^2$) such that
 \begin{equation}\label{eq4.4}
  0\leq -\Delta u\leq \frac{\varphi(|x_j |)}{r_{j}^n} \quad\text{in}
\quad B_{r_j} (x_j)
 \end{equation}
 \begin{equation}\label{eq4.5}
  -\Delta u=0\quad \text{ in }\quad\Omega \backslash \left( \{0\} \cup \bigcup^{\infty}_{j=1} B_{r_j} (x_j) \right)
 \end{equation}
 \begin{equation}\label{eq4.6}
  u\geq \frac{A\varphi(|x_j |)}{r_{j}^{n-2}}\quad\left( \text{resp. }
    u\geq A\varphi(|x_j |)\log \frac{1}{r_j} \right) \quad\text{ in } \quad
B_{r_j} (x_j )
 \end{equation}
 \begin{equation}\label{eq4.7}
  u\geq 1\quad\text{ in }\Omega\backslash \{0\}.
 \end{equation}
\end{lem}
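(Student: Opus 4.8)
The plan is to build $u$ by superposing fundamental-solution-type bumps, one supported in each ball $B_{r_j}(x_j)$, plus a harmless harmonic background, and to exploit the rapid geometric decay \eqref{eq4.1} together with the summability \eqref{eq4.2} to control the interactions. Concretely, for each $j$ I would fix a cutoff $\eta_j\in C_0^\infty(B_{r_j}(x_j))$ with $\eta_j\equiv 1$ on $B_{r_j/2}(x_j)$, $0\le\eta_j\le 1$, and $|\Delta\eta_j|\le C/r_j^2$, and define a nonnegative function
\[
w_j(x)=\varphi(|x_j|)\,\eta_j(x)\,\Gamma_j(x),\qquad
\Gamma_j(x)=\begin{cases}|x-x_j|^{-(n-2)}&n\ge 3\\ \log\frac{1}{r_j}+\text{(correction)}&n=2,\end{cases}
\]
adjusted by a constant so that $w_j\ge A\varphi(|x_j|)r_j^{-(n-2)}$ (resp. $\ge A\varphi(|x_j|)\log\frac1{r_j}$) on $B_{r_j/2}(x_j)$, which forces \eqref{eq4.6}. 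Since $\Gamma_j$ is harmonic away from $x_j$, we have $-\Delta w_j = \varphi(|x_j|)\big(-2\nabla\eta_j\!\cdot\!\nabla\Gamma_j - \Gamma_j\Delta\eta_j\big)$ on the annulus $r_j/2<|x-x_j|<r_j$ and $-\Delta w_j=0$ outside $\overline{B_{r_j}(x_j)}$; on the inner region $w_j$ is just $\varphi(|x_j|)\Gamma_j$, which is superharmonic (a positive multiple of the fundamental solution), so $-\Delta w_j\ge 0$ there. A routine computation on the annulus, using $|\nabla\Gamma_j|\sim r_j^{-(n-1)}$, $\Gamma_j\sim r_j^{-(n-2)}$, and $|\Delta\eta_j|\le C/r_j^2$, gives $|\Delta w_j|\le C\varphi(|x_j|)r_j^{-n}$, and on the annulus one can further arrange (by choosing $\eta_j$ radial and monotone, or by subtracting off a suitable harmonic piece) that $-\Delta w_j\ge 0$ everywhere, so that $0\le -\Delta w_j\le C\varphi(|x_j|)r_j^{-n}\chi_{B_{r_j}(x_j)}$. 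Absorbing $C$ into $A$ (or rescaling $\varphi$) yields \eqref{eq4.4}.

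Next I would set $u=\sum_{j=1}^\infty w_j + c_0$ for a constant $c_0\ge 1$ in the $n=2$ case and $u=\sum_j w_j+1$ in the $n\ge3$ case (using that $\Gamma_j\ge r_j^{-(n-2)}\ge 1$ on its support already gives \eqref{eq4.7} from a single term, but adding $1$ is cleanest). The series converges locally uniformly on $\Omega\setminus\{0\}$ and in fact in $C^\infty_{\text{loc}}(\Omega\setminus\{0\})$: by \eqref{eq4.1} the balls $B_{r_j}(x_j)$ are pairwise disjoint and, away from $0$, only finitely many of them meet any fixed compact set $K\Subset\Omega\setminus\{0\}$; for the remaining (infinitely many) indices $j$ with $|x_j|$ small, on $K$ the bump $w_j$ is harmonic and bounded by $C\varphi(|x_j|)\,\mathrm{dist}(K,0)^{-(n-2)}$ together with matching derivative bounds, so \eqref{eq4.2} forces uniform convergence of the tail and all its derivatives. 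Hence $u\in C^\infty(\Omega\setminus\{0\})$, $-\Delta u=\sum_j(-\Delta w_j)$, and since the supports $B_{r_j}(x_j)$ are disjoint, at each point $x$ at most one term of this sum is nonzero. This immediately gives \eqref{eq4.4}, \eqref{eq4.5}, \eqref{eq4.6}; positivity of every $w_j$ plus the additive constant gives \eqref{eq4.7}, and positivity of $u$ is clear. The constant $A=A(n)$ comes only from the annulus computation and the normalization of $\Gamma$, so it depends on $n$ alone.

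The main obstacle is the $n=2$ case: $\log\frac1{|x-x_j|}$ is not a positive function on all of $B_{r_j}(x_j)$ — it changes sign near $|x-x_j|=1$ — and it is not bounded below by $\log\frac1{r_j}$ on the annulus $r_j/2<|x-x_j|<r_j$ by a controlled amount unless one is careful. The fix is to replace $\Gamma_j$ by the shifted kernel $G_j(x)=\log\frac{2r_j}{|x-x_j|} $ (positive on $B_{2r_j}(x_j)\supset B_{r_j}(x_j)$, harmonic off $x_j$, and $\ge \log 2$ on the annulus while $\ge\log 2+\log\frac{1}{r_j}\cdot\frac{1}{\,?\,}$—more simply, bound $\log\frac{2r_j}{|x-x_j|}$ from below on $B_{r_j/2}$ by $\log 4$ and instead take the bump's height to scale like $\log\frac1{r_j}$ by composing with an extra harmonic function $\log\frac1{|x_j|}$-sized), and then the remaining bookkeeping is identical. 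A cleaner route, which I would actually follow, is: in $n=2$ let $w_j(x)=\varphi(|x_j|)\,\eta_j(x)\big(\log\frac{2r_j}{|x-x_j|}+\log\frac{1}{2r_j}+\log\frac{1}{r_j}\big)$ — i.e. add a large harmonic constant — so that on $B_{r_j/2}(x_j)$ one has $w_j\ge A\varphi(|x_j|)\log\frac1{r_j}$ while the Laplacian is still supported in the annulus with the same bound $C\varphi(|x_j|)r_j^{-2}\le C\varphi(|x_j|)r_j^{-n}$; local uniform convergence of the tail on $K\Subset B_2(0)\setminus\{0\}$ still follows from \eqref{eq4.2} since on $K$ each such $w_j$ is harmonic and $O(\varphi(|x_j|)\log\frac{1}{r_j})=O(\varphi(|x_j|)\log\frac{2}{|x_j|})$, which is summable because $\varphi(|x_j|)\log\frac2{|x_j|}\to 0$ can be arranged (replacing $\varphi$ by $\max\{\varphi,\,1/\log\frac2r\}^{-1}$... ) — more precisely, by passing to a subsequence of $\{x_j\}$ if necessary we may assume $\varphi(|x_j|)\le 2^{-j}$, so $\varphi(|x_j|)\log\frac2{|x_j|}$ is summable by \eqref{eq4.1}. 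This handles the only genuine difficulty; everything else is the disjoint-support bookkeeping sketched above.
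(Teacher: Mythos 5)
Your construction has a structural flaw that no choice of cutoff can repair. You want each bump $w_j$ to be nonnegative, supported in $B_{r_j}(x_j)$, and to satisfy $-\Delta w_j\ge 0$ everywhere (this is what your "disjoint supports, at most one term nonzero" bookkeeping needs in order to get $0\le-\Delta u$ in \eqref{eq4.4} and \eqref{eq4.5}). But a nonnegative function that is superharmonic on all of $\mathbb{R}^n$ and vanishes on an open set is identically zero, by the strong minimum principle; so a nontrivial compactly supported bump with $-\Delta w_j\ge0$ simply does not exist, and the step "one can further arrange \dots that $-\Delta w_j\ge 0$ everywhere" cannot be carried out (your own annulus computation shows $-\Delta w_j<0$ where $\eta_j''>0$ near the outer edge). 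Two further points fail independently of this: on $B_{r_j/2}(x_j)$ your $w_j$ equals $\varphi(|x_j|)\,\Gamma_j$, which blows up at $x_j$, so $u\notin C^\infty(\Omega\setminus\{0\})$ and $-\Delta u$ acquires a Dirac mass at $x_j$, contradicting the pointwise bound \eqref{eq4.4}; and \eqref{eq4.6} is required on the whole ball $B_{r_j}(x_j)$, whereas your $w_j$ vanishes near $\partial B_{r_j}(x_j)$ (the cutoff dies there, the other bumps are supported elsewhere), leaving $u\approx 1$, which is far below $A\varphi(|x_j|)r_j^{-(n-2)}$ when $r_j$ is small. In the $n=2$ patch you also use summability of $\varphi(|x_j|)\log\frac1{r_j}$, which is not available ($r_j$ may be chosen much smaller than $|x_j|$, so $\log\frac1{r_j}$ is not $O(\log\frac2{|x_j|})$), and passing to a subsequence of $\{x_j\}$ is not permitted: the lemma must produce $u$ for the given sequence.

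The repair is essentially the paper's proof: do not truncate the kernel, truncate the source. Take a fixed $C^\infty$ bump $\psi$ supported in $\overline{B_1(0)}$, set $\psi_j(x_j+r_j\eta)=\psi(\eta)$, and let $f=\sum_j \frac{\varphi(|x_j|)}{r_j^n}\psi_j$, a smooth function away from $0$ with disjointly supported pieces and $\int f= I\sum_j\varphi(|x_j|)<\infty$ by \eqref{eq4.2}. Define $u$ as the Newtonian potential of $f$ plus $1$ when $n\ge3$, and as $\frac1{2\pi}\int_{|y|<2}\bigl(\log\frac4{|x-y|}\bigr)f(y)\,dy+1$ on $B_2(0)$ when $n=2$ (the kernel is nonnegative there, which is exactly why $\Omega=B_2(0)$ in that case). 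Then $-\Delta u=f$ pointwise, giving \eqref{eq4.4} and \eqref{eq4.5} exactly, $u\ge1$ gives \eqref{eq4.7}, and the contribution of the $j$-th piece alone already yields $u\ge J\varphi(|x_j|)r_j^{-(n-2)}$ (resp. $u\ge \frac{I}{2\pi}\varphi(|x_j|)\log\frac1{r_j}$) on all of $B_{r_j}(x_j)$, because the potential of that piece does not vanish at the boundary of the ball; no summability beyond \eqref{eq4.2} is needed.
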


\begin{proof}
 Let $\psi:\mathbb{R}^n \to [0,1]$ be a $C^\infty$ function whose
 support is $\overline{B_1 (0)}$.  Define $\psi_j :\mathbb{R}^n \to
 [0,1]$ by 
$\psi_j (y)=\psi(\eta)$ where $y=x_j +r_j \eta$.
 Then
 \begin{equation}\label{eq4.8}
  \int_{\mathbb{R}^n} \psi_j (y)\,dy=\int_{\mathbb{R}^n} \psi(\eta)r^{n}_{j} \,d\eta=r^{n}_{j}I
 \end{equation}
 where $I=\int_{\mathbb{R}^n}\psi(\eta)\,d\eta>0$.  Let $\varepsilon_j :=\varphi(|x_j|)$ and
 \begin{equation}\label{eq4.9}
  f:=\sum^{\infty}_{j=1}M_j \psi_j \quad\text{ where }M_j=\frac{\varepsilon_j}{r^{n}_{j}}.
 \end{equation}
 Since the functions $\psi_j$ have disjoint supports, $f\in C^\infty (\mathbb{R}^n \backslash\{0\})$ and by \eqref{eq4.9}, \eqref{eq4.8}, and \eqref{eq4.2} we have
 \begin{align}\label{eq4.10}
  \notag \int_{\mathbb{R}^n} f(y)\,dy&=\sum^{\infty}_{j=1}M_j \int_{\mathbb{R}^n} \psi_j (y)\,dy=I\sum^{\infty}_{j=1} M_j r^{n}_{j}\\
  &=I\sum^{\infty}_{j=1}\varepsilon_j <\infty.
 \end{align}
\begin{description}
 \item[Case I.] Suppose $n\geq 3$.  Then for $x=x_j +r_j \xi$ and $|\xi|<1$ we have
 \begin{align*}
  \int_{\mathbb{R}^n} &\frac{f(y)}{|x-y|^{n-2}}\,dy\geq \int_{|y-x_j
    |<r_j} 
\frac{M_j \psi_j (y)}{|x-y|^{n-2}} \,dy=\int_{|\eta|<1} 
\frac{M_j \psi(\eta)r^{n}_{j}}{r^{n-2}_{j} |\xi-\eta|^{n-2}}\,d\eta\\
  &=\frac{\varepsilon_j}{r^{n-2}_j} \int_{|\eta|<1} \frac{\psi(\eta)}{|\xi-\eta|^{n-2}} \,d\eta\\
  &\geq \frac{J\varepsilon_j}{r^{n-2}_{j}} \quad\text{ where }J=\min_{|\xi|\leq 1} \int_{|\eta|<1} \frac{\psi(\eta)\,d\eta}{|\xi-\eta|^{n-2}}>0.
 \end{align*}
 Thus letting
 \begin{equation*}
  u(x):=\int_{\mathbb{R}^n} \frac{B}{|x-y|^{n-2}} f(y)\,dy+1 \quad\text{ for }x\in\mathbb{R}^n \backslash \{0\}
 \end{equation*}
 where $\frac{B}{|x|^{n-2}}$ is a fundamental solution of $-\Delta$ we
 have $u$ satisfies \eqref{eq4.6} with $A=BJ$ and
 \begin{equation*}
  -\Delta u(x)=f(x)=M_j \psi_j (x)\leq \frac{\varepsilon_j}{r^{n}_{j}} \quad\text{ for }x\in B_{r_j} (x_j).
 \end{equation*}
 Also $u\in C^\infty (\mathbb{R}^n \backslash \{0\})$ and $u$ clearly
 satisfies \eqref{eq4.5} and \eqref{eq4.7}.
 \item[Case II.] Suppose $n=2$.  Then for $x=x_j +r_j \xi$ and $|\xi|<1$ we have
 \begin{align*}
  \int_{|y|<2} &\left( \log \frac{4}{|x-y|} \right) f(y)\,dy\geq \int_{|y-x_j|<r_j} \left( \log \frac{4}{|x-y|} \right) M_j \psi_j (y)\,dy\\
  &=\int_{|\eta|<1} \left(\log \frac{1}{r_j}+\log \frac{4}{|\xi-\eta|} \right) M_j \psi(\eta)r^{2}_j \,d\eta\\
  &\geq \varepsilon_j \left[ \log\frac{1}{r_j} \int_{|\eta|<1} \psi(\eta)\,d\eta \right] =I\varepsilon_j \log\frac{1}{r_j}.
 \end{align*}
 Thus letting
 \begin{equation*}
  u(x):=\int_{|y|<2} \frac{1}{2\pi} \left(\log \frac{4}{|x-y|} \right) f(y)\,dy+1 \quad\text{for }x\in B_2 (0)\backslash \{0\}
 \end{equation*}
 we have $u$ satisfies \eqref{eq4.6} with $A=\frac{I}{2\pi}$ and
 \begin{equation*}
  -\Delta u(x)=f(x)=M_j \psi_j (x)\leq \frac{\varepsilon_j}{r^{2}_{j}} 
\quad\text{for }x\in B_{r_j}(x_j).
 \end{equation*}
 Also $u\in C^\infty (B_2 (0)\backslash \{0\})$ and $u$ clearly
 satisfies \eqref{eq4.5} and \eqref{eq4.7}.
\end{description}
\end{proof}

\begin{lem}\label{lem4.2}
 If $R>0$ and $x_0 \in\mathbb{R}^n , \, n\geq 3$, then
 \begin{equation}\label{eq4.11}
  \int_{|y-x_0|<R} \frac{\,dy}{|x-y|^{n-2}} \leq \frac{CR^n}{|x-x_0|^{n-2}+R^{n-2}}
 \end{equation}
 for all $x\in \mathbb{R}^n$ where $C=C(n)>0$.
\end{lem}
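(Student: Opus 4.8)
The plan is to treat separately the \emph{far} regime $|x-x_0|\ge 2R$ and the \emph{near} regime $|x-x_0|<2R$, and in each case to compare both sides of \eqref{eq4.11} with the elementary quantity $R^n/|x-x_0|^{n-2}$ (in the far regime) or $R^2$ (in the near regime), where $\omega_n$ denotes the volume of the unit ball in $\mathbb{R}^n$.

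First I would handle $|x-x_0|\ge 2R$. For every $y\in B_R(x_0)$ the triangle inequality gives $|x-y|\ge|x-x_0|-|y-x_0|\ge|x-x_0|-R\ge\tfrac12|x-x_0|$, so the integrand is bounded by $2^{n-2}|x-x_0|^{-(n-2)}$ throughout the region of integration; since $|B_R(x_0)|=\omega_n R^n$ this yields $\int_{|y-x_0|<R}|x-y|^{-(n-2)}\,dy\le 2^{n-2}\omega_n R^n|x-x_0|^{-(n-2)}$. Because $|x-x_0|\ge 2R\ge R$ here, we have $|x-x_0|^{n-2}+R^{n-2}\le2|x-x_0|^{n-2}$, and so \eqref{eq4.11} follows in this regime with $C=2^{n-1}\omega_n$.

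Next I would handle $|x-x_0|<2R$. Then $B_R(x_0)\subset B_{3R}(x)$, since $|x-y|\le|x-x_0|+|y-x_0|<3R$ for $y\in B_R(x_0)$, so passing to polar coordinates centred at $x$,
\[
\int_{|y-x_0|<R}\frac{dy}{|x-y|^{n-2}}\le\int_{|y-x|<3R}\frac{dy}{|x-y|^{n-2}}=n\omega_n\int_0^{3R}\rho\,d\rho=\frac{9n\omega_n}{2}\,R^2,
\]
where the radial integral converges precisely because $n\ge3$. On the other hand $|x-x_0|^{n-2}+R^{n-2}\le(2^{n-2}+1)R^{n-2}$ in this regime, so $R^n/(|x-x_0|^{n-2}+R^{n-2})\ge(2^{n-2}+1)^{-1}R^2$, and \eqref{eq4.11} holds with $C=\tfrac{9n\omega_n}{2}(2^{n-2}+1)$. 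Taking $C$ to be the larger of the two constants obtained in the two regimes finishes the proof.

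The computations are entirely routine; the only point requiring any care is the choice of the cutoff radius (here $2R$), which makes the denominator $|x-y|^{n-2}$ comparable to $|x-x_0|^{n-2}$ in the far regime, and it is in the near-regime polar integral that the hypothesis $n\ge3$ (equivalently $n-2<n$) is actually used.
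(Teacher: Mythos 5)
Your proof is correct. It differs slightly in organisation from the paper's: there the authors rescale, setting $x=x_0+R\xi$, $y=x_0+R\eta$, so that the integral becomes $R^2\int_{|\eta|<1}|\xi-\eta|^{-(n-2)}\,d\eta$, and then simply invoke the standard bound $\int_{|\eta|<1}|\xi-\eta|^{-(n-2)}\,d\eta\le C/(|\xi|^{n-2}+1)$ --- which is exactly the $R=1$ case of the lemma --- before scaling back. Your near/far dichotomy at $|x-x_0|=2R$ is, in effect, a proof of that unit-ball estimate carried out directly at scale $R$: the far regime ($|x-y|\ge\tfrac12|x-x_0|$ plus the volume of $B_R(x_0)$) gives the decay $R^n|x-x_0|^{-(n-2)}$, and the near regime (inclusion $B_R(x_0)\subset B_{3R}(x)$ plus polar coordinates, where $n\ge3$ enters) gives the uniform bound $CR^2$; matching each against the right-hand side in the appropriate regime yields \eqref{eq4.11} with an explicit constant. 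So your argument is more self-contained --- it supplies the elementary estimate the paper leaves unproved, with explicit constants --- while the paper's version is shorter and makes the scaling homogeneity in $R$ transparent; both are complete and correct.
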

 
\begin{proof}
 Denote the left side of \eqref{eq4.11} by $N(x)$.  Let $x=x_0 +R\xi$ and $y=x_0 +R\eta$.  Then
 \begin{align*}
  N(x)&=N(x_0 +R\xi)=\int_{|\eta|<1} \frac{R^n \,d\eta}{R^{n-2} |\xi-\eta|^{n-2}}\\
  &\leq \frac{CR^2}{|\xi|^{n-2}+1}=\frac{CR^2R^{n-2}}{|R\xi|^{n-2} +R^{n-2}}\\
  &=\frac{CR^n}{|x-x_0|^{n-2}+R^{n-2}}.
 \end{align*}
\end{proof}

\begin{lem}\label{lem4.3}
 Let $u$ be a $C^2$ nonnegative superharmonic function in $B_{2\varepsilon}(0)\backslash \{0\} \subset \mathbb{R}^2$ satisfying
 \begin{equation}\label{eq4.12}
  \log^{+} (-\Delta u(x))=O\left(H
    \left(\log\frac{1}{|x|}\right) 
\right) \quad\text{ as }x\to0
 \end{equation}
 where $\varepsilon\in(0,1/2)$ and $H:(0,\infty)\to (0,\infty)$ is a
 continuous nondecreasing function satisfying
 $\lim_{t\to\infty}H(t)=\infty$.  Then
 \begin{equation}\label{eq4.13}
  u(x)=O\left(\log\frac{1}{|x|}\right)
+o\left(H\left(\log\frac{2}{|x|}\right) \right) \quad\text{ as }x\to0.
 \end{equation}
\end{lem}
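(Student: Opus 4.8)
The plan is to reduce Lemma \ref{lem4.3} to a scalar statement about the Newtonian (logarithmic) potential of $-\Delta u$ on a shrinking family of annuli, and then invoke the Brezis--Lions representation formula from Appendix \ref{secA}. First I would fix a small $\varepsilon$ and write, via Brezis--Lions applied to the nonnegative superharmonic function $u$ on $B_{2\varepsilon}(0)\setminus\{0\}$,
\[
u(x) = \alpha \log\frac{1}{|x|} + \frac{1}{2\pi}\int_{B_{\varepsilon}(0)} \left(\log\frac{1}{|x-y|}\right)(-\Delta u(y))\,dy + H(x),
\]
where $\alpha\ge 0$ and $H$ is harmonic in $B_\varepsilon(0)$ (hence bounded near $0$). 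The term $\alpha\log\frac1{|x|}$ contributes the $O(\log\frac1{|x|})$ in \eqref{eq4.13}, and the harmonic remainder is $O(1)$, so everything comes down to estimating the potential term $\mathcal{N}(x):=\int_{B_\varepsilon(0)}\log\frac{2}{|x-y|}\,(-\Delta u)(y)\,dy$ (the precise constant in the log-kernel is irrelevant after absorbing into $O$-terms).

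Next I would exploit hypothesis \eqref{eq4.12}: there is a constant $K$ and a radius $\rho_0$ so that $-\Delta u(y) \le \exp\!\left(K\,H(\log\frac1{|y|})\right)$ for $0<|y|<\rho_0$. Now dyadically decompose: for $x$ with $|x|=r$ small, split $B_\varepsilon(0)$ into the annulus $A_0 = \{|x-y|<\sqrt r\}$ (say), the region near the origin $\{|y|<r^2\}$, and the complementary ``far'' region. On the far region the kernel $\log\frac2{|x-y|}$ is $O(1)$ (or better, $O(\log\frac1{\sqrt r})$) and $-\Delta u$ there is controlled by $\exp(KH(\log\frac1{|y|}))$, but more usefully one splits the far region itself dyadically in $|y|$: on $\{2^{-k-1}\le|y|<2^{-k}\}$ one has $-\Delta u \le \exp(KH(k\log 2))$ while the integral of the log-kernel over that annulus is $O(2^{-k}\cdot 2^{-k})$ times a logarithmic factor, which decays geometrically and beats any value of $H$, so the tail away from $x$ and away from $0$ contributes $o(H(\log\frac2r))$. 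Near the origin, $\{|y|<r^2\}$, the kernel is bounded by $\log\frac2{|x-y|}\le C\log\frac1r$ for $|x|=r$ (since $|x-y|\ge r - r^2 \ge r/2$), and $\int_{|y|<r^2}(-\Delta u)\,dy$ is finite and in fact $o(1)$ by integrability (from Brezis--Lions $-\Delta u$ is integrable near $0$ against the kernel, hence $L^1$ near $0$), giving a contribution $O(\log\frac1r \cdot o(1)) = o(\log\frac1r)$, absorbable. The genuinely delicate piece is the local ball $A_0=\{|x-y|<\sqrt r\}$: there $|y|$ is comparable to $r$, so $-\Delta u(y)\le \exp(KH(\log\frac1r))$ roughly, and $\int_{|x-y|<\sqrt r}\log\frac2{|x-y|}\,dy = O(r\log\frac1r)$, so this contributes $O\!\left(r\log\tfrac1r\cdot \exp(KH(\log\tfrac1r))\right)$; the point is that $r\log\frac1r = e^{-t}\cdot t$ with $t=\log\frac1r$ decays faster than $e^{KH(t)}$ grows provided $H(t)=o(t)$... but $H$ is merely $\to\infty$, so this crude bound is \emph{not} enough.

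Hence the main obstacle, and where I would spend the real effort, is handling the near-diagonal contribution without the luxury of a polynomial/exponential comparison — we only know $H\to\infty$. The right move is \emph{not} to bound $-\Delta u$ pointwise there, but to use its integrability more carefully, combined with a Moser/Harnack-type or a Hedberg-type splitting: choose the cutoff radius not as $\sqrt r$ but as a radius $\delta(r)\to 0$ chosen so that $\int_{|y|<2|x|}(-\Delta u)(y)\,dy$ is small (possible since this integral tends to $0$), then on $\{|x-y|<\delta\}$ write $\log\frac2{|x-y|} \le \log\frac2{|x-y|}$ and estimate $\int \log\frac2{|x-y|}(-\Delta u)(y)\,dy$ over this small ball directly: by the integrability of $(-\Delta u)(y)\log\frac{1}{|y|}$ near $0$ (which Brezis--Lions guarantees) and the fact that $\log\frac2{|x-y|}\lesssim \log\frac1{|y|} + O(1)$ on the relevant set, this local integral is $o(1)\cdot H(\log\frac2r)$ only if we interleave with the growth bound \eqref{eq4.12} on an intermediate scale. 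Concretely I would use a three-scale decomposition $|y|<r/H(t)$, $r/H(t)\le|y|<rH(t)$, $|y|\ge rH(t)$ with $t=\log\frac1r$, apply the pointwise bound $-\Delta u \le e^{KH(\cdot)}$ only on the middle (where $H$ of the argument is within a bounded factor of $H(t)$, using monotonicity), get a contribution $\lesssim \frac{1}{H(t)}\cdot r^2 H(t)^2 \cdot \frac1{r^2} e^{KH(t)}\cdot(\text{log factor})$ — still exponential — so ultimately the cleanest route is: use Brezis--Lions to get $\int_{B_\varepsilon}(-\Delta u)(y)\log\frac1{|y|}\,dy<\infty$, deduce $\mathcal{N}(x)\le \log\frac2{\text{dist}} \int \cdots$ is dominated by $C\log\frac1{|x|} + \int_{|y|>|x|/2} \log\frac1{|y|}(-\Delta u)\,dy$, and show the latter tail integral is $o(H(\log\frac2{|x|}))$ by monotone convergence/the integrability tail estimate together with $H\to\infty$. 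I expect the author's actual proof to follow exactly this last line — splitting at $|y|=|x|/2$, bounding the inner part by $O(\log\frac1{|x|})$ using integrability, and bounding the outer part using \eqref{eq4.12} dyadically with the geometric decay of $\int_{\text{annulus}}\log(\cdots)$ absorbing $e^{KH}$ — and the subtle bookkeeping in that outer dyadic sum, ensuring the $o$ (not merely $O$) in the $H$ term, is the crux.
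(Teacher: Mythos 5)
Your proposal correctly isolates the crux --- the contribution of $-\Delta u$ from near $x$, where the pointwise bound $e^{KH(\log\frac1{|x|})}$ is useless because $H$ may grow arbitrarily fast --- but the route you finally settle on does not close it. The asserted domination $\log\frac{2}{|x-y|}\lesssim\log\frac1{|y|}+O(1)$ on $\{|y|>|x|/2\}$, and hence the bound $\mathcal N(x)\le C\log\frac1{|x|}+\int_{|y|>|x|/2}\log\frac1{|y|}(-\Delta u)\,dy$, fails exactly on the near-diagonal set $y\to x$ (the left side blows up while the right side stays $\approx\log\frac1{|x|}$), i.e.\ on the piece you had just identified as delicate. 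Moreover the input you invoke there, $\int_{B_\varepsilon}(-\Delta u)(y)\log\frac1{|y|}\,dy<\infty$, is not a consequence of Brezis--Lions (Lemma \ref{A.1} gives only $-\Delta u\in L^1$ plus the representation formula, with no finiteness of the potential at the origin), and it can genuinely fail under \eqref{eq4.12}: for instance $-\Delta u(y)\sim|y|^{-2}\bigl(\log\frac1{|y|}\bigr)^{-2}$ has finite mass but infinite logarithmic energy at $0$, yet satisfies \eqref{eq4.12} whenever $H(t)\gtrsim t$. Finally, your dyadic treatment of the region away from $x$, which inserts the pointwise bound $e^{KH(k\log 2)}$ on each annulus, does not ``decay geometrically'' once $H$ is superlinear; that region should simply be estimated by (bounded kernel) times (total mass), giving $O(\log\frac1{|x|})$ --- reparable, but wrong as written.

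The missing idea is the paper's rearrangement step, which is how the density bound and the vanishing local mass are combined. Fix $x_j\to0$; by \eqref{eq4.12} and the monotonicity of $H$, $0\le-\Delta u\le e^{AH(\log\frac2{|x_j|})}$ on $B_{|x_j|/2}(x_j)$, while $\epsilon_j:=\int_{B_{|x_j|/2}(x_j)}(-\Delta u)\,dy\to0$ by Lemma \ref{A.1}. Define $r_j$ by $\pi r_j^2\,e^{AH(\log\frac2{|x_j|})}=\epsilon_j$; since the kernel $\log\frac1{|y-x_j|}$ is radially decreasing about $x_j$, the worst case compatible with the density bound is the mass $\epsilon_j$ packed into $B_{r_j}(x_j)$ at maximal density, so
\[
\int_{|y-x_j|<\frac{|x_j|}{2}}\Bigl(\log\tfrac1{|y-x_j|}\Bigr)(-\Delta u)\,dy
\le C\,r_j^2\Bigl(\log\tfrac1{r_j}\Bigr)e^{AH\left(\log\frac2{|x_j|}\right)}
= C\,\epsilon_j\log\tfrac1{r_j}
\lesssim \epsilon_j\Bigl(H\bigl(\log\tfrac2{|x_j|}\bigr)+\log\tfrac1{\epsilon_j}\Bigr),
\]
which is $o\bigl(H(\log\frac2{|x_j|})\bigr)+o(1)$; the region $|y-x_j|>|x_j|/2$ contributes $O(\log\frac1{|x_j|})$ because there the kernel is at most $\log\frac2{|x_j|}$ and $-\Delta u$ has finite total mass, and the Brezis--Lions representation then yields \eqref{eq4.13}. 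Without this (or an equivalent) mechanism for the near-diagonal term, your argument does not prove the lemma.
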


\begin{proof}
  Let $x_j \in B_{\frac{\varepsilon}{2}}(0)\backslash \{0\}$ be a
  sequence which converges to the origin.  It suffices to prove
  \eqref{eq4.13} with $x$ replaced with $x_j$.
 
 By \eqref{eq4.12} there exists $A>0$ such that 
 \begin{equation*}
 \log^{+}(-\Delta u(y))\leq AH\left(\log\frac{2}{|x_j|}\right) \quad\text{ for }|y-x_j|\leq \frac{|x_j|}{2}.
 \end{equation*}
 Thus
 \begin{equation}\label{eq4.13.1}
  0\leq -\Delta u(y)\leq \exp \left(AH\left(\log\frac{2}{|x_j|}\right) \right) \quad\text{ for }|y-x_j| \leq \frac{|x_j|}{2}.
 \end{equation}
 Define $r_j \ge 0$ by
 \begin{equation*}
  \int_{|y-x_j|<r_j} e^{AH\left(\log\frac{2}{|x_j|}\right)}
  dy=\int_{|y-x_j|<\frac{|x_j|}{2}} -\Delta u(y)\,dy\to0 \quad \text{ as }j\to\infty
 \end{equation*}
 by Lemma \ref{A.1}. Thus
 \begin{equation}\label{eq4.14}
  r_j =o\left( \exp \left(-\frac{A}{2} H\left(\log\frac{2}{|x_j|}\right) \right) \right) \quad\text{ as }j\to\infty
 \end{equation}
and by \eqref{eq4.13.1}
\[
\int_{|y-x_j|<\frac{|x_j|}{2}} \left(\log\frac{1}{|y-x_j|}\right) 
(-\Delta u(y))\,dy
\le\int_{|y-x_j|<r_j} \left(\log\frac{1}{|y-x_j|}\right) 
\exp\left(AH\left(\log\frac{2}{|x_j|}\right) \right)\,dy.
\] 
Hence by Lemma \ref{A.1} we get
 \begin{align*}
  u(x_j) &\leq C\left[\log \frac{1}{|x_j|}
+\int_{|y-x_j|>\frac{|x_j|}{2}, |y|<\varepsilon}\left(\log\frac{1}{|y-x_j|} \right) (-\Delta u(y))\,dy\right]\\
 &\quad +C\int_{|y-x_j|<r_j} \left(\log\frac{1}{|y-x_j|}\right) 
\exp\left(AH\left(\log\frac{2}{|x_j|}\right) \right)\,dy\\
  &\leq C\left[ \log\frac{1}{|x_j|}+r^{2}_{j} \left(\log\frac{1}{r_j}\right) \exp\left( AH\left(\log\frac{2}{|x_j|}\right) \right) \right]\\
  &\leq C\log\frac{1}{|x_j|}
+o\left(H\left(\log\frac{2}{|x_j|}\right) \right) \quad \text{as }j\to\infty
 \end{align*}
 by \eqref{eq4.14}.
\end{proof}

\begin{lem}\label{lem4.4}
 Let $u$ be a $C^2$ nonnegative function in $B_{3\varepsilon}(0)\backslash \{0\}\subset \mathbb{R}^n ,\, n\geq 3$, satisfying 
 \begin{equation}\label{eq4.15}
  0\leq -\Delta u(x)=O\left( \left(\frac{1}{|x|}\right)^\gamma \left(\log \frac{1}{|x|}\right)^q \right) \quad\text{ as }x\to0
 \end{equation}
 where $\varepsilon\in (0,1/8)$,  $\gamma\in\mathbb{R}$, and $q\geq 0$ are constants.
 \begin{enumerate}
  \item If $q=0$ then
  \begin{equation}\label{eq4.16}
   u(x)=O\left( \left(\frac{1}{|x|}\right)^{n-2} \right)+o\left( \left(\frac{1}{|x|}\right)^{\gamma\frac{n-2}{n}} \right) \quad\text{ as }x\to0.
  \end{equation}
 \item If $q>0$ and $\gamma\geq n$ then
 \begin{equation}\label{eq4.17}
  u(x)=o\left( \left(\frac{1}{|x|}\right)^{\gamma \frac{n-2}{n}} \left(\log \frac{1}{|x|}\right)^{q\frac{n-2}{n}}\right) \quad\text{ as }x\to0.
 \end{equation}
 \item If $q=0$, $\gamma>n$, and $v(x)$ is a $C^2$ nonnegative solution of
 \begin{equation}\label{eq4.18}
  0\leq -\Delta v\leq|x|^{-\beta} \left(u+|x|^{-(n-2)}\right)^\sigma \quad\text{in }B_{3\varepsilon} (0)\backslash \{0\}
 \end{equation}
 where $\beta\in\mathbb{R}$ and $\sigma\ge 2/(n-2)$ then as $x\to0$ we have
\begin{numcases}{v(x)=}
   O \left( \left(\frac{1}{|x|}\right)^{n-2} \right) +o\left(
     \left(\frac{1}{|x|}\right)^{\frac{\gamma}{n}[(n-2)\sigma-2]+\beta}
   \right) &if $\sigma>\frac{2}{n-2}$ \label{eq4.19}\\
   O \left( \left(\frac{1}{|x|}\right)^{n-2} \right)+o\left(
     \left(\frac{1}{|x|}\right)^\beta \log \frac{1}{|x|} \right) &if
   $\sigma=\frac{2}{n-2}$ \label{eq4.20}.
  \end{numcases}
\end{enumerate}
\end{lem}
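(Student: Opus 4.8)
The plan is to combine the Brezis--Lions representation (Lemma \ref{A.1}) with an elementary mass--concentration estimate for parts (i) and (ii), and with the nonlinear potential bound of Corollary \ref{cor1} for part (iii). Write $\mu:=-\Delta u\ge 0$; by hypothesis $\mu(y)\le C|y|^{-\gamma}(\log\frac1{|y|})^{q}$ for $y$ near $0$, and both $u$ and (in (iii)) $v$ are nonnegative superharmonic near $0$. For any such nonnegative superharmonic $w$, Lemma \ref{A.1} furnishes the finiteness $\int_{B_\varepsilon}(-\Delta w)\,dy<\infty$ together with the bound $0\le w(x)\le C\bigl[\,|x|^{-(n-2)}+\int_{B_\varepsilon}|x-y|^{-(n-2)}(-\Delta w)(y)\,dy\,\bigr]$ for $|x|$ small. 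I will also use the rearrangement inequality: if $0\le\nu\le K$ on $B_R(x_0)$ with $\int_{B_R(x_0)}\nu=m$ and $r:=(m/(K|B_1|))^{1/n}\le R$, then $\int_{B_R(x_0)}|x_0-y|^{-(n-2)}\nu(y)\,dy\le K\int_{B_r(x_0)}|x_0-y|^{-(n-2)}\,dy=c_nKr^2$; this follows by comparing $K|x_0-y|^{-(n-2)}\ge Kr^{-(n-2)}$ on $B_r(x_0)$ against $\nu(y)|x_0-y|^{-(n-2)}\le \nu(y) r^{-(n-2)}$ on the complementary shell $B_R(x_0)\setminus B_r(x_0)$, using $\int_{B_r(x_0)}(K-\nu)\ge\int_{B_R(x_0)\setminus B_r(x_0)}\nu$.

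For parts (i) and (ii) I fix a sequence $x_j\to 0$ and prove the asserted bound at each $x_j$. On $B_{|x_j|/2}(x_j)$ one has $|y|$ comparable to $|x_j|$, so $0\le\mu\le K_j:=C|x_j|^{-\gamma}(\log\frac1{|x_j|})^{q}$ there; put $m_j:=\int_{B_{|x_j|/2}(x_j)}\mu$, which tends to $0$ since $\int_{B_\varepsilon}\mu<\infty$ and these balls shrink to the origin. Define $r_j$ by $|B_1|r_j^nK_j=m_j$; then $r_j\le|x_j|/2$ because $m_j\le|B_{|x_j|/2}|K_j$. Inserting the representation for $u(x_j)$ and splitting the potential integral at $|y-x_j|=|x_j|/2$, the outer part is $\le C|x_j|^{-(n-2)}\int_{B_\varepsilon}\mu=O(|x_j|^{-(n-2)})$, while the inner part is, by the rearrangement inequality, $\le c_nK_jr_j^2=c\,K_j^{(n-2)/n}m_j^{2/n}=C|x_j|^{-\frac{n-2}{n}\gamma}(\log\frac1{|x_j|})^{\frac{n-2}{n}q}\,m_j^{2/n}$, which is $o\bigl(|x_j|^{-\frac{n-2}{n}\gamma}(\log\frac1{|x_j|})^{\frac{n-2}{n}q}\bigr)$ since $m_j\to0$. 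This gives \eqref{eq4.16} when $q=0$; when $q>0$ and $\gamma\ge n$ the term $O(|x_j|^{-(n-2)})$ is itself $o\bigl(|x_j|^{-\frac{n-2}{n}\gamma}(\log\frac1{|x_j|})^{\frac{n-2}{n}q}\bigr)$, giving \eqref{eq4.17}.

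For part (iii) ($q=0$, $\gamma>n$) I fix $x_0$ near $0$, set $\rho:=|x_0|$, and choose fixed radii $0<c_1<c_2<\tfrac12$, putting $B=B_{c_1\rho}(x_0)\subset B'=B_{c_2\rho}(x_0)\subset B_{3\varepsilon}(0)\setminus\{0\}$, with $|y|$ comparable to $\rho$ on $B'$. Applying Lemma \ref{A.1} to $v$ and splitting at $\partial B$, the outer potential contributes $O(\rho^{-(n-2)})$. On $B$, \eqref{eq4.18} with $(a+b)^\sigma\lesssim a^\sigma+b^\sigma$ gives $-\Delta v(y)\lesssim \rho^{-\beta-(n-2)\sigma}+\rho^{-\beta}u(y)^\sigma$; applying Lemma \ref{A.1} to $u$ and discarding (as in the estimate of the outer tail) the part of the potential with $|z-x_0|\ge c_2\rho$, which is $O(\rho^{-(n-2)})$ on $B$, yields $u(y)\le C\rho^{-(n-2)}+C\,(N_{B'}\mu)(y)$ on $B$, where $N_{B'}$ is the Newtonian potential over $B'$; hence $u(y)^\sigma\lesssim\rho^{-(n-2)\sigma}+(N_{B'}\mu)(y)^\sigma$. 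Integrating $-\Delta v$ against $|x_0-y|^{-(n-2)}$ over $B$ (and using $\int_B|x_0-y|^{-(n-2)}dy\le c\rho^2$ and $\int_B\subset\int_{B'}$) bounds the inner potential of $-\Delta v$ at $x_0$ by $C\rho^{2-\beta-(n-2)\sigma}+C\rho^{-\beta}\,\|N_{B'}((N_{B'}\mu)^\sigma)\|_{L^\infty(B')}$. Now Corollary \ref{cor1} on $B'$, with $M_\rho:=\|\mu\|_{L^1(B')}\to0$ (since $B'\subset B_{2\rho}(0)$) and $\|\mu\|_{L^\infty(B')}\le C\rho^{-\gamma}$, $|B'|=c\rho^n$, bounds the last norm by $C M_\rho^{(2\sigma+2)/n}\rho^{-\frac{\gamma}{n}[(n-2)\sigma-2]}$ if $\sigma>\frac2{n-2}$, and by $C M_\rho^{\sigma}\log(C\rho^{\,n-\gamma}/M_\rho)$ if $\sigma=\frac2{n-2}$. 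In the first case $M_\rho^{(2\sigma+2)/n}\to0$ makes this $o\bigl(\rho^{-\frac{\gamma}{n}[(n-2)\sigma-2]}\bigr)$, and $\gamma>n$ forces $\rho^{2-\beta-(n-2)\sigma}=o\bigl(\rho^{-\beta-\frac{\gamma}{n}[(n-2)\sigma-2]}\bigr)$ as well, which gives \eqref{eq4.19}. In the second case $(n-2)\sigma=2$ makes $\rho^{2-\beta-(n-2)\sigma}=\rho^{-\beta}=o(\rho^{-\beta}\log\frac1\rho)$, while $\log(C\rho^{\,n-\gamma}/M_\rho)=(\gamma-n)\log\frac1\rho+O(1)+\log\frac1{M_\rho}$ and, since $t^\sigma\log\frac1t\to0$ as $t\to0^+$, each of $M_\rho^{\sigma}\log\frac1\rho$, $M_\rho^{\sigma}$, $M_\rho^{\sigma}\log\frac1{M_\rho}$ is $o(\log\frac1\rho)$; hence $\|N_{B'}((N_{B'}\mu)^\sigma)\|_{L^\infty(B')}=o(\log\frac1\rho)$, which gives \eqref{eq4.20}. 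Letting $x_0\to0$ completes the argument.

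The routine parts are the repeated peeling--off of $O(|x|^{-(n-2)})$ tails afforded by Lemma \ref{A.1} and the arithmetic of exponents. The crux, and the only place where the machinery of Section \ref{nonlin} is essential, is part (iii): one must localize the problem to a fixed--eccentricity ball $B'$ around $x_0$ so that, after removing the tails, the surviving quantity is genuinely a Havin--Maz'ya potential $N_{B'}((N_{B'}\mu)^\sigma)(x_0)$ to which Corollary \ref{cor1} applies with $s=1$; the two inputs $\|\mu\|_{L^1(B')}\to0$ and $\|\mu\|_{L^\infty(B')}\lesssim|x_0|^{-\gamma}$ are exactly what produces the sharp exponent $\frac{\gamma}{n}[(n-2)\sigma-2]+\beta$ (rather than the cruder one obtained by first bounding $u$ pointwise via (i) and then substituting). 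The remaining care is in checking that every residual term is absorbed into the stated $O$ and $o$ bounds, with the borderline case $\sigma=\frac2{n-2}$ relying on $t^\sigma\log\frac1t\to0$.
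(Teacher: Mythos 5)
Your argument is correct and follows essentially the same route as the paper's proof: the Brezis--Lions representation (Lemma \ref{A.1}) with a near/far splitting of the Newtonian potential, an equal-mass rearrangement bound for the inner integral in parts (i)--(ii) (the paper encodes this by defining $r_j$ through the mass identity \eqref{eq4.21.2} and comparing with the constant-density ball), and Corollary \ref{cor1} with $\|\mu\|_{L^1}\to 0$ and $\|\mu\|_{L^\infty}\lesssim |x|^{-\gamma}$ for part (iii), including the same treatment of the borderline case $\sigma=\frac{2}{n-2}$ via $t^\sigma\log\frac1t\to0$. The only differences are cosmetic: you work at a fixed point $x_0$ instead of along sequences, your rearrangement step makes the paper's WLOG reduction to $\gamma\ge n$ in part (i) unnecessary, and you substitute the $L^\infty$ bound directly rather than passing through the radius $r_j$ in the logarithmic case.
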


\begin{proof}
  Let $\{x_j\}\subset B_\varepsilon (0)\backslash \{0\}$ be a sequence
  which converges to the origin.  It suffices to prove \eqref{eq4.16},
  \eqref{eq4.17}, \eqref{eq4.19}, and \eqref{eq4.20} with $x$ replaced
  with $x_j$.
 
  For the proof of part (i) we can assume $\gamma\geq n$ because
  increasing $\gamma$ to $n$ weakens condition \eqref{eq4.15} and does
  not change the estimate \eqref{eq4.16}.
 
 By \eqref{eq4.15} there exists $A>0$ such that
 \begin{equation*}
  -\Delta u(y)<\frac{A}{(2|y|)^\gamma} \left(\log\frac{1}{2|y|}\right)^q \quad\text{ for }0<|y|<2\varepsilon.
 \end{equation*}
 Thus
 \begin{equation}\label{eq4.21.1}
  -\Delta u(y)\leq \frac{A}{|x_j|^\gamma} \left( \log\frac{1}{|x_j|}\right)^q \quad\text{ for }|y-x_j|<\frac{|x_j|}{2}.
 \end{equation}
 Define $r_j\ge 0$ by
 \begin{equation}\label{eq4.21.2}
  \int_{|y-x_j|<r_j} \frac{A}{|x_j|^\gamma}
  \left(\log\frac{1}{|x_j|}\right)^q
  \,dy=\int_{|y-x_j|<\frac{|x_j|}{2}} -\Delta u(y)\,dy\to0
  \quad\text{as } j\to\infty
 \end{equation}
 by Lemma \ref{A.1}.  Then
 \begin{equation}\label{eq4.22}
  r_j =o\left( |x_j|^{\frac{\gamma}{n}}
    \left(\log\frac{1}{|x_j|}\right)^{-\frac{q}{n}}\right)<<|x_j|\quad
  \text{as }j\to\infty
 \end{equation}
 because $\gamma\geq n$ and $q\geq 0$.  Hence by Lemma \ref{A.1} and
 \eqref{eq4.21.1} we have
 \begin{align*}
  u(x_j)&\leq C\left[ \frac{1}{|x_j|^{n-2}}
    +\int_{|y-x_j|>\frac{|x_j|}{2},\,|y|<2\varepsilon} \frac{-\Delta
      u(y)\,dy}{|y-x_j|^{n-2}} +\int_{|y-x_j|<\frac{|x_j|}{2}}
    \frac{-\Delta u(y)\,dy}{|y-x_j|^{n-2}} \right]\\
&\leq C\left[ \frac{1}{|x_j|^{n-2}}
     +\int_{|y-x_j|<\frac{|x_j|}{2}}
    \frac{-\Delta u(y)\,dy}{|y-x_j|^{n-2}} \right]\\
&\leq C \left[ \frac{1}{|x_j|^{n-2}} +\int_{|y-x_j|<r_j} \frac{A\left( \frac{1}{|x_j|}\right)^\gamma \left(\log \dfrac{1}{|x_j|}\right)^q}{|y-x_j|^{n-2}}\,dy\right]\\
  &\leq C \left[ \left(\frac{1}{|x_j|}\right)^{n-2} +\left(\frac{1}{|x_j|}\right)^\gamma \left(\log\frac{1}{|x_j|}\right)^q r^{2}_{j} \right]\\
  &\leq C \left[ \left(\frac{1}{|x_j|}\right)^{n-2} +o\left( \left(\frac{1}{|x_j|}\right)^\gamma \left(\log\frac{1}{|x_j|}\right)^q \right)^{\frac{n-2}{n}} \right]
 \quad\text{as } j\to\infty
\end{align*}
 by \eqref{eq4.22}, which proves parts (i) and (ii).
 
 We now prove part (iii).  For $|x-x_j|<\frac{|x_j|}{4}$ we have by
 \eqref{eq4.18} and Lemma \ref{A.1} that
 \begin{align*}
  \frac{-\Delta v(x)}{|x|^{-\beta}} &\leq \left(u(x)+|x|^{-(n-2)}\right)^\sigma\\ 
&\leq C\left[ \frac{1}{|x_j|^{n-2}} +\int_{|y-x_j|<\frac{|x_j|}{2}} \frac{-\Delta u(y)\,dy}{|y-x|^{n-2}} +\int_{|y-x_j|>\frac{|x_j|}{2}, \, |y|<2\varepsilon} 
  \frac{-\Delta u(y)\,dy}{|y-x|^{n-2}} \right]^\sigma \\
  &\leq C\left[ \frac{1}{|x_j|^{\sigma(n-2)}} 
+\left( \left( N_{B_{\frac{|x_j|}{2}}(x_j)} (-\Delta u)\right) (x)\right)^\sigma \right] 
 \end{align*}
 where $(N_\Omega f)(x):=\int_{\Omega}\frac{f(y)}{|y-x|^{n-2}}\,dy$.
 
 Thus by Lemma \ref{A.1}
 \begin{align}\label{eq4.23}
  \notag v(x_j)&\leq C\left[ \frac{1}{|x_j|^{n-2}} +\int_{|y-x_j|<\frac{|x_j|}{4}} \frac{\Delta v(y)}{|y-x_j|^{n-2}}\,dy+\int_{|y-x_j|>\frac{|x_j|}{4}, \, |y|<2\varepsilon} \frac{-\Delta v(y)}{|y-x_j|^{n-2}}\,dy\right]\\
  &\leq C \left[ \frac{1}{|x_j|^{n-2}} 
+\frac{|x_j|^{-\beta}}{|x_j|^{\sigma(n-2)-2}}
+|x_j|^{-\beta}(H(-\Delta u))(x_j) \right]
 \end{align}
 where $Hf=N_{B_{\frac{|x_j|}{4}}(x_j)} \left( N_{B_{\frac{|x_j|}{2}}(x_j)} f\right)^\sigma$.
 
 \begin{description}
  \item[Case I.] Suppose $(n-2)\sigma>2$.  Then using 
\eqref{eq4.21.1} and  \eqref{eq4.21.2}
 with $q=0$ in Corollary \ref{cor1} we get 
\[
(H(-\Delta u))(x_j)
=o\left( \frac{1}{|x_j|^{\frac{\gamma}{n}(\sigma(n-2)-2)}} \right) 
\quad\text{ as }j\to\infty.
\]
Thus \eqref{eq4.19} follows from \eqref{eq4.23}.
\item[Case II.] Suppose $(n-2)\sigma=2$. Then using \eqref{eq4.21.1},
  \eqref{eq4.21.2}, and \eqref{eq4.22} with $q=0$ in Corollary
  \ref{cor1} we get
\begin{align*}
(H(-\Delta u))(x_j)&\le C\left(r_j^n\frac{A}{|x_j|^\gamma}\right)^\sigma
\log\left(\frac{C|x_j|^n\left(A/|x_j|^\gamma\right)}{r_j^nA/|x_j|^\gamma}\right)\\
&=C|x_j|^{n\sigma-\gamma\sigma}\left(\frac{r_j}{|x_j|}\right)^{n\sigma}
\log\left(C\left(\frac{|x_j|}{r_j}\right)^n\right)\\
&=o\left(|x_j|^{n\sigma-\gamma\sigma}|x_j|^{(\gamma-n)\sigma}
\log\frac{1}{|x_j|^{\gamma-n}}\right)\\
&=o\left(\log\frac{1}{|x_j|}\right) \quad\text{as }j\to\infty.
\end{align*}
 Thus \eqref{eq4.20} follows from \eqref{eq4.23}.
 \end{description} 
\end{proof}

\begin{lem}\label{lem4.5}
 Suppose $u(x)$ and $v(x)$ are $C^2$ nonnegative solutions of the
 system 
 \begin{equation}\label{eq4.25}
  0\leq -\Delta u
 \end{equation}
 \begin{equation}\label{eq4.26}
  0\leq -\Delta v\leq |x|^{-\beta} \left(u+|x|^{-(n-2)}\right)^\sigma
 \end{equation}
 in a punctured neighborhood of the origin in $\mathbb{R}^n$, $n\geq
 3$, where $\beta\in\mathbb{R}$.
\begin{enumerate}
\item If $0\le\sigma<\frac{2}{n-2}$ then
 \begin{equation}\label{eq4.27}
 v(x)=O\left(|x|^{-(n-2)}+|x|^{2-(n-2)\sigma-\beta}\right)\quad
 \text{as } x\to 0. 
 \end{equation}
\item If $\sigma$ and $\lambda$ satisfy \eqref{eq3.21} and 
\begin{equation}\label{eq4.28}
 -\Delta u\leq|x|^{-\alpha} \left(v+|x|^{-(n-2)} \right)^\lambda
 \end{equation}
in a punctured neighborhood of the origin, where
$\alpha\in\mathbb{R}$, then for some $\gamma>n$ we have
\begin{equation}\label{eq4.29}
  -\Delta u(x)=O\left(|x|^{-\gamma}\right) \quad \text{ as }x\to 0.
 \end{equation}
\end{enumerate}
\end{lem}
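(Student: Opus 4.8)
The plan for part (i) is to combine the Brezis--Lions representation (Lemma~\ref{A.1}) with a localization at the origin and, at its heart, a \emph{subcritical} $L^\infty$-bound for the nonlinear potential $N\big((N\nu)^\sigma\big)$. Fix a punctured ball $B_\varepsilon(0)$ on which $u,v$ are defined, set $Ng(x)=\int_{B_\varepsilon}|x-y|^{2-n}g(y)\,dy$ and let $\mu=-\Delta u$; by Lemma~\ref{A.1}, $\mu$ is a nonnegative measure of finite mass $M:=\mu(B_\varepsilon)<\infty$ (a nonnegative superharmonic function on a punctured ball lies in $L^1_{\mathrm{loc}}$, and the atom at $0$, if any, is absorbed into the term below) and $u(y)\le C\big(|y|^{2-n}+N\mu(y)\big)$ near $0$, whence $-\Delta v(y)\le|y|^{-\beta}\big(u(y)+|y|^{2-n}\big)^\sigma\le C|y|^{-\beta}\big(|y|^{-(n-2)\sigma}+(N\mu(y))^\sigma\big)$. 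Applying Lemma~\ref{A.1} to $v$ and splitting the resulting potential of $-\Delta v$ at $x$ into a piece over $B_{|x|/2}(x)$ and a complementary piece reduces everything to two estimates. On $B_{|x|/2}(x)$ one has $|y|\simeq|x|$, so the $|y|^{-(n-2)\sigma}$ term contributes $O(|x|^{2-(n-2)\sigma-\beta})$ by Lemma~\ref{lem4.2}; for the $(N\mu)^\sigma$ term I would use the scale-invariant bound, valid exactly because $0\le\sigma<\tfrac{2}{n-2}$: for a nonnegative measure $\nu$ supported in a ball $B_\rho$,
\[
\big\|\,N\big((N\nu)^\sigma\big)\,\big\|_{L^\infty(B_\rho)}\le C\,\rho^{\,2-(n-2)\sigma}\,\nu(B_\rho)^{\sigma},\qquad C=C(n,\sigma).
\]
This is the analogue of Corollary~\ref{cor1} \emph{below} the exponent $\tfrac{2}{n-2}$: $N\nu$ lies in the weak space $L^{\frac{n}{n-2},\infty}$ with norm $\lesssim\nu(B_\rho)$, so $(N\nu)^\sigma\in L^{q}(B_\rho)$ for some $q>\tfrac n2$ — this is precisely where $\sigma<\tfrac{2}{n-2}$ enters — and then $N$ maps it into $L^\infty(B_\rho)$; a scaling computation pins the $\rho$-power at $2-(n-2)\sigma$.

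Applying the displayed estimate with $\nu$ the restriction of $\mu$ to $B_{|x|}(x)$, whose mass tends to $0$ as $x\to0$ (a tail of the finite measure $\mu$), and multiplying by $|x|^{-\beta}$, the $B_{|x|/2}(x)$-piece of $v(x)$ is $O(|x|^{2-(n-2)\sigma-\beta})$. The complementary piece is handled by a cruder, global estimate: on $|y-x|>\tfrac{|x|}2$, $|y|<2|x|$ one uses $|y-x|^{2-n}\lesssim|x|^{2-n}$ together with the classical spherical-average bound $|\partial B_r|^{-1}\int_{\partial B_r}u=O(r^{2-n})$ (again from Lemma~\ref{A.1}) to control $\int_{|y|\simeq|x|}(-\Delta v)$, and for $|y|>2|x|$ one sums $\int_{\{|y|\simeq\rho\}}(-\Delta v)\lesssim\rho^{\,2-(n-2)\sigma-\beta}$ over dyadic scales $2|x|\le\rho\le\varepsilon$; this gives $O\big(|x|^{2-n}+|x|^{2-(n-2)\sigma-\beta}\big)$, and \eqref{eq4.27} follows. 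The main obstacle in (i) is that $u$ is given \emph{no} pointwise bound — a superharmonic function can be pointwise arbitrarily large at points clustering at $0$ — so $u$ cannot be estimated pointwise and must be carried inside a nonlinear potential; the crux is the subcritical estimate above, whose scaling has to dovetail with the weight $|x|^{-\beta}$ to recover exactly the exponent $2-(n-2)\sigma-\beta$.

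For part (ii) the plan is to reduce everything to one polynomial pointwise bound: if $v(x)=O(|x|^{-\tau})$ for some finite $\tau\ge n-2$, then \eqref{eq4.28} gives $-\Delta u(x)\le|x|^{-\alpha}\big(v(x)+|x|^{2-n}\big)^\lambda=O\big(|x|^{-(\alpha+\lambda\tau)}\big)$, and one takes $\gamma:=\max\{\alpha+\lambda\tau,\,n+1\}>n$, which is \eqref{eq4.29}. When $\sigma<\tfrac{2}{n-2}$ such a $\tau$ is furnished at once by part~(i), namely $\tau=\max\{n-2,\,\beta+(n-2)\sigma-2\}$. When $\sigma\ge\tfrac{2}{n-2}$, hypothesis \eqref{eq3.21} forces $\tfrac{2}{n-2}\le\sigma<\tfrac{n}{n-2}$ (and $\lambda<\tfrac{n}{(n-2)\sigma-2}$ if $\sigma>\tfrac{2}{n-2}$), and I would proceed in two stages. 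First, extract a \emph{crude} finite bound $-\Delta u=O(|x|^{-\gamma_0})$, $-\Delta v=O(|x|^{-\delta_0})$ from the coupling of \eqref{eq4.26} and \eqref{eq4.28}: bound $u,v$ by $|x|^{2-n}$ plus Newtonian potentials of $-\Delta u$, $-\Delta v$ (Lemma~\ref{A.1}), localize as in Lemma~\ref{lem4.4}, and control the resulting nonlinear potentials of type $N\big((N(-\Delta u))^\sigma\big)$ by Corollary~\ref{cor1} (legitimate since $\sigma\ge\tfrac{2}{n-2}$) and the Wolff-potential equivalence noted after Theorem~\ref{thm1} (legitimate since $\sigma<\tfrac{n}{n-2}$), the continuity of $-\Delta u$, $-\Delta v$ on the punctured ball supplying the local regularity needed to keep these potentials finite; intuitively, a "bump" of $u$ near a point would force, through \eqref{eq4.28}, a matching blow-up of $v$, and conversely through \eqref{eq4.26}, and in this parameter range the two constraints are jointly satisfiable only up to a polynomial rate. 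Second, from $-\Delta u=O(|x|^{-\gamma_k})$ with $\gamma_k\ge n$, Lemma~\ref{lem4.4}(i) gives $u=O\big(|x|^{-\frac{n-2}{n}\gamma_k}\big)$ (with a genuine little-$o$ gain when $\gamma_k>n$), hence $-\Delta v=O\big(|x|^{-(\beta+\frac{n-2}{n}\sigma\gamma_k)}\big)$, and symmetrically $-\Delta u=O\big(|x|^{-(\alpha+\frac{n-2}{n}\lambda\delta_k)}\big)$; iterating this finitely many times pushes the exponent past $n$.

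The main obstacle in part (ii) is precisely obtaining the crude initial pointwise bound when $\sigma\ge\tfrac{2}{n-2}$: superharmonicity alone controls only the spherical averages of $u$ and $v$, not their pointwise values, so the starting polynomial control must be mined from the two inequalities \emph{jointly}, via the localization-plus-nonlinear-potential mechanism above; once that toe-hold is in place, the finite bootstrap through Lemma~\ref{lem4.4}(i) is routine.
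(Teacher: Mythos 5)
Your part (i) is essentially sound. It takes a somewhat different route from the paper, which rescales to unit scale around $x_j$, uses that the localized masses of $-\Delta u,-\Delta v$ tend to $0$ (Lemma \ref{A.1}), and controls $\int_{B_1}\Gamma\,(N_2f_j)^\sigma$ by $L^p$ Riesz-potential estimates plus H\"older; your subcritical sup-bound $\|N((N\nu)^\sigma)\|_{L^\infty(B_\rho)}\le C\rho^{\,2-(n-2)\sigma}\nu(B_\rho)^\sigma$, proved via the weak $L^{\frac{n}{n-2}}$ estimate for $N\nu$ and $N:L^q\to L^\infty$ for $q>\frac n2$, is a legitimate substitute. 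Two small points: on $B_{|x|/2}(x)$ the potential $N\mu$ also contains the contribution of $\mu$ from outside $B_{|x|}(x)$, which you must (and easily can) bound by $C|x|^{2-n}$ before invoking your estimate for the restricted measure; and your dyadic tail claim $\int_{|y|\simeq\rho}(-\Delta v)\lesssim\rho^{\,2-(n-2)\sigma-\beta}$ is both unnecessary (the finite total mass from Lemma \ref{A.1} already gives $O(|x|^{2-n})$ for the far piece, since $\sum\rho^{2-n}\lesssim|x|^{2-n}$) and not a consequence of spherical averages alone when $n=3$ and $1<\sigma<2$.

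The genuine gap is in part (ii), in the regime $\sigma\ge\frac{2}{n-2}$, exactly at what you call the first stage. Obtaining the crude polynomial bound on $-\Delta u$ is the entire difficulty, and your plan for it is circular as stated: Corollary \ref{cor1} bounds $\|N((Ng)^\sigma)\|_\infty$ by $C\|g\|_{L^1}^{\frac{2\sigma+2}{n}}\|g\|_{L^\infty}^{\frac{(n-2)\sigma-2}{n}}$, so with $g$ the localized $-\Delta u$ its right-hand side contains precisely the unknown sup of $-\Delta u$; the Wolff-potential equivalence has the same feature, and continuity of $-\Delta u$ on the punctured ball gives finiteness at each fixed point but no uniform rate as $x\to0$, so it cannot supply the missing input. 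What is needed is a quantitative device to close the resulting self-referential inequality, and this is where \eqref{eq3.21} must enter: chasing exponents after localization at scale $|x_j|$ one arrives at an estimate of the shape $\sup(-\Delta u)\lesssim|x_j|^{-c}\bigl(1+\varepsilon_j^{\theta_1}\,[\sup(-\Delta u)]^{\theta}\bigr)$ with $\theta=\lambda\frac{(n-2)\sigma-2}{n}$, and $\theta<1$ if and only if $\sigma<\frac{2}{n-2}+\frac{n}{n-2}\frac1\lambda$; moreover the two sups live on different nested balls, so even then one needs an absorption argument over nested radii. The paper avoids the pointwise bootstrap altogether and instead proves Lemma \ref{lem4.6}, a Moser-type iteration in $L^p$: starting from the $L^1$ bound furnished by Lemma \ref{A.1}, each step gains a fixed amount $C_0(n,\lambda,\sigma)>0$ of integrability, the positivity of $C_0$ being exactly where \eqref{eq3.21} is used, and finitely many steps give an $L^\infty$ bound for $r_j^\gamma f_j$, i.e.\ \eqref{eq4.29}. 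None of this bookkeeping, nor the condition $\theta<1$, nor any absorption or iteration mechanism appears in your sketch; the sentence that in this parameter range the constraints are jointly satisfiable only up to a polynomial rate restates the conclusion rather than proving it. (Your second stage, bootstrapping with Lemma \ref{lem4.4}(i), is harmless but superfluous: as you note at the outset, any finite polynomial bound already yields \eqref{eq4.29} with $\gamma=\max\{\gamma_0,n+1\}$.)
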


\begin{proof}
Choose $\varepsilon\in(0,1)$ such that $u(x)$ and $v(x)$ are $C^2$
  nonnegative solutions of the system (\ref{eq4.25}, \ref{eq4.26}) in
    $B_{2\varepsilon}(0)\backslash \{0\}$.
Let  $\{x_j\}^{\infty}_{j=1}$ be a sequence in $\mathbb{R}^n$ such that
\begin{equation*}
   0<4|x_{j+1}|<|x_j|<\varepsilon/2.
  \end{equation*}
It suffices to prove \eqref{eq4.27} and \eqref{eq4.29} with $x$
replaced with $x_j$.

By Lemma \ref{A.1} we have
  \begin{equation}\label{eq4.30}
   \int_{|x|<\varepsilon} -\Delta u(x)\,dx<\infty \quad\text{and}\quad
   \int_{|x|<\varepsilon} -\Delta v(x)\,dx<\infty
  \end{equation}
  and, for $|x-x_j|<\frac{|x_j|}{4}$,
  \begin{equation}\label{eq4.31}
   u(x)\leq C\left[ \frac{1}{|x_j|^{n-2}} +\int_{|y-x_j|<\frac{|x_j|}{2}} \frac{1}{|x-y|^{n-2}} (-\Delta u(y))\,dy\right] 
  \end{equation}
  \begin{equation}\label{eq4.32}
   v(x)\leq C\left[ \frac{1}{|x_j|^{n-2}} +\int_{|y-x_j|<\frac{|x_j|}{2}} \frac{1}{|x-y|^{n-2}} (-\Delta v(y))\,dy\right] 
  \end{equation}
  where $C>0$ does not depend on $j$ or $x$.
  
  By \eqref{eq4.30}, we have as $j\to\infty$ that
  \begin{equation}\label{eq4.33}
   \int_{|y-x_j|<\frac{|x_j|}{2}} -\Delta u(y)\,dy\to0 \quad\text{and}
   \quad \int_{|y-x_j|<\frac{|x_j|}{2}} -\Delta v(y)\,dy\to0.
  \end{equation}
  Define $f_j ,g_j:\overline{B_2 (0)}\to[0,\infty)$ by
  \begin{equation*}
   f_j(\xi)=-r^{n}_{j} \Delta u(x_j +r_j \xi) \quad\text{and}\quad g_j (\xi)=-r^{n}_{j} \Delta v(x_j +r_j \xi)
  \end{equation*}
  where $r_j=|x_j|/4$.  Making the change of variables $y=x_j +r_j
  \zeta$ in \eqref{eq4.33}, \eqref{eq4.32}, and \eqref{eq4.31}  we get
  \begin{equation}\label{eq4.34}
   \int_{|\zeta|<2} f_j (\zeta)\,d\zeta\to0 \quad\text{and}\quad\int_{|\zeta|<2} g_j (\zeta)\,d\zeta\to0,
  \end{equation}
  and
  \begin{equation}\label{eq4.35}
   v(x_j +r_j \xi)\leq \frac{C}{r^{n-2}_{j}} \left[
     1+(N_2g_j)(\xi)\right] \quad\text{for}\quad |\xi|<1
  \end{equation}
  \begin{equation}\label{eq4.36}
   u(x_j +r_j \xi)\leq \frac{C}{r^{n-2}_{j}} \left[
     1+(N_2f_j)(\xi)\right] \quad\text{for}\quad|\xi|<1.
  \end{equation}
 where $(N_Rf)(\xi):=\int_{|\zeta|<R}|\xi-\zeta|^{-(n-2)}f(\zeta)\,\,d\zeta$. 

We now prove part (i). If $\sigma=0$ then part (i) follows from Lemma
\ref{lem4.4}(i). Hence we can assume $0<\sigma<2/(n-2)$. Define
$\varepsilon\in (0,1)$ and $\gamma>0$ by
\[
\sigma=\frac{2}{n-2}(1-\varepsilon)^2\quad \text{and} \quad
\gamma=\frac{n}{n-2}(1-\varepsilon).
\]
It follows from \eqref{eq4.34} and Riesz potential estimates (see
\cite[Lemma 7.12]{GT}) that $N_2f_j\to 0$ in $L^\gamma(B_2(0))$ and
hence 
\[
(N_2f_j)^\sigma\to 0\quad \text{in } L^\frac{n}{2(1-\varepsilon)}(B_2(0)).
\]
Thus by H\"older's inequality
\begin{equation}\label{eq4.37}
\int_{B_1(0)}\Gamma(N_2f_j)^\sigma\,d\xi\le
\|\Gamma\|_\frac{n}{n+2\varepsilon-2} 
\|(N_2f_j)^\sigma\|_\frac{n}{2(1-\varepsilon)}\to 0
\end{equation}  
where $\Gamma$ is given by \eqref{eq1.5}.
By \eqref{eq4.35} and \eqref{eq4.34} we have
\begin{align}\notag
v(x_j)&\le\frac{C}{|x_j|^{n-2}}\left(1+\int_{B_2(0)}\Gamma g_j\,d\xi\right)\\
      &\le\frac{C}{|x_j|^{n-2}}\left(1+\int_{B_1(0)}\Gamma g_j\,d\xi\right)
\label{eq4.38}
\end{align}
and for $|\xi|<1$ it follows from \eqref{eq4.26} and \eqref{eq4.36} that
\begin{align}\notag
g_j(\xi)&=r_j^n(-\Delta v(x_j+r_j\xi))\\
\notag  &\le Cr_j^n|x_j|^{-\beta}\left(u(x_j+r_j\xi)+|x_j|^{-(n-2)}\right)^\sigma\\
\label{eq4.39} &\le C|x_j|^{n-\beta-(n-2)\sigma}
\left(1+((N_2f_j)(\xi))^\sigma\right).
\end{align}
Substituting \eqref{eq4.39} in \eqref{eq4.38} and using
\eqref{eq4.37}, we get
\[
v(x_j)\le C\left(|x_j|^{-(n-2)}+|x_j|^{2-(n-2)\sigma-\beta}\right)
\]
which completes the proof of part (i).

Next we prove part (ii).
Since increasing $\lambda$ and/or $\sigma$ weakens the conditions
   (\ref{eq4.26}, \ref{eq4.28}) on $u$ and $v$ we can assume instead of
   \eqref{eq3.21} that
  \begin{equation}\label{eq4.40}
   \lambda\geq\sigma\geq\frac{2}{n-2}\quad\text{ and }\quad\sigma<\frac{2}{n-2}+\frac{n}{n-2}\frac{1}{\lambda}.
  \end{equation}
Since, for $R\in(0,\frac{1}{2}]$,
  \begin{equation*}
   \int_{2R<|\zeta|<2} \frac{g_j (\zeta)\,d\zeta}{|\xi-\zeta|^{n-2}}
   \leq \frac{1}{R^{n-2}} \int_{|\zeta|<2} g_j (\zeta)\,d\zeta
   \quad\text{for }|\xi|<R
  \end{equation*}
  and
  \begin{equation*}
   \int_{4R<|\eta|<2} \frac{f_j (\eta)\,d\eta}{|\zeta-\eta|^{n-2}} \leq
   \frac{1}{(2R)^{n-2}} \int_{|\eta|<2} f_j (\eta)\,d\eta 
\quad\text{for }|\zeta|<2R,
  \end{equation*}
  it follows from \eqref{eq4.34}, \eqref{eq4.35} and \eqref{eq4.36} that for
  $R\in(0,\frac{1}{2}]$ we have
  \begin{equation*}
   v(x_j +r_j \xi)\leq Cr^{2-n}_{j} \left[ \frac{1}{R^{n-2}} 
+N_{2R}g_j(\xi)\right] \quad\text{for }|\xi|<R
  \end{equation*}
  and
  \begin{equation*}
   u(x_j +r_j \zeta)\leq Cr^{2-n}_{j} \left[ \frac{1}{R^{n-2}} 
+N_{4R}f_j(\zeta)\right] \quad\text{for }|\zeta|<2R
  \end{equation*}
  where $C$ is independent of $\xi$, $\zeta$, $j$, and $R$.
  It therefore follows from (\ref{eq4.26}, \ref{eq4.28}) that for
  $R\in(0,\frac{1}{2}]$ we have
  \begin{align}\label{eq4.41}
   \notag r^{-n}_{j}f_j (\xi)&=-\Delta u(x_j +r_j \xi)\\
   \notag &\leq Cr^{-\alpha}_{j} \left( r^{2-n}_{j} \left[ \frac{1}{R^{n-2}} +(N_{2R} g_j )(\xi) \right] \right)^\lambda\\
   &\leq Cr^{-\alpha-(n-2)\lambda}_{j} \left[
     \frac{1}{R^{(n-2)\lambda}} +\left( (N_{2R}
       g_j)(\xi)\right)^\lambda \right] \quad \text{ for }|\xi|<R,
  \end{align}
  and
  \begin{align*}
   r^{-n}_{j} g_j (\zeta)&=-\Delta v(x_j +r_j \zeta)\\
   &\leq Cr^{-\beta}_{j} \left( r^{2-n}_{j} \left[ \frac{1}{R^{n-2}} +(N_{4R} f_j )(\zeta)\right] \right)^\sigma \\
   &\leq Cr^{-b}_{j} \left[ \frac{1}{R^{(n-2)\sigma}} +\left( (N_{4R} f_j)(\zeta) \right)^\sigma \right] \quad\text{ for }|\zeta|<2R,
  \end{align*}
  where $b=\beta+(n-2)\sigma$.  Thus for $\xi\in\mathbb{R}^n$ we have
  \begin{align*}
   \left( (N_{2R} g_j )(\xi)\right)^\lambda &\leq \left( Cr^{n-b}_{j} N_{2R} \left[ \frac{1}{R^{(n-2)\sigma}} +(N_{4R} f_j )^\sigma \right](\xi)\right)^\lambda\\
   &\leq Cr^{(n-b)\lambda}_{j} \left[ R^{(2-(n-2)\sigma)\lambda} +\left( (M_{4R} f_j )(\xi)\right)^\lambda \right] 
  \end{align*}
  where $M_R f_j :=N_R \left( (N_R f_j)^\sigma \right)$.  Hence by
  \eqref{eq4.41} there exists a positive constant $a$ which depends
  only on $n$, $\alpha$, $\beta$, $\lambda$, and $\sigma$ such that
  \begin{equation}\label{eq4.42}
   f_j (\xi)\leq C\frac{1}{(Rr_j )^a} \left(1+\left( (M_{4R} f_j)(\xi)\right)^\lambda \right) \quad\text{for }|\xi|<R\leq\frac{1}{2}.
  \end{equation}
By \eqref{eq4.40} there exists
$\varepsilon=\varepsilon(n,\lambda,\sigma)\in (0,1)$ such that 
\begin{equation}\label{eq4.43}
\sigma<\frac{n}{n-2+\varepsilon} \quad \text{and}
\quad\sigma<\frac{2-\varepsilon}{n-2+\varepsilon} + 
\frac{n}{n-2+\varepsilon}\frac{1}{\lambda}.
\end{equation}

To prove for some $\gamma>n$ that \eqref{eq4.29} holds with $x=x_j$,
it suffices by the definition of $r_j$ and $f_j$ to show for some
$\gamma>0$ that the sequence
\begin{equation}\label{eq55.0}
\{r_j^\gamma f_j(0)\} \quad \text{is bounded.}
\end{equation}
To prove \eqref{eq55.0} and thereby complete the proof of Lemma
\ref{lem4.5}(ii), we need the following result.

\begin{lem}\label{lem4.6}
Suppose the sequence
\begin{equation}\label{eq4.45}
\{r_j^{\alpha}f_j\} \quad \text{is bounded in } L^p(B_{4R}(0))
\end{equation}
for some constants $\alpha\ge 0$, $p\in [1,\infty)$, and
$R\in(0,\frac{1}{2}]$.  Let $\beta=\alpha\lambda\sigma+a$ where $a$ is
as in \eqref{eq4.42}. Then there exists a constant 
$C_0=C_0 (n, \lambda,\sigma)>0$ such that the sequence
\begin{equation}\label{eq4.45.5}
\{r_j^{\beta}f_j\} \quad \text{is bounded in } L^q(B_{R}(0))
\end{equation}
provided $q\in[1,\infty]$ and
\begin{equation}\label{small}
 \frac{1}{p}-\frac{1}{q}\le C_0.
\end{equation}
\end{lem}

\begin{proof}
It follows from \eqref{eq4.42} that 
\begin{equation}\label{eq55.4}
r_j^\beta f_j(\xi)\le\frac{C}{R^a}\left(1+((M_{4R}(r_j^\alpha
  f_j))(\xi))^\lambda\right) \quad\text{for } |\xi|<R.
\end{equation}

We can assume
\begin{equation}\label{eq55.5}
p\le n/2
\end{equation}
for otherwise it follows from Riesz potential estimates (see 
\cite[Lemma 7.12]{GT}) and
\eqref{eq4.45} that the sequence $\{N_{4R}(r_j^\alpha f_j)\}$ is bounded in
$L^\infty (B_{4R}(0))$ and hence by \eqref{eq55.4} we see that
\eqref{eq4.45.5} holds for all $q\in[1,\infty]$. 

Define $p_2$ by 
\begin{equation}\label{eq4.46}
  \frac{1}{p}-\frac{1}{p_2}=\frac{2-\varepsilon}{n}.
 \end{equation}
where $\varepsilon=\varepsilon(n,\lambda,\sigma)$ is as in
\eqref{eq4.43}. By \eqref{eq55.5}, $p_2\in (p,\infty)$
and by Riesz potential estimates we have
\begin{equation}\label{eq4.47}
  \Vert(N_{4R}f_j)^\sigma \Vert_{p_2/\sigma}
=\Vert N_{4R}f_j \Vert^{\sigma}_{p_2} \leq C\Vert f_j\Vert^{\sigma}_{p}
 \end{equation}
where $\Vert \cdot \Vert_p:=\Vert \cdot \Vert_{L^p (B_{4R}(0))}$. 
Since, by \eqref{eq4.43},
\[
\frac{1}{p_2}=\frac{1}{p}-\frac{2-\varepsilon}{n}\le 1-\frac{2-\varepsilon}{n}
=\frac{n-2+\varepsilon}{n}< \frac{1}{\sigma}
\]
we have 
\begin{equation}\label{eq55.5.5}
p_2/\sigma>1. 
\end{equation}

We can assume 
\begin{equation}\label{eq55.6}
p_2/\sigma\le n/2
\end{equation}
for otherwise by Riesz potential estimates and \eqref{eq4.47} we have
\[
\|M_{4R}(r_j^\alpha f_j)\|_\infty\le C\|(N_{4R}(r_j^\alpha
f_j))^\sigma\|_{p_2/\sigma}\le C\|r_j^\alpha f_j\|_p^\sigma
\]
which is bounded by \eqref{eq4.45}. Hence \eqref{eq55.4} implies
\eqref{eq4.45.5} holds for all $q\in [1,\infty]$.

Define $p_3$ and $q$ by 
\begin{equation}\label{eq4.48}
\frac{\sigma}{p_2}-\frac{1}{p_3}=\frac{2-\varepsilon}{n}
\quad\text{and}\quad q=\frac{p_3}{\lambda}.
\end{equation}
By \eqref{eq55.5.5} and \eqref{eq55.6}, $p_3\in(1,\infty)$ and by
Riesz potential estimates
\begin{align*}
  \Vert \left(M_{4R}f_j\right)^\lambda \Vert_q &=\Vert M_{4R}f_j\Vert^{\lambda}_{p_3}\\
  &\leq C\Vert(N_{4R}f_j)^\sigma \Vert^{\lambda}_{p_2/\sigma} 
\leq C\Vert f_j \Vert^{\lambda\sigma}_{p}
 \end{align*}
by \eqref{eq4.47}. 
It follows therefore from \eqref{eq55.4} that 
\[
\|r_j^\beta f_j\|_{L^q(B_R(0))}
\le\frac{C}{R^a}\left(1+\|r_j^\alpha f_j\|_p^{\lambda\sigma}\right)
\]
which is a bounded sequence by \eqref{eq4.45}. To complete the proof of Lemma
\ref{lem4.6}, it suffices to show
\begin{equation}\label{eq55.7}
\frac{1}{p}-\frac{1}{q}\ge C_0
\end{equation}
for some $C_0=C_0(n,\lambda,\sigma)>0$ because if \eqref{eq4.45.5} holds
for some $q\ge 1$ satisfying \eqref{eq55.7} then it clearly holds for
all $q\ge 1$ satisfying \eqref{small}.

By \eqref{eq4.46} and \eqref{eq4.48} we have 
 \begin{align}\label{eq4.49}
  \notag \frac{1}{p}-\frac{1}{q}&=\frac{1}{p}-\frac{\lambda}{p_3}
=\frac{1}{p}+\frac{(2-\varepsilon)\lambda}{n}-\frac{\lambda\sigma}{p_2}\\
  \notag &=\frac{1}{p}+\frac{(2-\varepsilon)\lambda}{n}+\frac{(2-\varepsilon)\lambda\sigma}{n}-\frac{\lambda\sigma}{p}\\
  &=-\frac{\lambda\sigma-1}{p}+\frac{(2-\varepsilon)\lambda\sigma+(2-\varepsilon)\lambda}{n}.
 \end{align}
 \begin{description}
  \item[Case I.] Suppose $\lambda\sigma\leq 1$.  Then by \eqref{eq4.49}
 and \eqref{eq4.40}
  \begin{equation*}
   \frac{1}{p}-\frac{1}{q}\geq \frac{(2-\varepsilon)\lambda\sigma+(2-\varepsilon)\lambda}{n} \geq C_1(n)>0.
  \end{equation*}
  \item[Case II.] Suppose $\lambda\sigma>1$.  Then, by \eqref{eq4.49},
  \begin{align*}
   \frac{1}{p}-\frac{1}{q}&\geq 1-\sigma\lambda+
\frac{(2-\varepsilon)\lambda\sigma+(2-\varepsilon)\lambda}{n}\\
   &=\frac{1}{n} [n+(2-\varepsilon)\lambda-\lambda\sigma(n-(2-\varepsilon))]\\
   &=\frac{(n-(2-\varepsilon))\lambda}{n} 
\left[ \frac{2-\varepsilon}{n-(2-\varepsilon)}+\frac{n}{n-(2-\varepsilon)} \frac{1}{\lambda}-\sigma \right]\\
   &=C_2(n,\lambda,\sigma)>0
  \end{align*}
  by \eqref{eq4.43}.  
\end{description}
Thus \eqref{eq55.7} holds with $C_0=\min(C_1,C_2)$.
This completes the proof of Lemma \ref{lem4.6}.
\end{proof}

We return now to the proof of Lemma \ref{lem4.5}(ii).  By
\eqref{eq4.34}, the sequence $\{f_j\}$ is bounded in $L^1(B_2(0))$.
Starting with this fact and iterating Lemma \ref{lem4.6} a finite
number of times ($m$ times is enough if $m>1/C_0$) we see that there
exists $R_0 \in(0,\frac{1}{2})$ and $\gamma>n$ such that sequence
$\{r_j^\gamma f_j\}$ is bounded in $L^\infty(B_{R_0}(0))$. In
particular \eqref{eq55.0} holds. This completes the proof of Lemma
\ref{lem4.5}(ii).
\end{proof}

\section{Proofs of two dimensional results}\label{sec5}

In this section we prove Theorems \ref{thm2.1}--\ref{thm2.5}.  The
following theorem with $h(t)=t^\lambda$ immediately implies Theorems
\ref{thm2.1} and \ref{thm2.3}.  We stated Theorems \ref{thm2.1} and
\ref{thm2.3} separately in order to clearly highlight the differences
between possibilities (i) and (iii) which are stated at the beginning
of Section \ref{sec2}.

\begin{thm}\label{thm5.1}
 Suppose $u(x)$ and $v(x)$ are $C^2$ positive solutions of the system
 \begin{equation}\label{eq5.1}
  0\leq -\Delta u
 \end{equation}
 \begin{equation}\label{eq5.2}
  0\leq -\Delta v\leq g(u) 
 \end{equation}
 in a punctured neighborhood of the origin in $\mathbb{R}^2$, where
 $g:(0,\infty)\to(0,\infty)$ is a continuous function satisfying
 \begin{equation}\label{eq5.3}
 \log^{+} g(t)=O(t)\quad\text{ as }t\to\infty.
 \end{equation}
 Then $v$ is harmonically bounded, that is
 \begin{equation}\label{eq5.4}
  \limsup_{x\to 0}\frac{v(x)}{\log\frac{1}{|x|}}<A
 \end{equation}
 for some constant $A>0$. 

If, in addition,
 \begin{equation}\label{eq5.5}
  -\Delta u\leq f(v)
 \end{equation}
 in a punctured neighborhood of the origin, where
 $f:(0,\infty)\to(0,\infty)$ is a continuous function satisfying
 \begin{equation}\label{eq5.55}
  \log^{+} f(t)=O(h(t))\quad\text{ as }t\to\infty
 \end{equation}
 for some continuous nondecreasing function $h:(0,\infty)\to(0,\infty)$
 satisfying $\lim_{t\to\infty}h(t)=\infty$ then
 \begin{equation}\label{eq5.6}
  u(x)=O\left( \log\frac{1}{|x|} \right) 
+o\left(h\left(A\log\frac{2}{|x|}\right)\right) \quad\text{ as }x\to0.
 \end{equation}
\end{thm}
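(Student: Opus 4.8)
The plan is to establish the two assertions in turn, the engine being the Brezis--Lions representation (Lemma~\ref{A.1}) together with the two‑dimensional exponential integrability of logarithmic potentials of measures of small total mass.

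\medskip
\noindent\emph{Step 1 (harmonic boundedness of $v$).} Since $u>0$ and $v>0$ are superharmonic near the origin, Lemma~\ref{A.1} provides $\varepsilon_0\in(0,1)$, $\int_{|y|<\varepsilon_0}(-\Delta u)\,dy+\int_{|y|<\varepsilon_0}(-\Delta v)\,dy<\infty$, and constants $C_u,C_v$ (depending on $u,v$) with
\[
u(z)\le C_u\Big(1+\log\tfrac1{|z|}\Big)+\tfrac1{2\pi}N\mu(z),\qquad
v(x)\le C_v\Big(1+\log\tfrac1{|x|}\Big)+\tfrac1{2\pi}Nw(x)
\]
for $0<|x|,|z|<\varepsilon_0/2$, where $\mu:=-\Delta u\ge0$, $w:=-\Delta v\ge0$, and $Nf(\zeta):=\int_{|y|<\varepsilon_0}\log\frac{2\varepsilon_0}{|\zeta-y|}f(y)\,dy$. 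It therefore suffices to show $Nw(x)=O(\log\frac1{|x|})$ as $x\to0$. First I would use $\log^+g(t)=O(t)$ to get $g(t)\le C+Ce^{\kappa t}$ for all $t>0$ with $\kappa,C$ fixed (modulo the behaviour of $g$ at $0^+$, addressed below). Then, writing $r=|x|$, $P:=\tfrac{\kappa C_u}{2\pi}$, $Q:=\tfrac{\kappa}{2\pi}$, for $z\in B_{r^N}(x)$ (with $N$ a large integer chosen below, $r$ small, so $|z|\ge r/2$) one obtains
\[
w(z)\le g(u(z))\le C+C\,r^{-P-Qm}\,e^{Q\widetilde w(z)},\qquad m:=\|\mu\|,\quad
\widetilde w(z):=\int_{|y|<2r}\log\tfrac{2\varepsilon_0}{|z-y|}\,d\mu(y),
\]
where the part of $N\mu(z)$ coming from $|y|\ge2r$ has been absorbed into $Qm\log\frac1r$, and the total mass $\widetilde\nu_r:=\mu(B_{2r}(0))$ tends to $0$ as $r\to0$ because $\mu=-\Delta u$ has no atom at the origin.

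\medskip
\noindent\emph{Step 2 (the key estimate).} Split $Nw(x)$ at distance $\delta:=r^N$ from $x$. On $\{|x-y|\ge\delta,\ |y|<\varepsilon_0\}$ one has $\log\frac{2\varepsilon_0}{|x-y|}\le \log\frac1\delta+C$, so this part is $\le(N\log\frac1r+C)\|w\|_{L^1}=O(\log\frac1r)$. On $\{|x-y|<\delta\}$ I would use the pointwise bound above together with the elementary inequality obtained by applying Jensen's inequality to the probability measure $d\mu/\widetilde\nu_r$ (valid once $2Q\widetilde\nu_r<2$, hence for $r$ small),
\[
\int_{|y|<2r}e^{2Q\widetilde w(y)}\,dy\le C\,r^{\,2-2Q\widetilde\nu_r},
\]
so that $\|e^{Q\widetilde w}\|_{L^2(B_\delta(x))}\le C\,r^{\,1-Q\widetilde\nu_r}$; combined with $\big\|\log\tfrac{2\varepsilon_0}{|x-\cdot|}\big\|_{L^2(B_\delta(x))}\le C\delta\log\tfrac1\delta$ via Cauchy--Schwarz this yields
\[
\int_{|x-y|<\delta}\log\tfrac{2\varepsilon_0}{|x-y|}\,w(y)\,dy\le C\,r^{-P-Qm}\cdot\delta\log\tfrac1\delta\cdot r^{\,1-Q\widetilde\nu_r}
\le C N\,r^{\,N+1-P-Qm-Q\widetilde\nu_r}\log\tfrac1r .
\]
Choosing the integer $N>P+Qm$ makes the exponent positive, so this part is $o(\log\frac1r)$. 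Hence $Nw(x)=O(\log\frac1{|x|})$, whence $v(x)\le C\log\frac1{|x|}$ for $|x|$ small, i.e. \eqref{eq5.4} with $A:=C+1$.

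\medskip
\noindent\emph{Step 3 (the bound on $u$).} Now add the hypothesis $-\Delta u\le f(v)$ with $\log^+f(t)=O(h(t))$. By \eqref{eq5.4}, $v(x)<A\log\frac1{|x|}$ for $|x|$ small, and since $h$ is nondecreasing, $\log^+(-\Delta u(x))\le\log^+f(v(x))\le C\,h\big(A\log\frac1{|x|}\big)+C'$. So Lemma~\ref{lem4.3} applies with $H(s):=C\,h(As)+C'+1$ (continuous, nondecreasing, $\to\infty$), giving $u(x)=O(\log\frac1{|x|})+o\big(H(\log\frac2{|x|})\big)=O(\log\frac1{|x|})+o\big(h(A\log\frac2{|x|})\big)$, which is exactly \eqref{eq5.6}.

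\medskip
\noindent\emph{Main obstacle.} The difficulty is the exponential growth of $g$: brutally replacing $g(u)$ by $e^{\kappa u}$ and $u$ by its Brezis--Lions bound produces the factor $r^{-P-Qm}$, whose exponent need not be small, so a single Cauchy--Schwarz estimate over a ball of radius $\sim r$ would not suffice and, indeed, no pointwise/$L^p$ inequality alone closes the argument. The resolution exploited above is that Lemma~\ref{A.1} forces $\mu(B_{2r}(0))\to0$, so for $|x|$ small this mass lies below the Jensen (equivalently Moser--Trudinger) threshold and the lost power of $r$ is recovered by shrinking the near-ball to radius $r^N$ with $N$ large but fixed. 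A smaller technical point is that $f$ and $g$ are allowed to blow up as $t\to0^+$; this is handled by a Harnack argument after localisation, which shows that on the relevant level sets $u$ (resp. $v$) is bounded below, so only the growth of $g$ (resp. $f$) at $\infty$ is relevant.
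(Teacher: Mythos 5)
Your argument is correct, and for the main assertion \eqref{eq5.4} it follows a genuinely different route from the paper's. The paper proves \eqref{eq5.4} by contradiction: it picks points $x_j$ where $v$ would be superlogarithmically large, rescales to unit scale (producing $f_j,g_j$ with total mass tending to $0$), applies the Brezis--Merle/Jensen trick to get a uniform $L^{p_j}$ bound for $g_j$ on the set where the logarithmic potential of $f_j$ exceeds $M_j=C\log\frac1{|x_j|}$, and then rules out the remaining possibility by a mass-rearrangement argument (concentrating the truncated density $\hat g_j\le e^{2M_j}$ near the origin and computing explicitly). You instead give a direct, quantitative estimate of the logarithmic potential of $w=-\Delta v$: split at radius $\delta=|x|^N$, control the far part by $\|w\|_{L^1}\cdot N\log\frac1{|x|}$, and control the near part by Cauchy--Schwarz together with the same Jensen exponential-integrability estimate $\int_{B_{2r}}e^{2Q\tilde w}\le Cr^{2-2Q\mu(B_{2r})}$, the point being that $\mu(B_{2r})\to0$ (absolute continuity of $-\Delta u\in L^1$, Lemma~\ref{A.1}) while the polynomially large factor $r^{-P-Qm}$ coming from exponentiating the Brezis--Lions bound for $u$ is beaten by choosing the fixed integer $N$ large. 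Both proofs rest on the same two inputs (Brezis--Lions representation plus the small-mass Jensen trick), but yours avoids the contradiction/rescaling/rearrangement machinery and yields the logarithmic bound for $v$ directly; the paper's version avoids tracking explicit exponents. Your Step 3 coincides with the paper's: the bound $v\le A\log\frac1{|x|}$, monotonicity of $h$, and Lemma~\ref{lem4.3} with $H(t)=Ch(At)+C'$ give \eqref{eq5.6}. Two minor points: your constant $P$ should be $\kappa C_u$ rather than $\kappa C_u/(2\pi)$ (immaterial, since $N$ is chosen after $P,Q,m$); and the lower bound for $u$ (and $v$) near the singularity, needed to dispose of the behaviour of $g$ (and $f$) at $0^+$, is most cleanly obtained from the minimum principle for positive superharmonic functions on annuli (letting the inner radius shrink) rather than a Harnack inequality --- this is exactly the fact the paper invokes in the first paragraph of its proof.
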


For simplicity and to motivate Theorem \ref{thm2.5}, we stated Theorem
\ref{thm2.3} for the special case $h(t)=t^\lambda$ rather than for
more general $h$ as in Theorem \ref{thm5.1}. Also, the bound
\eqref{eq2.9} in Theorem \ref{thm2.3} is optimal by Theorem
\ref{thm2.4}, whereas in general we can only show the bound
\eqref{eq5.6} in Theorem \ref{thm5.1} is essentially optimal (see
Theorem \ref{thm6.2}).

\begin{proof}[Proof of Theorem \ref{thm5.1}]
Since $u$ is positive and superharmonic in a punctured neighborhood
of the origin, there exists a constant $\varepsilon \in(0,1/4)$ such
that $u>\varepsilon$ in $B_{2\varepsilon}(0)\backslash \{0\}$.
Choose a positive constant $K$ such that $g(t)\leq e^{Kt}$ for
$t>\varepsilon$.  Then $v$ is a $C^2$ positive solution of
\begin{equation}\label{eq5.7}
0\leq -\Delta v\leq e^{Ku} \quad\text{ in }B_{2\varepsilon} (0)\backslash \{0\}.
\end{equation}

   Since $u$ and $v$ are positive and superharmonic in $B_{2\varepsilon}(0)\backslash \{0\}$, we have by Lemma \ref{A.1} that
   \begin{equation}\label{eq5.8}
    -\Delta u, \, -\Delta v\in L^1 (B_\varepsilon (0))
   \end{equation}
   and
    \begin{equation}\label{eq5.9}
     \begin{split} 
    u(x)=m_1 \log \frac{1}{|x|}+\frac{1}{2\pi} \int_{|y|<\varepsilon} \left( \log\frac{1}{|x-y|}\right) (-\Delta u(y))\,dy+h_1 (x)\\
    v(x)=m_2 \log \frac{1}{|x|}+\frac{1}{2\pi} \int_{|y|<\varepsilon} \left( \log\frac{1}{|x-y|}\right) (-\Delta v(y))\,dy+h_2 (x)
     \end{split}
    \quad\text{for } 0<|x|<\varepsilon
    \end{equation}
    where $m_1$, $m_2\ge 0$ are constants and $h_1$,$h_2
    :B_\varepsilon (0)\to\mathbb{R}$ are harmonic functions.
   
   Suppose for contradiction there exists a sequence $\{x_j \}^{\infty}_{j=1} \subset B_{\frac{\varepsilon}{2}} (0)\backslash \{0\}$ such that $x_j\to0$ as $j\to\infty$ and
   \begin{equation}\label{eq5.11}
    \frac{v(x_j)}{\log\frac{1}{|x_j|}} \to\infty \quad\text{ as }j\to\infty.
   \end{equation}
   Since, for $|x-x_j |<\frac{|x_j|}{4}$,
   \begin{equation*}
    \int_{|y-x_j |>\frac{|x_j|}{2}, \, |y|<\varepsilon} \left(\log\frac{1}{|x-y|}\right) (-\Delta u(y))\,dy\leq \left(\log\frac{4}{|x_j|}\right) \int_{|y|<\varepsilon} -\Delta u(y)\,dy,
   \end{equation*}
   and similarly for $v$, it follows from \eqref{eq5.8} and
   \eqref{eq5.9} that
    \begin{equation}\label{eq5.12}
     \begin{split} 
    u(x)\leq C\log\frac{1}{|x_j|}+\frac{1}{2\pi} \int_{|y-x_j|<\frac{|x_j|}{2}} \left(\log\frac{1}{|x-y|}\right) (-\Delta u(y))\,dy\\
    v(x)\leq C\log\frac{1}{|x_j|}+\frac{1}{2\pi} \int_{|y-x_j|<\frac{|x_j|}{2}} \left(\log\frac{1}{|x-y|}\right) (-\Delta v(y))\,dy
     \end{split}
    \quad \text{for } |x-x_j|<\frac{|x_j|}{4}
    \end{equation}   
    where $C$ does not depend on $j$ or $x$.
    
    Substituting $x=x_j$ in \eqref{eq5.12} and using \eqref{eq5.11} we get
    \begin{equation}\label{eq5.14}
     \frac{1}{\log\frac{1}{|x_j|}} \int_{|y-x_j|<\frac{|x_j|}{2}} 
\left(\log\frac{1}{|x_j-y|}\right) (-\Delta v(y))\,dy\to\infty \quad\text{ as }j\to\infty.
    \end{equation}
    Also, \eqref{eq5.8} implies
    \begin{equation}\label{eq5.15}
     \int_{|y-x_j|<\frac{|x_j|}{2}} -\Delta u(y)\,dy\to0 \quad\text{and}
     \quad \int_{|y-x_j|<\frac{|x_j|}{2}} -\Delta v(y)\,dy\to0 \quad\text{as }j\to\infty.
    \end{equation}
    Let $r_j =\frac{|x_j|}{4}$ and define $f_j ,g_j :\overline{B_2
      (0)}\to[0,\infty)$ by 
\[
f_j(\zeta)=-r^{2}_{j} \Delta u(x_j +r_j \zeta) \quad \text{and}\quad
    g_j (\zeta)=-r^{2}_{j} \Delta v(x_j +r_j \zeta).  
\]
Making the change of variables $y=x_j +r_j \zeta$ in \eqref{eq5.15},
\eqref{eq5.14}, and \eqref{eq5.12} and using \eqref{eq5.7} we get
    \begin{equation}\label{eq5.16}
     \int_{|\zeta|<2} f_j (\zeta)\,d\zeta\to0 \quad\text{ and }\quad\int_{|\zeta|<2} g_j (\zeta)\,d\zeta\to0 \quad\text{ as }j\to\infty
    \end{equation}
    \begin{equation}\label{eq5.17}
     \frac{1}{M_j} \int_{|\zeta|<2} \left(\log\frac{4}{|\zeta|}\right) g_j (\zeta)\,d\zeta\to\infty \quad\text{ as }j\to\infty
    \end{equation}
    and
    \begin{align}\label{eq5.18}
     \notag g_j (\xi)&\leq -\Delta v(x_j +r_j \xi)\leq \exp(Ku(x_j +r_j \xi))\\
     &\leq \exp\left(M_j +\frac{K}{2\pi} \int_{|\zeta|<2} \left(\log\frac{4}{|\xi-\zeta|}\right) f_j (\zeta)\,d\zeta\right) \quad\text{ for }|\xi|<1
    \end{align}
    where $M_j =C\log\frac{1}{|x_j|}$ and $C$ does not depend on $j$ or $\xi$.
    
    Let $\Omega_j =\{ \xi\in B_1 (0):u_j (\xi)>M_j\}$ where
    \begin{equation*}
     u_j (\xi):=\frac{K}{2\pi} \int_{|\zeta|<2} \left(\log\frac{4}{|\xi-\zeta|}\right) f_j (\zeta)\,d\zeta.
    \end{equation*}
    Then letting $p_j =\pi/(K\int_{|\zeta|<2} f_j (\zeta)\,d\zeta$), it
    follows from \eqref{eq5.18} that
    \begin{align*}
     \int_{\Omega_j} g_j (\xi)^{p_j} d\xi &\leq \int_{|\xi|<2} e^{2p_j u_j (\xi)} d\xi\\
     &\leq \int_{|\xi|<2} \left(\int_{|\zeta|<2} \frac{4}{|\xi-\zeta|} \frac{f_j (\zeta)}{\int_{|\zeta|<2} f_j} \,d\zeta \right) d\xi, \text{ by Jensen's inequality,}\\
     &\leq 16\pi, \text{ by interchanging the order of integration.}
    \end{align*}
    (The idea of using Jensen's inequality as above is due to Brezis
    and Merle \cite{BM}.) Thus by \eqref{eq5.16} and H\"older's inequality
    \begin{equation*}
     \limsup_{j\to\infty} \int_{\Omega_j} \left(\log\frac{4}{|\zeta|}\right) g_j (\zeta)\,d\zeta<\infty.
    \end{equation*}
    Hence, defining $\hat{g}_j :B_1 (0)\to[0,\infty)$ by
    \begin{equation*}
     \hat{g}_j (\xi)=
     \begin{cases}
      g_j (\xi), &\text{ for }\xi\in B_1 (0)\backslash\Omega_j\\
      0, &\text{ for }\xi\in\Omega_j
     \end{cases}
    \end{equation*}
    it follows from \eqref{eq5.16} and \eqref{eq5.17} that
    \begin{equation}\label{eq5.19}
     \frac{1}{M_j} \int_{|\zeta|<1} \left(\log\frac{4}{|\zeta|}\right) \hat{g}_j (\zeta)\,d\zeta\to\infty \quad\text{ as }j\to\infty.
    \end{equation}
    By \eqref{eq5.16} and \eqref{eq5.18} we have
    \begin{equation}\label{eq5.20}
     \int_{|\zeta|<1} \hat{g}_j (\zeta)\,d\zeta\to0 \quad\text{ as }j\to\infty
    \end{equation}
    and
    \begin{equation}\label{eq5.21}
     \hat{g}_j (\xi)\leq e^{2M_j} \quad\text{ in }B_1 (0).
    \end{equation}

    For fixed $j$, think of $\hat{g}_j (\zeta)$ as the density of a
    distribution of mass in $B_1 (0)$ satisfying \eqref{eq5.19},
    \eqref{eq5.20}, and \eqref{eq5.21}.  By moving small pieces of
    this mass nearer to the origin in such a way that the new density
    (which we again denote by $\hat{g}_j (\zeta)$) does not violate
    \eqref{eq5.21}, we will not change the total mass
    $\int_{|\zeta|<1}\hat{g}_j (\zeta)\,d\zeta$ but $\int_{|\zeta|<1}
    (\log(4/|\zeta|))\hat{g}_j (\zeta)\,d\zeta$ will increase.  Thus
    for some $\rho_j\in(0,1)$ the functions
    \begin{equation*}
     \hat{g}_j (\zeta)=
     \begin{cases}
      e^{2M_j} , &\text{ for }|\zeta|<\rho_j\\
      0, &\text{ for }\rho_j <|\zeta|<1
     \end{cases}
    \end{equation*}
    satisfy \eqref{eq5.19}, \eqref{eq5.20}, and \eqref{eq5.21} which,
    as elementary and explicit calculations show, is impossible
    because $M_j \to\infty$ as $j\to\infty$.  This contradiction
    proves \eqref{eq5.4}.
 
Since $v(x)$ is positive and superharmonic, $v$ is bounded below in
some punctured neighborhood of the origin by some constant $\delta\in
(0,1)$. Hence by \eqref{eq5.4} we have
\[
\delta\le v(x)\le A\log\frac{1}{|x|} \quad \text{for $|x|$ small and
  positive.}
\]
Also by \eqref{eq5.55} there exists a positive constant $C$ such that
\[
\log^+f(t)\le Ch(t) \quad \text{for $t\ge \delta$.}
\]
Hence for $|x|$ small and positive we have by \eqref{eq5.5} that
\begin{align*}
\log^+(-\Delta u(x))&\le \log^+f(v(x))\le Ch(v(x))\\
                    &\le Ch\left(A\log\frac{1}{|x|}\right)=CH\left(\log\frac{1}{|x|}\right)
\end{align*}
where $H(t)=h(At)$. Thus \eqref{eq5.6} follows from Lemma \ref{lem4.3}.
\end{proof}

\begin{proof}[Proof of Theorem \ref{thm2.2}]
 Define $F,M:(0,\infty)\to(0,\infty)$ by
 \begin{equation*}
  F(t)=\min \{f(t),g(t)\} \quad\text{and}\quad M(t)=\min_{\tau\geq t} \frac{\log F(\tau)}{\tau}.
 \end{equation*}
Then $M$ is nondecreasing. By \eqref{eq2.5}, $M(t)\to\infty$ as $t\to\infty$
and there exists $K>0$ such that $F(t)>1$ for $t\ge K$. Thus
 \begin{equation}\label{eq5.22}
  tM(t)\leq \min_{\tau\geq t} \log F(\tau) \quad\text{ for }t\ge K.
 \end{equation}
 Define $\varphi:(0,1)\to(0,1)$ by $\varphi(r)=r$ and let
 $\{x_j\}^{\infty}_{j=1} , \, \{r_j\}^{\infty}_{j=1}$, and $A$ be as
 in Lemma \ref{lem4.1}.  By holding $x_j$ fixed and decreasing $r_j$
 we can assume
\begin{equation}\label{eq5.22.1}
A\varphi(|x_j|)\log\frac{1}{r_j}\ge K,
\end{equation}
\begin{equation}\label{eq5.23}
  A\varphi(|x_j|)M\left(A\varphi(|x_j|)\log \frac{1}{r_j} \right)>2
 \end{equation}
 and
 \begin{equation}\label{eq5.24}
  (h(|x_j|))^2 <A\varphi(|x_j|)\log\frac{1}{r_j}.
 \end{equation}
 Let $\Omega=B_2 (0)$.  By Lemma \ref{lem4.1} there exists a positive function
 $u\in C^\infty (\Omega\backslash \{0\})$ which satisfies
 \eqref{eq4.4}--\eqref{eq4.7}.  By \eqref{eq4.6} and \eqref{eq5.24} we have
 \begin{equation*}
  u(x_j)\neq O(h(|x_j|)) \quad\text{ as }j\to\infty
 \end{equation*}
 which implies \eqref{eq2.6}.  Also for $x\in B_{r_j} (x_j)$ and
 $-\Delta u(x)>0$ it follows from \eqref{eq4.6}, \eqref{eq5.22.1},
 \eqref{eq5.22}, \eqref{eq5.23} and \eqref{eq4.4} that
 \begin{align*}
  \log F(u(x))&\geq \left( A\varphi(|x_j|)\log\frac{1}{r_j}\right) M\left(A\varphi(|x_j|)\log\frac{1}{r_j}\right)\\
  &>2\log\frac{1}{r_j}\\
  &\geq \log(-\Delta u(x)).
 \end{align*}
 Thus $u$ satisfies 
 \begin{equation}\label{eq5.25}
  0\leq -\Delta u\leq F(u)
 \end{equation}
 in $B_{r_j} (x_j)$.  By \eqref{eq4.5}, $u$ satisfies \eqref{eq5.25}
 in $\Omega\backslash(\{0\}\cup \bigcup^{\infty}_{j=1} B_{r_j}
 (x_j))$.  Thus $u$ satisfies \eqref{eq5.25} in $\Omega\backslash
 \{0\}$.  Taking $v=u$ completes to proof of Theorem \ref{thm2.2}.
\end{proof}

The following theorem with $h(t)=t^\lambda$, $\lambda>1$, immediately implies
Theorem \ref{thm2.4}.

\begin{thm}\label{thm6.2}
  Suppose $h:(0,\infty)\to(0,\infty)$ and $\psi:(0,1)\to(0,1)$ are 
  continuous nondecreasing functions satisfying
\begin{equation}\label{equ6.9}
\lim_{t\to\infty}\frac{h(t)}{t}=\infty \quad\text{and}\quad
\lim_{r\to 0^{+}} \psi (r)=0.  
\end{equation}
Then there exist $C^\infty$ positive solutions $u(x)$ and $v(x)$ of
the system
 \begin{equation}\label{equ6.10}
 \begin{aligned}
 0 & \le -\Delta u\le e^{h(v)}\\
 0 & \le -\Delta v\le e^u
 \end{aligned}
\qquad\text{in }B_2 (0)\backslash \{0\} \subset \mathbb{R}^2
 \end{equation}
 such that
 \begin{equation}\label{equ6.11}
  u(x)\neq O\left( \psi(|x|)h\left(\log\frac{2}{|x|}\right)\right)
\quad \text{as }x\to 0
 \end{equation}
 and
 \begin{equation}\label{equ6.12}
  \frac{v(x)}{\log\frac{1}{|x|}} \to 1 \quad \text{as }x\to0.
 \end{equation}
\end{thm}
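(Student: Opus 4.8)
The plan is to construct $u$ and $v$ simultaneously on $B_2(0)\setminus\{0\}$, harmonic except on a sequence of tiny (half-)balls clustering at $0$, arranged so that $u$ has an enormous spike at each point of a sequence $x_j\to0$ while $v$ stays essentially equal to $\log\frac2{|x|}$. The easy half is $v$: take $v=\log\frac2{|x|}$ (or, in the variant (b) below, this plus negligible bumps), which is positive and harmonic in $B_2(0)\setminus\{0\}$ and satisfies $v(x)/\log\frac1{|x|}=1+\log2/\log\frac1{|x|}\to1$, so \eqref{equ6.12} holds; since $-\Delta v\equiv0$ off the negligible bumps and $e^u\ge1$ everywhere — indeed $e^u$ is enormous near each $x_j$ — the inequality $0\le-\Delta v\le e^u$ is immediate. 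Thus only $u$ requires work.

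For $u$: fix a continuous $\varphi:(0,1)\to(0,1)$ with $\varphi(r)\to0$, a sequence $x_j\to0$ with $4|x_{j+1}|<|x_j|<\tfrac12$, and radii $r_j\le|x_j|/2$, all to be specified. I apply the variant of Lemma \ref{lem4.1} ($n=2$, $\Omega=B_2(0)$) in which the mass defining $-\Delta u$ near $x_j$ is placed in the half-ball $E_j:=B_{r_j}(x_j)\cap B_{|x_j|}(0)$ rather than in $B_{r_j}(x_j)$; this produces $u\in C^\infty(B_2(0)\setminus\{0\})$ with $u\ge1$, $-\Delta u=0$ off $\bigcup_jE_j$, $0\le-\Delta u\le C\varphi(|x_j|)/r_j^2$ on $E_j$, and $u\ge A\varphi(|x_j|)\log\frac1{r_j}$ on $E_j$ (in particular at $x_j$). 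Since $E_j\subset B_{|x_j|}(0)$ we have $v=\log\frac2{|x|}\ge\log\frac2{|x_j|}$ on $E_j$, so (as $h$ is nondecreasing) the constraint $-\Delta u\le e^{h(v)}$ on $E_j$ is satisfied as soon as $C\varphi(|x_j|)/r_j^2\le e^{h(\log\frac2{|x_j|})}$. Choosing $r_j$ to saturate this — note $r_j$ is then much smaller than $|x_j|$, since $h(t)/t\to\infty$, so indeed $r_j<|x_j|/2$ for large $j$ — gives $\log\frac1{r_j}\ge\frac12 h(\log\frac2{|x_j|})$ and hence, for $j$ large,
\[
u(x_j)\ \ge\ A\,\varphi(|x_j|)\log\frac1{r_j}\ \ge\ \frac A2\,\varphi(|x_j|)\,h\!\left(\log\frac2{|x_j|}\right).
\]

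Now choose the parameters: take $\varphi(r):=\sqrt{\psi(r)}$ (continuous, into $(0,1)$, tending to $0$) and choose the $x_j$ decreasing so rapidly that $\sum_j\sqrt{\psi(|x_j|)}<\infty$, possible because $\psi(r)\to0$. Then $\varphi(|x_j|)/\psi(|x_j|)=1/\sqrt{\psi(|x_j|)}\to\infty$, so
\[
\frac{u(x_j)}{\psi(|x_j|)\,h(\log\frac2{|x_j|})}\ \ge\ \frac A2\cdot\frac1{\sqrt{\psi(|x_j|)}}\ \longrightarrow\ \infty ,
\]
which is \eqref{equ6.11}. It then remains only to check that $u,v\in C^\infty(B_2(0)\setminus\{0\})$ are positive and solve \eqref{equ6.10}: positivity and smoothness follow from mollifying the bump densities and from $u\ge1$, $v>0$; the system holds by the two displayed estimates; disjointness of the $E_j$ and summability of the masses (so the potential defining $u$ converges off the origin) are guaranteed by $4|x_{j+1}|<|x_j|$ and $\sum\varphi(|x_j|)<\infty$.

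Main obstacle. The real difficulty is the coupling of the two inequalities: licensing a huge $-\Delta u$ near $x_j$ requires $v$ to be large there, yet \eqref{equ6.12} forces $v\sim\log\frac1{|x|}$ globally. For a rapidly growing $h$ one cannot afford even an $o(1)$ loss in the argument of $h$ — the ratio $h(\log\frac2{|x_j|}-o(1))/h(\log\frac2{|x_j|})$ need not be bounded below — which is why the $u$-mass must be positioned just inside the sphere $|x|=|x_j|$, on the half-ball $E_j$, where $v=\log\frac2{|x|}\ge\log\frac2{|x_j|}$ exactly. An equivalent alternative (variant (b)) keeps $u$ on full balls $B_{r_j}(x_j)$ but nudges $v$ up near $x_j$ by a superharmonic bump of height of order $r_j/|x_j|$, whose Laplacian ($\sim(r_j|x_j|)^{-1}$) is easily dominated by $e^u\ge r_j^{-A\varphi(|x_j|)}$ there since $\log\frac1{r_j}$ dwarfs $\log\frac1{|x_j|}$; this again leaves $v(x)/\log\frac1{|x|}\to1$ untouched. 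Everything past this point is routine bookkeeping.
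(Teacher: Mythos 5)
Your construction is essentially the one in the paper: the same Lemma \ref{lem4.1} machinery with $\varphi=\sqrt{\psi}$, mass of size $\varphi(|x_j|)/r_j^2$ placed near a rapidly decreasing sequence $x_j\to 0$ with $\sum\varphi(|x_j|)<\infty$, the radius chosen so that $\log\frac{1}{r_j}$ is about $\frac12 h\left(\log\frac{2}{|x_j|}\right)$ (and $r_j\le|x_j|/2$ for large $j$ because $h(t)/t\to\infty$, exactly as in the paper, which passes to a subsequence), and the final estimate $u(x_j)\big/\left(\psi(|x_j|)h\left(\log\frac{2}{|x_j|}\right)\right)\ge \frac{A}{2\sqrt{\psi(|x_j|)}}\to\infty$. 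The only point where you diverge is how the constraint $-\Delta u\le e^{h(v)}$ is checked on the support of $-\Delta u$: you keep $v=\log\frac{2}{|x|}$ and move the mass into the inner half-ball $E_j=B_{r_j}(x_j)\cap B_{|x_j|}(0)$, which requires a stated-but-unproven variant of Lemma \ref{lem4.1}; the paper instead simply takes $v(x)=\log\frac{4}{|x|}$, so that on the full ball $B_{r_j}(x_j)$ one has $|x|<2|x_j|$, hence $v(x)>\log\frac{2}{|x_j|}$ and monotonicity of $h$ gives $e^{h(v(x))}\ge e^{h(\log\frac{2}{|x_j|})}=r_j^{-2}\ge-\Delta u$, with the lemma used verbatim — a choice that \eqref{equ6.12} permits because the constant inside the logarithm is irrelevant. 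Your half-ball variant is easily justified (shift the bump to radius $r_j/2$ about $x_j-\frac{r_j}{2}\frac{x_j}{|x_j|}$ and repeat the lemma's computation), so this is not a real gap; but your claim in the ``main obstacle'' discussion that the mass \emph{must} be positioned just inside the sphere $|x|=|x_j|$ is not accurate, and your alternative variant (b) is quantitatively wrong as written: a bump for $v$ of height $r_j/|x_j|$ varying at scale $r_j$ has Laplacian of order $(r_j|x_j|)^{-1}=e^{\log\frac{1}{r_j}+\log\frac{1}{|x_j|}}$, which greatly exceeds the available lower bound $e^{u}\ge e^{A\varphi(|x_j|)\log\frac{1}{r_j}}$ since $A\varphi(|x_j|)\to 0$; it could only be salvaged by spreading the $v$-bump over scale comparable to $|x_j|$ rather than $r_j$.
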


\begin{proof}
  Let $v(x)=\log\frac{4}{|x|}$.  Then $v$ satisfies
  \eqref{equ6.10}$_2$ and \eqref{equ6.12}.  Define
  $\varphi:(0,1)\to(0,1)$ by $\varphi=\sqrt{\psi}$.  Let
  $\{x_j\}^{\infty}_{j=1} \subset\mathbb{R}^2$ be as in Lemma
  \ref{lem4.1} and $r_j
  =e^{-\frac{1}{2}h\left(\log\frac{2}{|x_j|}\right)}$.  By taking a
  subsequence if necessary, it follows from \eqref{equ6.9}$_1$ that
  $r_j$ satisfies \eqref{eq4.3}.
 
    Therefore, by Lemma \ref{lem4.1}, there exists a positive function
    $u\in C^\infty (B_2 (0)\backslash \{0\})$ and a positive constant
    $A$ such that $u$ satisfies \eqref{eq4.4}--\eqref{eq4.7}.  Thus
    $u$ satisfies \eqref{equ6.10}$_1$ in $B_2
    (0)\backslash(\{0\}\cup\cup^{\infty}_{j=1} B_{r_j} (x_j))$.  Also
    for $x\in B_{r_j} (x_j)$ we have
 \begin{equation*}
  0\leq -\Delta u(x)\leq \frac{\varphi(|x_j|)}{r^{2}_{j}} \leq \frac{1}{r^{2}_{j}}=e^{h\left(\log\frac{4}{2|x_j|}\right)} <e^{h\left(\log\frac{4}{|x|}\right)} =e^{h(v(x))} .
 \end{equation*}
 Hence $u$ satisfies \eqref{equ6.10}$_1$ in $B_2 (0)\backslash \{0\}$.
 
 Finally
 \begin{align*}
  \frac{u(x_j)}{\psi(|x_j|)h\left(\log\frac{2}{|x_j|}\right)} &\geq \frac{A\varphi(|x_j|)\log\frac{1}{r_j}}{\psi(|x_j|)h\left(\log\frac{2}{|x_j|}\right)}\\
  &=\frac{A/2}{\sqrt{\psi(|x_j|)}}\to\infty \quad\text{ as }j\to\infty
 \end{align*}
 which proves \eqref{equ6.11}.
\end{proof}

\begin{proof}[Proof of Theorem \ref{thm2.5}]
Define functions $u$ and $v$ by 
 \begin{equation*}
  u(x)=U(x)+a\log\frac{1}{|x|}, \qquad v(x)=V(x)+a\log\frac{1}{|x|}.
 \end{equation*}
 Then $u$ and $v$ are $C^2$ positive solutions of
 \begin{align*}
  &0\leq-\Delta u\\
  &0\leq-\Delta v\le e^u
 \end{align*}
 in a punctured neighborhood of the origin.  Thus \eqref{eq2.17}
 follows from Theorem \ref{thm2.3}.  Hence by \eqref{eq2.14}
 \begin{align*}
  \log^{+}(-\Delta u(x))&=\log^{+}(-\Delta U(x))\leq \log^{+}(|x|^{-a} e^{|V|^\lambda} )\\
  &=a\log\frac{1}{|x|}+|V|^\lambda \leq a\log\frac{1}{|x|}+C\left(\log\frac{1}{|x|}\right)^\lambda .
 \end{align*}
 Thus \eqref{eq2.16} follows from Lemma \ref{lem4.3}.
\end{proof}

\section{Proofs of three and higher dimensional results}\label{sec6}

In this section we prove Theorems \ref{thm3.1}--\ref{thm3.7}.

\begin{proof}[Proof of Theorem \ref{thm3.1}]
  Since increasing $\sigma$ and/or $\lambda$ weakens the conditions
  (\ref{eq3.1}, \ref{eq3.2}), we can assume $\sigma=\lambda=\frac{n}{n-2}$.
 
  As in the first paragraph of the proof of Theorem \ref{thm5.1},
  there exist positive constants $K$ and $\varepsilon$ such that $u$
  and $v$ are positive solutions of the system
 \begin{equation*}
 \begin{matrix} 
 0\leq-\Delta u\leq Kv^{\frac{n}{n-2}}\\
 0\leq-\Delta v\leq Ku^{\frac{n}{n-2}}
 \end{matrix}
 \quad\text{in } B_\varepsilon (0)\backslash \{0\}.
 \end{equation*}
 Let $w=u+v$.  Then in $B_\varepsilon (0)\backslash \{0\}$ we have
 \begin{align*}
  0&\leq-\Delta w=-\Delta u-\Delta v\leq K\left(u^{\frac{n}{n-2}} +v^{\frac{n}{n-2}}\right)\\
  &\leq Kw^{\frac{n}{n-2}} .
 \end{align*}
 Thus by \cite[Theorem 2.1]{T2001}
 \begin{equation*}
  u(x)+v(x)=w(x)=O\left(|x|^{-(n-2)}\right) \quad\text{ as }x\to0
 \end{equation*}
 which proves \eqref{eq3.6} and \eqref{eq3.7}.
\end{proof}

\begin{proof}[Proof of Theorem \ref{thm3.5}]
As in the first paragraph of the proof of Theorem \ref{thm5.1}, we can
assume the function $g$ is given by $g(t)=t^\sigma$ and then Theorem
\ref{thm3.5} follows immediately from Lemma \ref{lem4.5}(i) with
$\beta=0$. 
\end{proof}

\begin{proof}[Proof of Theorem \ref{thm3.7}]
We prove Theorem \ref{thm3.7} one case at a time.
 
\begin{description}
  \item[Case A.] Suppose $\sigma=0$.  Then by \eqref{eq3.23} and 
Lemma \ref{lem4.4}(i) applied to $v$ we have
  \begin{equation}\label{eq6.1}
   v(x)=O\left( \left(\frac{1}{|x|}\right)^{n-2}\right) +o\left( \left(\frac{1}{|x|}\right)^{\frac{n-2}{n}\beta} \right).
  \end{equation}
  It follows therefore from \eqref{eq3.22} that
  \begin{equation*}
   -\Delta u(x)=O\left( \left(\frac{1}{|x|}\right)^{(n-2)\lambda+\alpha} +\left(\frac{1}{|x|}\right)^{\frac{n-2}{n}\beta\lambda+\alpha}\right)
  \end{equation*}
  and hence by Lemma \ref{lem4.4}(i)
  \begin{equation}\label{eq6.2}
   u(x)=O\left( \left(\frac{1}{|x|}\right)^{n-2}\right) +o\left( \left(\frac{1}{|x|}\right)^{\frac{n-2}{n}((n-2)\lambda+\alpha)} +\left(\frac{1}{|x|}\right)^{\frac{n-2}{n}\left(\frac{n-2}{n}\beta\lambda+\alpha\right)} \right).
  \end{equation}
  Case A of Theorem \ref{thm3.7} follows immediately from
  \eqref{eq6.1} and \eqref{eq6.2}.
\end{description}  
The reasoning used to prove Cases B, C, and D of Theorem \ref{thm3.7}
is as follows.  Either $u$ satisfies
  \begin{equation}\label{eq6.3}
   -\Delta u(x)=O\left( \left(\frac{1}{|x|}\right)^n \right) 
\quad\text{ as }x\to0
  \end{equation}
  or it doesn't. 

\begin{description}
\item[Step I.] If $u$ satisfies \eqref{eq6.3} then we prove below that
  $u$ and $v$ satisfy \eqref{eq3.25} and \eqref{eq3.26}.
\item[Step II.]  If $u$ does not satisfy \eqref{eq6.3} then, for
  example, to prove Theorem \ref{thm3.7} in Case B, we prove below that the
  condition $\delta\le n$ in (B1) does not hold and $u$ and $v$
  satisfy \eqref{eq3.27} and \eqref{eq3.28}.
\end{description}
These two steps complete the proof of Case B as follows: If the
condition $\delta\le n$ in (B1) holds then by Step II, $u$ satisfies
\eqref{eq6.3} and hence by Step I, $u$ and $v$ satisfy (\ref{eq3.25},
\ref{eq3.26}). On the other hand, if the condition $\delta>n$ in (B2)
holds then by Steps I and II, $u$ and $v$ satisfy either
(\ref{eq3.25}, \ref{eq3.26}) or (\ref{eq3.27}, \ref{eq3.28}). But
since (\ref{eq3.25}, \ref{eq3.26}) implies (\ref{eq3.27},
\ref{eq3.28}), we have $u$ and $v$ satisfy (\ref{eq3.27},
\ref{eq3.28}).

Similar reasoning will be used in Cases C and D.

  \begin{description}
\item[Step I.] Suppose $u$ satisfies \eqref{eq6.3}.
    Then by Lemma \ref{lem4.4}(i) with $\gamma=n$ we see that $u$ satisfies
    \eqref{eq3.25} as $x\to0$.  Hence by \eqref{eq3.23},
  \begin{equation*}
   0\leq-\Delta v=O\left(
     \left(\frac{1}{|x|}\right)^{(n-2)\sigma+\beta}\right) 
\quad\text{ as }x\to0.
  \end{equation*}
  Thus by Lemma \ref{lem4.4}(i) applied to $v$ we have as $x\to0$ that
  \begin{equation*}
   v(x)=O\left( \left(\frac{1}{|x|}\right)^{n-2}\right) +o\left( \left(\frac{1}{|x|}\right)^{\frac{n-2}{n}((n-2)\sigma+\beta)} \right)
  \end{equation*}
  which implies $v$ satisfies \eqref{eq3.26} as $x\to0$. This
  completes the proof of Step I.
  \item[Step II.] Suppose
  \begin{equation}\label{eq6.4}
   -\Delta u(x)\neq O \left( \left(\frac{1}{|x|}\right)^n \right) 
\quad\text{ as }x\to0.
  \end{equation}
  By Lemma \ref{lem4.5}(ii)
  \begin{equation}\label{eq6.5}
   -\Delta u(x)=O\left( \left(\frac{1}{|x|}\right)^{\gamma_1} \right) 
\quad\text{ as }x\to0
  \end{equation}
  for some $\gamma_1 >n$.
 \end{description}
We now complete the proof Theorem \ref{thm3.7} by completing the proof
of Step II one case at a time.
\begin{description}
\item[Case B.]  Suppose $0<\sigma<\frac{2}{n-2}$.  Then by
  Lemma \ref{lem4.5}(i) we have $v(x)$ satisfies
  \eqref{eq3.28}.  Hence by \eqref{eq3.22}
 \begin{equation}\label{eq6.6}
  -\Delta u(x)=O\left( \left(\frac{1}{|x|}\right)^{(n-2)\lambda+\alpha} +\left(\frac{1}{|x|}\right)^{[(n-2)\sigma-2+\beta]\lambda+\alpha} \right).
 \end{equation}
 By \eqref{eq6.4} the maximum $\delta$ of the two exponents on
 $\frac{1}{|x|}$ in \eqref{eq6.6} is greater than $n$.  Thus by Lemma
 \ref{lem4.4}(i), $u$ satisfies \eqref{eq3.27}.  This completes the proof of
   Step II in Case B.
 \item[Case C.] Suppose $\sigma=\frac{2}{n-2}$.  Then by
   \eqref{eq6.5} and Lemma \ref{lem4.4}(iii) we have $v(x)$ satisfies
   \eqref{eq3.30}.  Hence by \eqref{eq3.22}
 \begin{equation*}
  -\Delta u(x)=
  \begin{cases}
   O\left( \left(\frac{1}{|x|}\right)^{(n-2)\lambda+\alpha>n} \right), &\text{ if }\beta<n-2\\
   o\left( \left(\frac{1}{|x|}\right)^{\beta\lambda+\alpha} \left(\log\frac{1}{|x|}\right)^\lambda \right), &\text{ if }\beta\geq n-2.
  \end{cases}
  \end{equation*}
  Thus by \eqref{eq6.4} neither (i) nor (ii) in the statement of Case C holds.
  Hence by Lemma \ref{lem4.4}(i),(ii), $u$ satisfies \eqref{eq3.29}.
  This completes the proof of Step II in Case C.
\item[Case D.] Suppose $\sigma>\frac{2}{n-2}$ and $a$ and
  $b$ are defined by \eqref{eq3.31}.  Then by \eqref{eq3.21}
  \begin{equation}\label{eq6.7}
   1>1-a=\frac{n-2}{n}\lambda \left[ \frac{n}{n-2} \frac{1}{\lambda} +\frac{2}{n-2}-\sigma\right]>0.
  \end{equation}
  By \eqref{eq6.5} and Lemma \ref{lem4.4}(iii) we have
  \begin{equation*}
   v(x)=O\left( \left(\frac{1}{|x|}\right)^{p_0} \right) \quad\text{ as }x\to0
  \end{equation*}
  for some $p_0 >\max \{n-2, \, \frac{b}{1-a}\}$.  Hence by \eqref{eq3.22}
  \begin{equation*}
   -\Delta u(x)=O\left( \left(\frac{1}{|x|}\right)^{\gamma_0 :=\alpha+p_0 \lambda}\right) \quad\text{ as }x\to0
  \end{equation*}
  and $\gamma_0 >n$ by \eqref{eq6.4}.  Thus by Lemma \ref{lem4.4}(iii)
  $v$ satisfies \eqref{eq4.19} with $\gamma=\gamma_0 =\alpha+p_0 \lambda>n$,
  that is
  \begin{equation}\label{eq6.8}
   v(x)=O \left( \left(\frac{1}{|x|}\right)^{n-2} \right) +o\left( \left(\frac{1}{|x|}\right)^{p_1} \right) \quad\text{ as }x\to0
  \end{equation}
  where
  \begin{align*}
   p_1 :&=\frac{\gamma_0}{n}[(n-2)\sigma-2]+\beta=\frac{\alpha+p_0 \lambda}{n} [(n-2)\sigma-2]+\beta\\
   &=p_0 a+b.
  \end{align*}
 By \eqref{eq6.7} the sequence defined by $p_{j+1} =ap_j +b$ decreases
 to $\frac{b}{1-a}$.  Thus after iterating a finite number of times
 the process of obtaining $p_1$ from $p_0$ and using \eqref{eq6.8} we obtain
 as $x\to0$ that $v$ satisfies \eqref{eq3.33} for all $\varepsilon>0$.  Hence
 by \eqref{eq3.22}
 \begin{equation}\label{eq6.9}
  -\Delta u(x)=
  \begin{cases}
   O\left( \left(\frac{1}{|x|}\right)^{(n-2)\lambda+\alpha}\right), &\text{ if }\frac{b}{1-a}<n-2\\
   O\left(
     \left(\frac{1}{|x|}\right)^{\frac{b\lambda}{1-a}+\alpha+\varepsilon}\right),
   &\text{ if }\frac{b}{1-a}\ge n-2   
  \end{cases}
 \end{equation}
 for all $\varepsilon>0$.  By \eqref{eq6.4} the exponents on
 $\frac{1}{|x|}$ in (6.9) are greater than $n$.  (That is neither (i)
 nor (ii) in the statement of Case D hold.)  Thus, by Lemma \ref{lem4.4}(i), $u$
 satisfies \eqref{eq3.32} for all $\varepsilon>0$.  This completes the proof
 of Step II in Case D.
\end{description}
\end{proof}

\begin{proof}[Proof of Theorem \ref{thm3.2}]
  Since increasing $\sigma$ weakens the condition \eqref{eq3.2} on $g$
  and since the bounds \eqref{eq3.9}, \eqref{eq3.10} do not depend on
  $\sigma$, we can assume without loss of generality that
 \begin{equation*}
 \lambda>\frac{n}{n-2} \quad\text{ and }\quad\frac{2}{n-2}<\sigma<\frac{2}{n-2}+\frac{n}{n-2}\frac{1}{\lambda}.
 \end{equation*}
 
 As in the first paragraph of the proof of Theorem \ref{thm5.1}, there exists a
 constant $K>0$ such that $u$ and $v$ are $C^2$ positive solutions of
 \begin{align*}
  &0\leq-\Delta u\leq Kv^\lambda\\
  &0\leq-\Delta v\leq Ku^\sigma\\
 \end{align*}
 in a punctured neighborhood of the origin in $\mathbb{R}^n$.  By
 scaling we can assume $K=1$.
 
 We now apply Theorem \ref{thm3.7}, Case D with $\alpha=\beta=0$.  Let
 $a$ and $b$ be defined by \eqref{eq3.31}.  Then $b=0, \,
 0=\frac{b}{1-a}<n-2$ and $(n-2)\lambda>n=n-\alpha$.  Thus neither (i)
 nor (ii) in Theorem \ref{thm3.7}, Case D, hold.  Hence Theorem
 \ref{thm3.2} follows from \eqref{eq3.32} and \eqref{eq3.33}.
\end{proof}

\begin{proof}[Proof of Theorem \ref{thm3.3}]
  Let $v(x)=|x|^{-(n-2)}$ then $v$ satisfies \eqref{eq3.11}$_2$ and
  \eqref{eq3.14}.  Define $\varphi:(0,1)\to(0,1)$ by
  $\varphi=\sqrt{\psi}$.  Let $\{x_j\}$ be as in Lemma \ref{lem4.1}
  and $r_j =(2|x_j|)^{\frac{n-2}{n}\lambda}$.  By taking a
  subsequence if necessary, $r_j$ satisfies \eqref{eq4.3}.
  Therefore by Lemma \ref{lem4.1} there exists a positive function
  $u\in C^\infty (\mathbb{R}^n \backslash \{0\})$ and a positive
  constant $A=A(n)$ such that $u$ satisfies
  \eqref{eq4.4}--\eqref{eq4.7}.  Thus $u$ satisfies \eqref{eq3.11}$_1$ in
  $\mathbb{R}^n \backslash(\{0\}\cup\cup^{\infty}_{j=1} B_{r_j}
  (x_j))$.  Also for $x\in B_{r_j} (x_j)$ we have
 \begin{equation*}
  0\leq-\Delta u(x)\leq \frac{\varphi(|x_j|)}{r^{n}_j}<\left(\frac{1}{2|x_j|}\right)^{(n-2)\lambda} <v(x)^\lambda.
 \end{equation*}
 Hence $u$ satisfies \eqref{eq3.11}$_1$ in $\mathbb{R}^n \backslash \{0\}$.

 Finally,
 \begin{align*}
  \frac{u(x_j)}{\psi(|x_j|)|x_j|^{-\frac{(n-2)^2}{n}\lambda}}&\geq \frac{A\varphi(|x_j|)}{r^{n-2}_{j} \psi(|x_j|)|x_j|^{-\frac{(n-2)^2}{n}\lambda}}\\
  &=\frac{A}{2^{\frac{(n-2)^2}{n}\lambda} \sqrt{\psi(|x_j|)}}\to\infty \quad\text{ as }j\to\infty
 \end{align*}
 which proves \eqref{eq3.13}.
\end{proof}

\begin{proof}[Proof of Theorem \ref{thm3.4}]
  It follows from \eqref{eq3.15} that $\lambda>\frac{n}{n-2}$.  Denote the
  problem \eqref{eq3.16} by $P(\lambda,\sigma)$.  If 
$\hat{\lambda}\geq\lambda$ and $\hat\sigma\geq\sigma$
  are constants and $(u,v)$ solves $P(\lambda,\sigma)$ then clearly
  $(u,v)$ solves $P(\hat{\lambda}, \hat{\sigma})$.  We can therefore
  assume
 \begin{equation}\label{eq6.10}
 \sigma<\frac{n}{n-2}.
 \end{equation}
 Since the first inequality in \eqref{eq3.15} holds
 \begin{align*}
  &\text{if and only if} \qquad (n-2)\sigma>2+\frac{n}{\lambda}=n-(n-2)+\frac{n}{\lambda}=n-\frac{(n-2)\lambda-n}{\lambda}\\
  &\text{if and only if} \qquad n-(n-2)\sigma<\frac{(n-2)\lambda-n}{\lambda}
 \end{align*}
 we see by \eqref{eq6.10} that
 \begin{equation*}
  \frac{1}{n-(n-2)\sigma}>\frac{\lambda}{(n-2)\lambda-n},
 \end{equation*}
 or, in other words,
 \begin{equation}\label{eq6.11}
  \beta>\alpha\lambda>0 \quad\text{ where }\quad
  \beta:=\frac{1}{n-(n-2)\sigma} \quad\text{ and }\quad \alpha:=\frac{1}{(n-2)\lambda-n}.
 \end{equation}
 Let $\varphi(r)=r$ and let $\{x_j\}^{\infty}_{j=1}, \,
 \{r_j\}^{\infty}_{j=1}$, and $A=A(n)$ be as in Lemma \ref{lem4.1}.
 Define $\psi_j >0$ as a function of $r_j$ by
 \begin{equation}\label{eq6.12}
  r_j =\left(\frac{(A\psi_j)^\lambda}{\varphi(|x_j|)}\right)^\alpha .
 \end{equation}
 Then
 \begin{equation}\label{eq6.13}
  \frac{A\psi_j}{r^{n-2}_{j}}=\frac{(A\psi_j)\varphi(|x_j|)^{\alpha(n-2)}}{(A\psi_j)^{\lambda\alpha(n-2)}}=\left(\frac{\varphi(|x_j|)^{n-2}}{(A\psi_j)^{\lambda(n-2)-\frac{1}{\alpha}=n}}\right)^\alpha .
 \end{equation}
 By decreasing $r_j$ (and thereby decreasing $\psi_j$) we can assume
 \begin{equation}\label{eq6.14}
  \frac{A\varphi(|x_j|)}{r^{n-2}_{j}} >h(|x_j|)^2, \qquad \sum^{\infty}_{j=1} \psi_j <\infty,
 \end{equation}
 \begin{equation}\label{eq6.15}
  \psi^{\alpha\lambda-\beta}_{j} \geq
  \frac{\varphi(|x_j|)^{\alpha-\sigma\beta}}{A^{\sigma\beta+\alpha\lambda}}
  \qquad\text{ and }\qquad \frac{A\psi_j}{r^{n-2}_{j}} >h(|x_j|)^2
 \end{equation}
 by \eqref{eq6.13}. It follows from \eqref{eq6.12} and \eqref{eq6.15}
 that
 \begin{equation*}
  \left( \frac{(A\psi_j)^\lambda}{\varphi(|x_j|)}\right)^\alpha =r_j \geq \left(\frac{\psi_j}{(A\varphi(|x_j|))^\sigma}\right)^\beta
 \end{equation*}
 which implies
 \begin{equation}\label{eq6.16}
  0<\frac{\varphi(|x_j|)}{r^{n}_{j}}=\left(\frac{A\psi_j}{r^{n-2}_{j}}\right)^\lambda
 \end{equation}
 and
 \begin{equation}\label{eq6.17}
  0<\frac{\psi_j}{r^{n}_{j}} \leq \left(\frac{A\varphi(|x_j|)}{r^{n-2}_{j}}\right)^\sigma .
 \end{equation}
 Let $\psi:(0,1)\to(0,1)$ be a continuous function such that
 $\psi(|x_j|)=\psi_j$.  By Lemma \ref{lem4.1} there exist positive functions
 $u,v\in C^\infty (\mathbb{R}^n \backslash \{0\})$ such that $u$
 satisfies \eqref{eq4.4}--\eqref{eq4.7} and $v$ satisfies
 \begin{equation}\label{eq6.18}
  0\leq-\Delta v\leq\frac{\psi(|x_j|)}{r^{n}_{j}} \quad\text{ in }B_{r_j} (x_j)
 \end{equation}
 \begin{equation}\label{eq6.19}
  -\Delta v=0 \quad\text{ in }\mathbb{R}^n \backslash \left(\{0\}\cup\bigcup^{\infty}_{j=1} B_{r_j} (x_j)\right)
 \end{equation}
 \begin{equation}\label{eq6.20}
  v\geq\frac{A\psi(|x_j|)}{r^{n-2}_{j}} \quad\text{ in }B_{r_j} (x_j)
 \end{equation}
 and
 \begin{equation}\label{eq6.21}
  v\geq 1\quad\text{ in }\mathbb{R}^n \backslash \{0\}.
 \end{equation}
 Theorem \ref{thm3.4} follows from \eqref{eq4.4}--\eqref{eq4.7},
 \eqref{eq6.18}--\eqref{eq6.21}, \eqref{eq6.16}, \eqref{eq6.17},
 \eqref{eq6.14}$_1$, and \eqref{eq6.15}$_2$.
\end{proof}

\begin{proof}[Proof of Theorem \ref{thm3.6}]
  Let $u(x)$ and $v(x)$ be the Kelvin transforms of $U(y)$ and $V(y)$
  respectively.  Then
 \begin{equation}\label{eq6.22}
  U(y)=|x|^{n-2} u(x), \qquad V(y)=|x|^{n-2} v(x), \qquad x=\frac{y}{|y|^2}
 \end{equation}
 \begin{align*}
  &\Delta U=|x|^{n+2} \Delta u, \qquad \Delta V=|x|^{n+2} \Delta v\\
  &U+1=|x|^{n-2} (u+|x|^{-(n-2)} ), \qquad V+1=|x|^{n-2} (v+|x|^{-(n-2)} )
 \end{align*}
 and thus $u(x)$ and $v(x)$ are $C^2$ nonnegative solutions of the
 system (\ref{eq3.22}, \ref{eq3.23}) in a punctured neighborhood of the
 origin where
 \begin{equation}\label{eq6.23}
  \alpha=n+2-(n-2)\lambda \quad\text{ and }\quad\beta=n+2-(n-2)\sigma.
 \end{equation}
 Using Theorem \ref{thm3.7} we get the following results.
 \begin{description}
  \item[Case A.] Suppose $\sigma=0$.  Then $\beta=n+2$, 
\[
\frac{n-2}{n}\beta=\frac{(n-2)(n+2)}{n}, 
\]
and
  \begin{align*}
   \frac{n-2}{n}&\left(\frac{n-2}{n}\beta\lambda+\alpha\right) =\frac{n-2}{n}\left(\frac{(n-2)(n+2)}{n}\lambda +n+2-(n-2)\lambda\right)\\
   &=\frac{n-2}{n}\left[ \left(\frac{n+2}{n}-1\right)(n-2)\lambda+n+2\right]\\
   &=\frac{n-2}{n}\left[\frac{2(n-2)}{n}\lambda+n+2\right]\geq(n-2)\frac{n+2}{n}>n-2.
  \end{align*}
  Thus by Theorem \ref{thm3.7}(A2) we have
  \begin{align*}
   &u(x)=o\left( \left(\frac{1}{|x|}\right)^{\frac{n-2}{n}\left[\frac{2(n-2)}{n}\lambda+n+2\right]}\right) \quad\text{ as }x\to0\\
   &v(x)=o\left(
     \left(\frac{1}{|x|}\right)^{(n-2)\left(1+\frac{2}{n}\right)}
   \right) 
\quad\text{ as }x\to0.
  \end{align*}
  Hence Case A of Theorem \ref{thm3.6} follows from \eqref{eq6.22}.
  \item[Case B.] Suppose $0<\sigma<\frac{2}{n-2}$.  Then
  \begin{align*}
   &(n-2)\lambda+\alpha=n+2, \quad (n-2)\sigma+\beta=n+2\\
   &\lambda[(n-2)\sigma-2+\beta]+\alpha=\lambda n+n+2-(n-2)\lambda=n+2+2\lambda
  \end{align*}
   and
   \begin{equation*}
    \delta=\max\{n+2, \, n+2+2\lambda\}=n+2+2\lambda>n.
   \end{equation*}
   Thus by Theorem \ref{thm3.7}(B2) we have
   \begin{align*}
    &u(x)=o\left( \left(\frac{1}{|x|}\right)^{(n-2)\frac{n+2+2\lambda}{n}}\right) \quad\text{ as }x\to0\\
    &v(x)=O\left( \left(\frac{1}{|x|}\right)^n \right) \quad\text{ as }x\to0.
   \end{align*}
   Hence Case B of Theorem \ref{thm3.6} follows from \eqref{eq6.22}.
   \item[Case C.] Suppose $\sigma=\frac{2}{n-2}$.  Then
   \begin{align*}
    &\beta=n+2-(n-2)\sigma=n>n-2\\
    &\alpha=n+2-(n-2)\lambda
   \end{align*}
   and
   \begin{equation*}
    \beta\lambda+\alpha=n\lambda+n+2-(n-2)\lambda=n+2(\lambda+1)>n+2.
   \end{equation*}
   Thus by Theorem \ref{thm3.7}(C2) we have
   \begin{align*}
    &u(x)=o\left( \left(\frac{1}{|x|}\right)^{(n-2)\left(1+\frac{2(\lambda+1)}{n}\right)} \left(\log\frac{1}{|x|}\right)^{\frac{n-2}{n}\lambda} \right) \quad\text{ as }x\to0\\
    &v(x)=o\left( \left(\frac{1}{|x|}\right)^n
      \log\frac{1}{|x|}\right) \quad \text{ as }x\to0.
   \end{align*}
   Hence Case C of Theorem \ref{thm3.6} follows from \eqref{eq6.22}.
 \item[Case D.] Suppose $\sigma>\frac{2}{n-2}$.  Let $a$
   and $b$ be defined by \eqref{eq3.31}.  Then by \eqref{eq6.23}
and direct calculation (Maple is helpful), we find   

\begin{equation*}
    \frac{b}{1-a}=(n-2)\left[1+\frac{2\sigma+2}{D}\right]>n-2
   \end{equation*}
   and
   \begin{equation}\label{eq6.24} 
    \frac{b\lambda}{1-a}-(n-\alpha)=\frac{2n(\lambda+1)}{D}>0
   \end{equation}
   by \eqref{eq3.21}.  Thus neither (i) nor (ii) in Theorem
   \ref{thm3.7}(D1) hold.  Also \eqref{eq6.24} implies
   \begin{equation*}
    \frac{b\lambda}{1-a}+\alpha=n\left[1+\frac{2(\lambda+1)}{D}\right].
   \end{equation*}
   Hence by Theorem \ref{thm3.7}(D2) we have
   \begin{align*}
    &u(x)=o\left( \left(\frac{1}{|x|}\right)^{(n-2)\left(1+\frac{2(\lambda+1)}{D}+\varepsilon\right)} \right) \quad\text{ as }x\to0\\
    &v(x)=o\left( \left(\frac{1}{|x|}\right)^{(n-2)\left(1+\frac{2(\sigma+1)}{D}\right)+\varepsilon}\right) \quad\text{ as }x\to0.
   \end{align*}
   Thus Case D of Theorem \ref{thm3.6} follows from \eqref{eq6.22}.
 \end{description}
\end{proof}

\appendix
\section{Brezis-Lions result}\label{secA}

We use repeatedly the following special case of a result of Brezis 
and Lions \cite{BL}.

\begin{lem}\label{A.1}
  Suppose $u$ is a $C^2$ nonnegative superharmonic function in
  $B_{2\varepsilon} (0)\backslash \{0\}\subset \mathbb{R}^n , \, n\geq
  2$, for some $\varepsilon>0$.  Then
 \begin{equation*}
  \int_{|x|<\varepsilon} -\Delta u(x)\,dx<\infty
 \end{equation*}
 and for $0<|x|<\varepsilon$ we have
 \begin{equation*}
  u(x)=m\Gamma(|x|)+\int_{|y|<\varepsilon} \omega\Gamma(|x-y|)(-\Delta u(y))\,dy+h(x)
 \end{equation*}
 where $\Gamma$ is given by \eqref{eq1.5}, $\omega=\omega(n)>0$ and
 $m\ge 0$ are constants, $\omega(2)=\frac{1}{2\pi}$, and
 $h:B_\varepsilon (0)\to\mathbb{R}$ is harmonic.
\end{lem}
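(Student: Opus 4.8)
The plan is to recover the representation from the classical analysis of isolated point singularities, in the spirit of Brezis--Lions \cite{BL}. Write $\mu:=-\Delta u$, a continuous nonnegative function on $B_{2\varepsilon}(0)\setminus\{0\}$, and let $\overline{u}(r)$ be the spherical mean of $u$ over $|x|=r$.

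\emph{Step 1 (integrability and bounds).} The divergence theorem on the annulus $\{\rho<|x|<\varepsilon\}$ gives $\int_{\rho<|x|<\varepsilon}\mu\,dx=F(\rho)-F(\varepsilon)$ with $F(r):=\int_{|x|=r}\partial_r u\,dS=\omega_{n-1}r^{n-1}\overline{u}'(r)$. Since $\mu\ge0$, $F(\rho)$ increases as $\rho\downarrow0$; were its limit $\ell:=\lim_{\rho\to0^+}F(\rho)$ equal to $+\infty$, then integrating $\overline{u}'(r)=F(r)/(\omega_{n-1}r^{n-1})$ from $\rho$ to a fixed $r_0$ would force $\overline{u}(\rho)\to-\infty$, contradicting $u\ge0$. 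Hence $\ell$ is finite and $\int_{|x|<\varepsilon}\mu\,dx=\ell-F(\varepsilon)<\infty$; the same integration shows $\overline{u}(r)=O(\Gamma(r))$ (so $u\in L^1_{\mathrm{loc}}$ near $0$) and, inspecting the coefficient of the singular term, that $\ell\le0$.

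\emph{Step 2 (the singular part is a point mass).} With $\mu\in L^1(B_\varepsilon)$ and $u\in L^1_{\mathrm{loc}}(B_\varepsilon)$, the distribution $T:=-\Delta u-\mu$ on $B_\varepsilon$, defined by $\langle T,\phi\rangle=\int u(-\Delta\phi)-\int\mu\phi$, is supported at $\{0\}$ because $-\Delta u=\mu$ classically on the punctured ball. Writing an arbitrary $\phi\in C_c^\infty(B_\varepsilon)$ as $\phi=\phi(0)\eta+\psi$ with $\psi(0)=0$ and $\eta$ a fixed cutoff with $\eta(0)=1$, it is enough to prove $\langle T,\psi\rangle=0$ whenever $\psi(0)=0$, for then $T=c\,\delta_0$ with $c:=\langle T,\eta\rangle$. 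To do so, cut off the origin by $\zeta_\rho(x)=\zeta(x/\rho)$ ($\zeta\equiv0$ on $B_1$, $\zeta\equiv1$ off $B_2$): since $\zeta_\rho\psi\in C_c^\infty(B_\varepsilon\setminus\{0\})$ we have $\int u\,(-\Delta(\zeta_\rho\psi))=\int\mu\,\zeta_\rho\psi$, and on expanding $\Delta(\zeta_\rho\psi)$ and letting $\rho\to0$ the term $\int u(-\zeta_\rho\Delta\psi)$ and the right side converge (dominated convergence) to $\int u(-\Delta\psi)$ and $\int\mu\psi$, while the two commutator terms are bounded by $C\rho^{-1}\int_{B_{2\rho}}u$ (for the $\Delta\zeta_\rho$ term one also uses $|\psi(x)|\le C|x|$), which tends to $0$ by $\overline{u}=O(\Gamma)$. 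Hence $\langle T,\psi\rangle=0$ and $T=c\,\delta_0$.

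\emph{Step 3 (sign of $c$ and conclusion).} Testing $-\Delta u=\mu+c\,\delta_0$ against a cutoff equal to $1$ on $B_R$, $R<\varepsilon$, and using Green's identity on $B_\varepsilon\setminus\overline{B_R}$ (the boundary terms collapse to $F(R)$, since the cutoff and its gradient are $1$ and $0$ there) gives $c=-F(R)-\int_{B_R}\mu=-\ell\ge0$ by Step 1. Finally set $w(x):=\int_{|y|<\varepsilon}\omega\Gamma(|x-y|)\mu(y)\,dy$, the Newtonian potential of $\mu$ (here $\omega\Gamma(|\cdot|)$ is the fundamental solution of $-\Delta$, up to an additive harmonic constant when $n=2$), so $-\Delta w=\mu$ in $\mathcal{D}'(B_\varepsilon)$, and $m:=\omega c\ge0$, so $-\Delta(m\Gamma(|\cdot|))=c\,\delta_0$. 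Then $-\Delta(u-m\Gamma(|\cdot|)-w)=0$ in $\mathcal{D}'(B_\varepsilon)$, so by Weyl's lemma the difference is a function $h$ harmonic on $B_\varepsilon$; since $u$, $\Gamma(|\cdot|)$ and $w$ are continuous on $B_\varepsilon\setminus\{0\}$, the identity $u(x)=m\Gamma(|x|)+w(x)+h(x)$ holds pointwise for $0<|x|<\varepsilon$. The main obstacle is Step 2 --- ruling out, via the cutoff estimates, any singular behaviour at $0$ worse than a multiple of $\delta_0$; the rest is the divergence theorem, standard Newtonian potential theory, and Weyl's lemma.
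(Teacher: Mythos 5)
Your proof is correct. Note, however, that the paper itself offers no proof of Lemma \ref{A.1}: it is stated as a special case of the Brezis--Lions result and justified solely by the citation \cite{BL}. What you have written is essentially a self-contained reconstruction of that classical argument, specialized to a $C^2$ nonnegative superharmonic $u$: the spherical-mean/flux computation $F(r)=\omega_{n-1}r^{n-1}\overline{u}'(r)$ with $F$ monotone gives both $-\Delta u\in L^1(B_\varepsilon)$ and $\overline{u}(r)=O(\Gamma(r))$, hence $u\in L^1_{\mathrm{loc}}$; the cutoff/commutator estimate (using $\psi(0)=0$ to gain the factor $|x|$ against $|\Delta\zeta_\rho|\le C\rho^{-2}$, and $\rho^{-1}\int_{B_{2\rho}}u\to0$) shows the distribution $-\Delta u-\mu$ on the full ball is exactly $c\,\delta_0$, with no derivative-of-delta terms; the flux identity $c=-\lim_{\rho\to0}F(\rho)\ge0$ pins down the sign, which is precisely why $m=\omega c\ge 0$ in the statement; and the Newtonian potential plus Weyl's lemma, together with the continuity of $u$, $\Gamma$ and $w$ away from the origin, upgrade the a.e. identity to the stated pointwise one on the punctured ball. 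All the delicate points (finiteness and nonpositivity of $\ell$, vanishing of the two commutator terms, and the passage from distributional to pointwise equality) are addressed, so the argument stands as a complete proof of the lemma the paper merely quotes.
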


\end{document}